\newtheorem{theorem}{Theorem}[section]
\newtheorem{corollary}{Corollary}
\newtheorem{lemma}[theorem]{Lemma}
\newtheorem{proposition}{Proposition}
\theoremstyle{definition}
\newtheorem{definition}[theorem]{Definition}
\crefname{equation}{}{} 
\Crefname{equation}{}{} 
\title[Nonuniqueness of minimizers] 
{Nonuniqueness of minimizers for semilinear optimal control problems}
\author[Dario Pighin]{}
\keywords{Semilinear elliptic equations, nonuniqueness global minimizer, lack of convexity, optimal control.}
\email{dario.pighin@uam.es}
\thanks{This project has received funding from the European Research Council (ERC) under the European Union’s Horizon 2020 research and innovation programme (grant agreement No 694126-DYCON).\\
	We acknowledge professor Enrique Zuazua for his helpful remarks on the manuscript. We thank professor Martin Gugat for his interesting questions.}
\begin{document}
	\maketitle
	
	\centerline{\scshape Dario Pighin}
	\medskip
	{\footnotesize
		\centerline{Departamento de Matem\'aticas, Universidad Aut\'onoma de Madrid}
		\centerline{28049 Madrid, Spain}
	} 
	\medskip
	{\footnotesize
		\centerline{Chair of Computational Mathematics, Fundaci\'on Deusto}
		\centerline{University of Deusto, 48007, Bilbao, Basque Country, Spain}
	} 

	\bigskip
	



\begin{abstract}
	A counterexample to uniqueness of global minimizers of semilinear optimal control problems is given. The lack of uniqueness occurs for a special choice of the state-target in the cost functional. Our arguments show also that, for some state-targets, there exist local minimizers, which are not global. When this occurs, gradient-type algorithms may be trapped by the local minimizers, thus missing the global ones. Furthermore, the issue of convexity of quadratic functional in optimal control is analyzed in an abstract setting.\\
	As a Corollary of the nonuniqueness of the minimizers, a nonuniqueness result for a coupled elliptic system is deduced.\\
	Numerical simulations have been performed illustrating the theoretical results.\\
	We also discuss the possible impact of the multiplicity of minimizers on the turnpike property in long time horizons.
\end{abstract}

\bigskip

{\sl AMS 2010 subject classifications\rm.
	Primary 93C20; Secondary 35J47, 35J61.}
\medskip
\newline
\sl Key words and phrases. Semilinear elliptic equations, nonuniqueness global minimizer, lack of convexity, optimal control.\rm
\bigskip

\section{Introduction}
\label{sec:intro}

We produce a counterexample to the uniqueness of the optimal control in semilinear control. Both the case of internal control and boundary control are considered. To fix ideas, we focus on the case of quadratic functional and semilinear governing state equation. However, our techniques are applicable to a wide range of optimal control problems governed by a nonlinear state equation.

\subsection{Lack of uniqueness of the minimizer}
\label{subsec:lackofuniqueness}

In the context of boundary control, we consider the control problem
\begin{equation}\label{functional_nouniqboundary}
\min_{u\in L^{\infty}(\partial B(0,R))}J(u)=\frac12\int_{\partial B(0,R)} |u|^2 d\sigma(x) +\frac{\beta}{2}\int_{B(0,R)} |y-z|^2 dx,
\end{equation}
where $u=u(x)$ is the control and $y=y(x)$ is the associated state, solution to the semilinear equation
\begin{equation}\label{semilinear_boundary_elliptic_1}
\begin{dcases}
-\Delta y+f(y)=0\hspace{2.8 cm} & \mbox{in} \hspace{0.10 cm}B(0,R)\\
y=u  & \mbox{on}\hspace{0.10 cm} \partial B(0,R).
\end{dcases}
\end{equation}
The space domain $B(0,R)$ is a ball of $\mathbb{R}^n$ centered at the origin of radius $R$, with $n=1,2,3$. The nonlinearity $f\in C^1\left(\mathbb{R}\right)\cap C^2\left(\mathbb{R}\setminus \left\{0\right\}\right)$ is strictly increasing and $f(0)=0$. The target $z\in L^{\infty}(B(0,R))$ and $\beta> 0$ is a penalization parameter. As $\beta$ increases, the distance between the optimal state and the target decreases.

In \cref{sec:appendix.Preliminaries for boundary control} we analyze the well-posedness of the state equation \cref{semilinear_boundary_elliptic_1} and the existence of a global minimizer $\overline{u}\in L^{\infty}(\partial B(0,R))$ for the functional $J$ defined above. As we shall see in the following result, for a special target, the global minimizer is not unique.

\begin{figure}[tbhp]
\centering
\begin{tikzpicture}[scale=1]
\draw [shading=radial,outer color=lightgray!30,inner color=white] (0,0) circle [radius=3];
\draw [line width=1.26, blue] (0,0) circle [radius=3];
\draw [<-] (-2.1, 2.1) -- (-2.5,2.5);
\draw [-] (-2.5,2.5) -- (-3.6,2.5);
\node [left] at (-3.6,2.5) {control domain};
\draw [<-] (0.6, -0.6) -- (2.5,-2.5);
\draw [-] (2.5,-2.5) -- (3.6,-2.5);
\node [right] at (3.6,-2.5) {observation domain};
\end{tikzpicture}
\caption{control and observation domains. The control domain is the blue boundary of the ball.}
\label{fig:3_nouniq}
\end{figure}
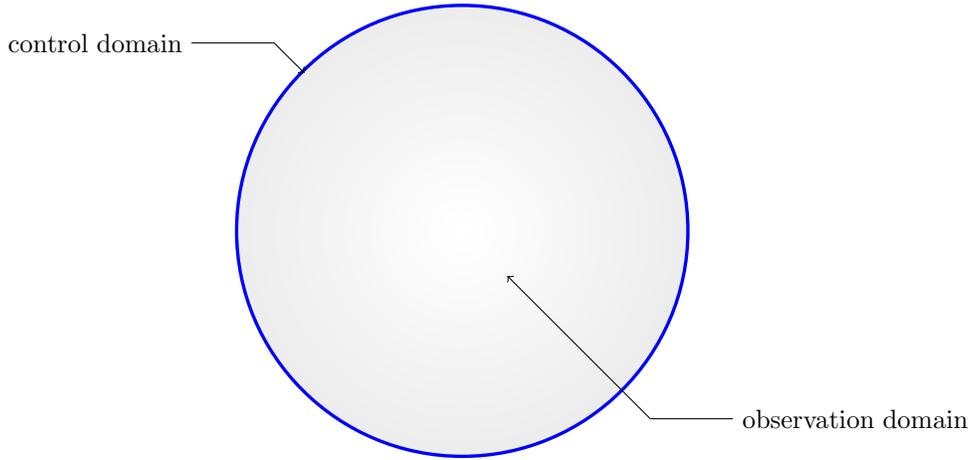

\begin{theorem}\label{th_nouniq_bound}
Consider the control problem \cref{semilinear_boundary_elliptic_1}-\cref{functional_nouniqboundary}. Assume, in addition
\begin{equation}\label{condition_boundary}
f^{\prime\prime}(y)\neq 0\hspace{1 cm}\forall \ y\neq 0.
\end{equation}
There exists a target $z\in L^{\infty}(B(0,R))$ such that the functional $J$ defined in \cref{functional_nouniqboundary} admits (at least) two global minimizers.
\end{theorem}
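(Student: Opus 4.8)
The plan is to construct a target $z$ by exploiting the symmetry of the ball together with the nonlinearity of $f$. The key idea is this: constant boundary data $u \equiv c$ produces a radially symmetric state $y_c$ solving $-\Delta y_c + f(y_c) = 0$ with $y_c = c$ on $\partial B(0,R)$. Because $f$ is strictly increasing with $f(0)=0$, the map $c \mapsto y_c$ is monotone, and by symmetry the radial profile of $y_c$ is completely determined. I would first understand the optimization problem restricted to \emph{constant} controls, where the functional collapses to a function $g(c) = \tfrac12 |\partial B(0,R)|\, c^2 + \tfrac{\beta}{2}\int_{B(0,R)} |y_c - z|^2\,dx$ of the single real variable $c$. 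The aim is to choose $z$ so that $g$ attains its global minimum at two distinct values $c = \pm a$, and then to argue that these two constant controls are in fact global minimizers of the full problem over $L^\infty(\partial B(0,R))$.

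To force two minimizers I would use an odd-symmetry trick. Suppose I could arrange that the state map has the property $y_{-c}(x) = -y_{c}(x)$; this holds precisely when $f$ is odd, since then $-y_c$ solves the equation with boundary data $-c$. If $f$ is odd and I pick $z$ to be \emph{odd} in a suitable sense (for instance, supported so that the observation integral sees an antisymmetric configuration), then $g(-c)$ and $g(c)$ become comparable, and by choosing the target to sit symmetrically ``between'' the states $y_a$ and $y_{-a}$, I can make $g(a) = g(-a)$ while ensuring this common value is the global minimum. Concretely, I expect the construction to define $z$ directly as (a multiple of, or a shift of) the average $\tfrac12(y_a + y_{-a})$ or a related antisymmetric target, tuned so that the first-order optimality condition is satisfied at both $\pm a$ and so that both points are genuine global minima rather than saddle points.

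The role of the hypothesis $f''(y) \neq 0$ for all $y \neq 0$ is the crux and is where I would focus the real work. Without nonlinearity, the problem is convex (the state depends affinely on the control) and the minimizer is unique; the strict convexity/concavity of $f$ away from the origin is what breaks convexity of the reduced functional and creates the possibility of a double-well landscape for $g$. I would compute $g'(c)$ and $g''(c)$ using the linearized state equation (differentiating $y_c$ with respect to $c$), and show that the sign condition on $f''$ forces $g$ to be nonconvex, so that a target can be selected making $g$ a genuine double well with equal minima at two symmetric points. The main obstacle I anticipate is the final step: having produced two constant controls that minimize $g$ among constants, I must rule out that some \emph{non-constant} control does strictly better. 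Here I would invoke the symmetry of the ball and a rearrangement or symmetrization argument — roughly, that for a radially symmetric target any minimizer can be taken radially symmetric on the sphere, hence constant — reducing the infinite-dimensional problem to the one-variable analysis of $g$. Verifying that this reduction is rigorous (that symmetrization does not increase the cost and that strict monotonicity of $f$ is compatible with it) is the technically delicate heart of the argument.
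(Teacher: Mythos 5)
Your overall shape (restrict to constant controls, build a target creating a double well, equalize the two wells) matches the paper's strategy, but two essential steps of your plan would fail as stated.

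First, the reduction to constant controls. You propose a symmetrization/rearrangement argument: for a radial target, any minimizer "can be taken" radially symmetric, hence constant. This is precisely what cannot work here: the functional is \emph{not convex} (that is the whole point of the paper), so averaging a minimizer over the rotation group need not decrease the cost, and Schwarz-type rearrangement does not interact well with the semilinear state constraint. The paper sidesteps this with a dichotomy rather than a reduction: if for some radial target an optimal control $u$ is \emph{not} constant, then there is an orthogonal matrix $M$ with $u\circ M\neq u$ (\Cref{lemma_noncostcontrol}), and rotational invariance of the ball and of the equation gives $I(u\circ M,z)=I(u,z)$ — so $u$ and $u\circ M$ are already two distinct global minimizers and the theorem is proved on the spot. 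Only in the remaining case, where every optimal control for every radial target is constant, does the one-variable analysis proceed; and there, global optimality among constants \emph{is} global optimality, by the standing case assumption. You never need to prove that minimizers are constant, which is fortunate because nothing in your toolkit proves it.

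Second, the construction of the target. Your equalization of the two wells rests on the odd-symmetry identity $y_{-c}=-y_c$, which as you yourself note holds precisely when $f$ is odd — but the theorem assumes only $f\in C^1(\mathbb{R})\cap C^2(\mathbb{R}\setminus\{0\})$, strictly increasing, $f(0)=0$, $f''\neq 0$ off the origin; oddness is not available. The paper's replacement has two parts. (i) The hypothesis $f''\neq 0$ is used not to show nonconvexity of the reduced function $g$ directly, but to prove (\Cref{lemma_rank_matrix_boundary}) that sets of the form $\left\{G(u_{+,2})=\lambda\, G(u_{+,1})\right\}$ have measure zero, whence a $2\times 2$ matrix of integrals of states over two complementary regions $\omega_1,\omega_2$ is invertible; solving the resulting linear system produces a sign-changing step target $z^0$ with $I(u_-,z^0)<0$ and $I(u_+,z^0)<0$, i.e.\ a strictly negative local minimum on each half-line of constants (recall $I(0,z^0)=0$). (ii) The two minima are then made \emph{equal} not by symmetry but by an intermediate-value argument (\Cref{lemma4_boundary}): shifting the target by constants $\mu$ moves the two infima $h_1,h_2$ continuously and in opposite directions, so some shift $\tilde z=z^0+\mu_1$ achieves $h_1(\tilde z)=h_2(\tilde z)<0$, giving one global minimizer among negative constants and one among positive constants. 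Your proposal, restricted to odd $f$ and with the symmetrization step repaired by the dichotomy above, could be completed; as written, it proves a weaker statement by an argument whose key reduction is unjustified.
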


To give a first explanation of the above result, we introduce the control-to-state map
\begin{equation}\label{control_to_state_PDE_boundary_intro}
G:L^{\infty}(\partial B(0,R))\longrightarrow L^2(B(0,R))
\end{equation}
\begin{equation*}
u\longmapsto y_u,
\end{equation*}
with $y_u$ solution to \cref{semilinear_boundary_elliptic_1}, with control $u$. Then, for any control $u\in L^{\infty}(\partial B(0,R))$, the functional \cref{functional_nouniqboundary} reads as
\begin{equation}\label{functional_nouniqboundary_representation}
J(u) = \frac12\int_{\partial B(0,R)} \left|u\right|^2 d\sigma(x) +\frac{\beta}{2}\int_{B(0,R)} \left|G(u)-z\right|^2 dx.
\end{equation}
We have two addenda. The first one is convex, being a squared norm. The second one is a squared norm composed with $u\longmapsto G(u)-z$. Now, under the assumption \cref{condition_boundary}, the map $u\longmapsto G(u)$ is nonlinear. Then, the term $\int_{B(0,R)} \left|G(u)-z\right|^2 dx$, for a special target $z$, is not convex and generates the lack of uniqueness of the minimizers.

The proof of \Cref{th_nouniq_bound} can be found in \cref{subsec:nouniq.boundary}. The main steps for that proof are:
\begin{enumerate}
\item[Step 1] \textbf{Reduction to constant controls}: by choosing radial targets and using the rotational invariance of $B(0,R)$, we reduce to the case the control set is made of constant controls;
\item[Step 2] \textbf{Existence of two local minimizers}: we look for a target such that there exists two \textit{local} minimizers ($u_1<0$ and $u_2>0$)  for the functional $J$ (see \cref{fig:functionalnouniqmerged});
\item[Step 3] \textbf{Existence of two global minimizers}: by the former step and a bisection argument, we prove the existence of a target such that $J$ admits two \textit{global} minimizers.
\end{enumerate}

\begin{figure}
\centering
\begin{subfigure}{.5\textwidth}
	\centering
	\includegraphics[width=\textwidth]{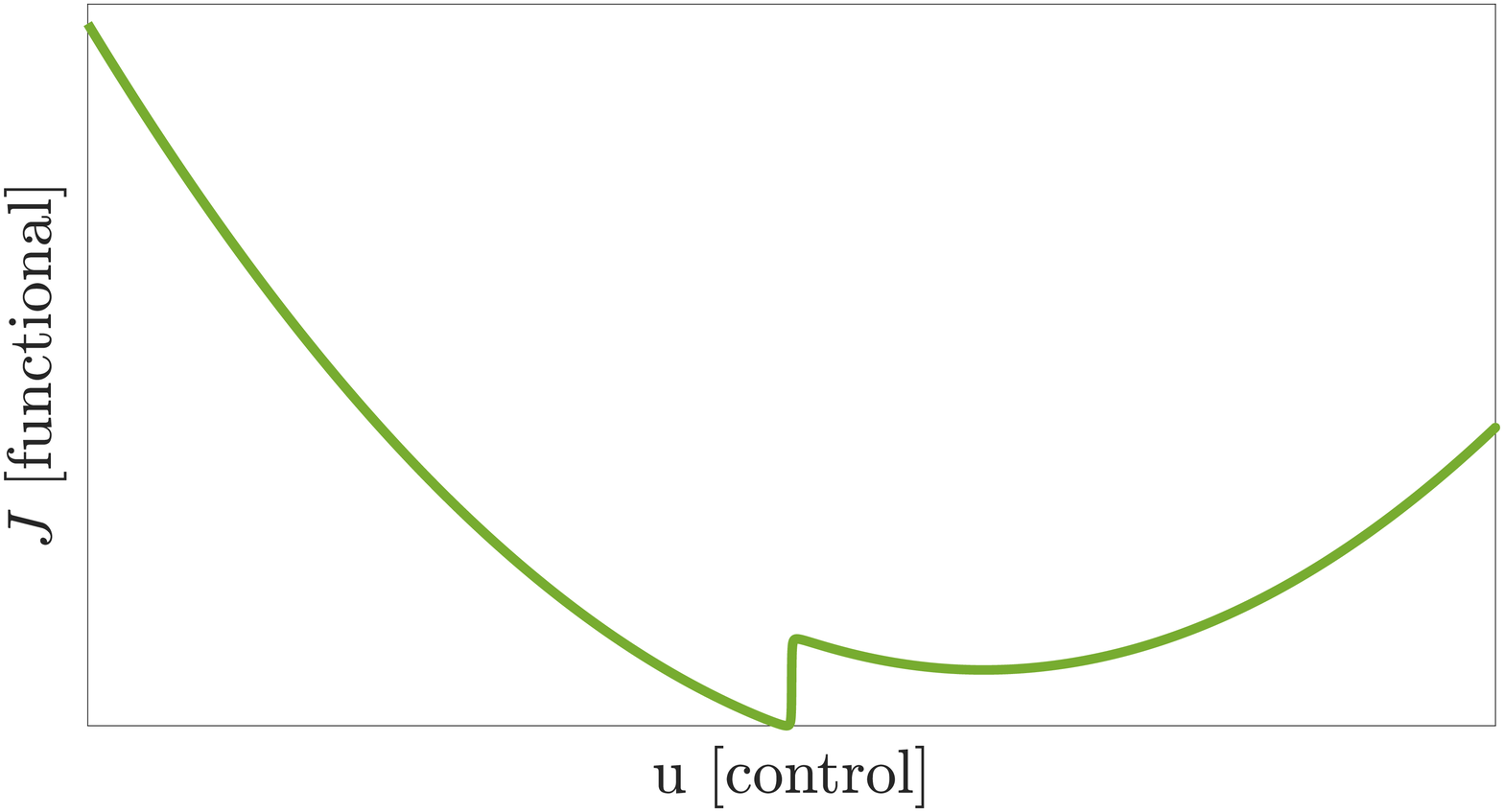}
	\caption{nonuniqueness of the \textit{local} minimizer}
	\label{fig:sub1}
\end{subfigure}%
\begin{subfigure}{.5\textwidth}
	\centering
	\includegraphics[width=\textwidth]{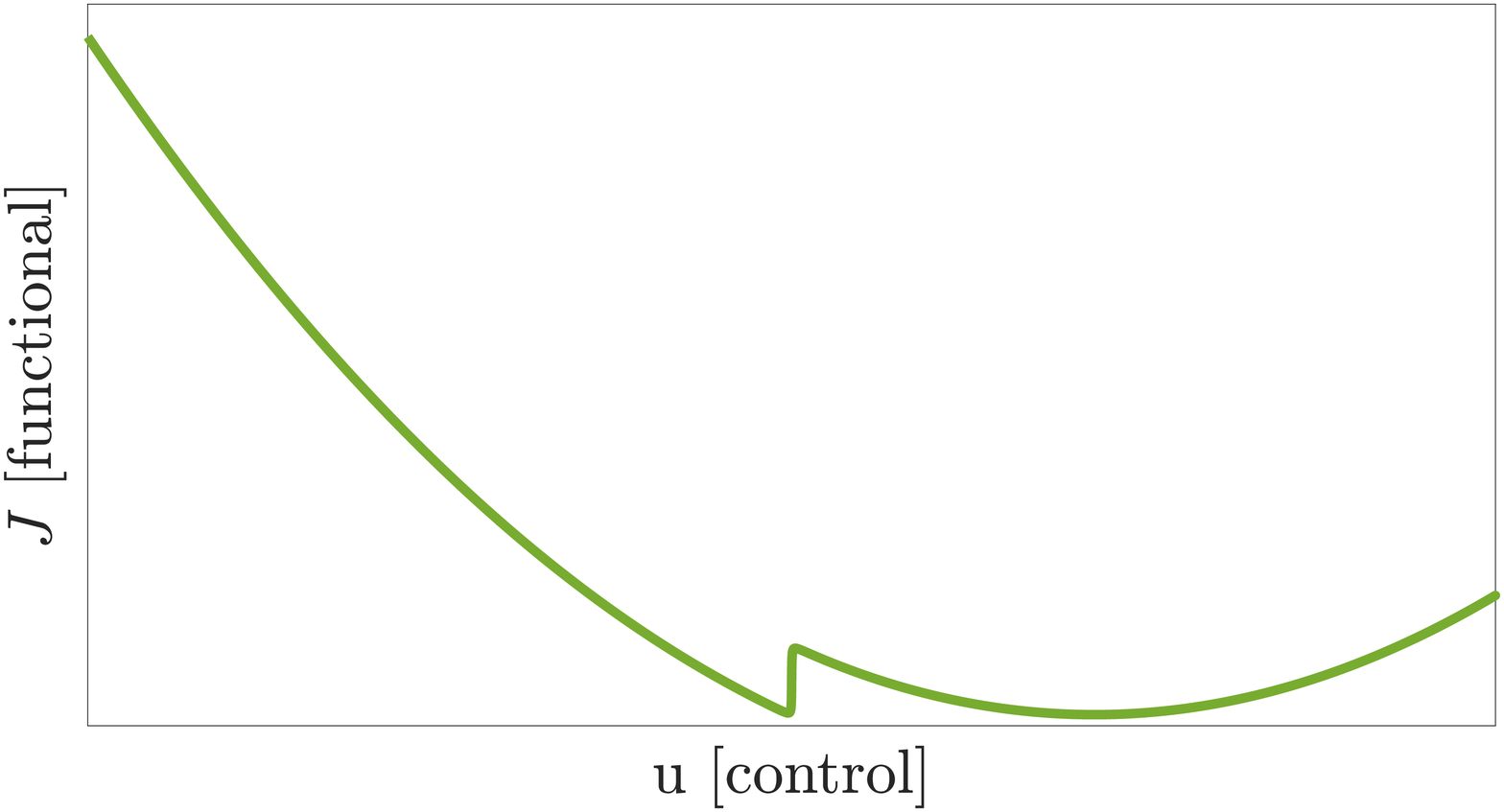}
	\caption{nonuniqueness of the \textit{global} minimizer}
	\label{fig:sub2}
\end{subfigure}
\caption{functional versus control. This plot is obtained by drawing in MATLAB the graph of $J$ defined in \cref{functional_nouniqboundary}, with $R=1$ and nonlinearity $f(y)=y^3$. \Cref{fig:sub1} and \cref{fig:sub2} correspond respectively to targets yielding to nonuniqueness of the local and the global minimizers.}\label{fig:functionalnouniqmerged}
\end{figure}

The special target yielding nonuniqueness is a step function changing sign in the observation domain, as in \cref{fig:target_nouniq}.

\begin{figure}[tbhp]
\centering
\includegraphics[width=12cm]{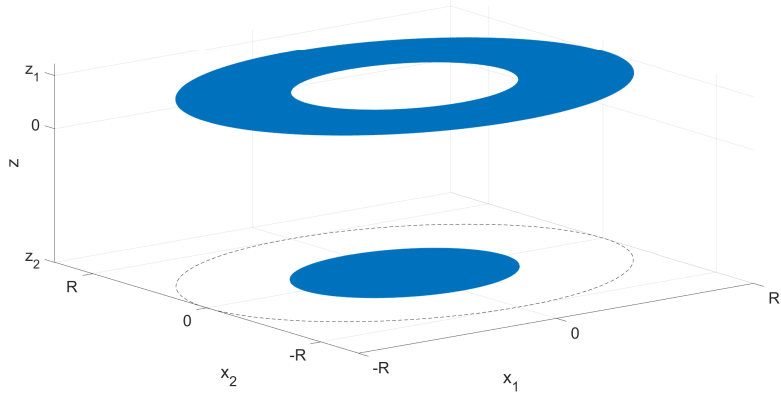}
\caption{target yielding nonuniqueness in boundary control. The constructed target $z$ (in blue) is a step function, taking values $z_1$ and $z_2$.}
\label{fig:target_nouniq}
\end{figure}

The above techniques can be applied, with some modifications, to the internal control problem
\begin{equation}\label{functionalinternal}
\min_{u\in L^2(B(0,r))}J(u)=\frac12\int_{B(0,r)} |u|^2 dx +\frac{\beta}{2}\int_{B(0,R)\setminus B(0,r)} |y-z|^2 dx,
\end{equation}
where
\begin{equation}\label{semilinear_internal_elliptic_1}
\begin{dcases}
-\Delta y+f(y)=u\chi_{B(0,r)}\hspace{2.8 cm} & \mbox{in} \hspace{0.10 cm}B(0,R)\\
y=0  & \mbox{on}\hspace{0.10 cm} \partial B(0,R).
\end{dcases}
\end{equation}
$B(0,R)$ denotes a ball of $\mathbb{R}^n$ centered at the origin of radius $R$, $n =1,2,3$. The nonlinearity $f\in C^1\left(\mathbb{R}\right)\cap C^2\left(\mathbb{R}\setminus \left\{0\right\}\right)$ is strictly increasing and $f(0)=0$. The control acts in $B(0,r)$, with $r\in (0,R)$. We observe in $B(0,R)\setminus B(0,r)$ (see \cref{fig:6}). The target $z\in L^2(B(0,R)\setminus B(0,r))$, while $\beta> 0$ is a penalization parameter.

The well-posedness of the state equation follows from \cite[Theorem 4.7, page 29]{boccardo2013elliptic}, while the existence of a global minimizer in $L^2(B(0,r))$ for \cref{semilinear_internal_elliptic_1}-\cref{functionalinternal} can be shown by the Direct Method of the Calculus of Variations (DMCV).

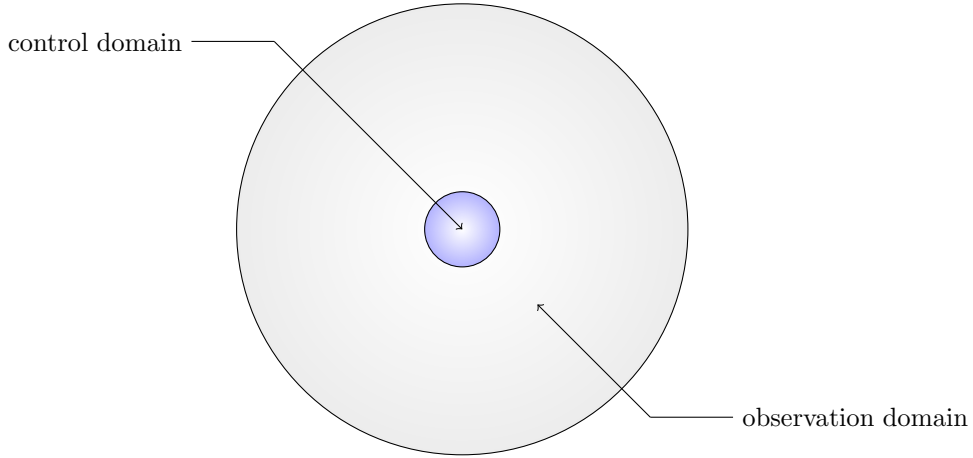
\begin{figure}[tbhp]
\centering
\begin{tikzpicture}[scale=1]
\draw [shading=radial,outer color=lightgray!30,inner color=white] (0,0) circle [radius=3];
\draw [shading=radial,outer color=blue!30,inner color=white] (0,0) circle [radius=0.5];
\draw [<-] (0, 0) -- (-2.5,2.5);
\draw [-] (-2.5,2.5) -- (-3.6,2.5);
\node [left] at (-3.6,2.5) {control domain};
\draw [<-] (1, -1) -- (2.5,-2.5);
\draw [-] (2.5,-2.5) -- (3.6,-2.5);
\node [right] at (3.6,-2.5) {observation domain};
\end{tikzpicture}
\caption{control and observation domains}
\label{fig:6}
\end{figure}

\begin{theorem}\label{th_nouniq_int}
Consider the control problem \cref{semilinear_internal_elliptic_1}-\cref{functionalinternal}. Assume, in addition,
\begin{equation}\label{conditioninternal}
f^{\prime\prime}(y)\neq 0\hspace{1 cm}\forall \ y\neq 0.
\end{equation}
There exists a target $z\in L^{\infty}(B(0,R)\setminus B(0,r))$ such that the functional $J$ defined in \cref{functionalinternal} admits (at least) two global minimizers.
\end{theorem}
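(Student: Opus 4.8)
The plan is to follow, step by step, the three-part strategy already used for \Cref{th_nouniq_bound}, adapting each ingredient to the annular geometry of the internal problem. As in the boundary case, the starting point is symmetry: I would take the target $z$ to be radial and exploit the rotational invariance of $B(0,R)$, of the control ball $B(0,r)$ and of the observation annulus $B(0,R)\setminus B(0,r)$, in order to reduce the analysis to radial controls $u=u(|x|)$, whose associated state $y_u$ is radial as well.

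The feature that replaces the boundary-trace reasoning is that the control is now supported in $B(0,r)$, so on the observation annulus the state solves the \emph{homogeneous} equation $-\Delta y+f(y)=0$ with $y=0$ on $\partial B(0,R)$ and $y=\phi$ on the interface $\partial B(0,r)$, where $\phi:=y_u|_{\partial B(0,r)}$. By uniqueness for the semilinear Dirichlet problem (here $f$ is increasing), the observation term $\frac{\beta}{2}\int_{B(0,R)\setminus B(0,r)}|y_u-z|^2$ depends on $u$ only through $\phi$, and in the radial class $\phi$ collapses to the single scalar $a:=y_u(r)$, with the annular state being the unique radial solution $y_a$ of the ODE satisfying $y_a(r)=a$, $y_a(R)=0$. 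I would therefore introduce the reduced one-dimensional functional
\begin{equation*}
j(a)=C(a)+\frac{\beta}{2}\int_{B(0,R)\setminus B(0,r)}|y_a-z|^2\,dx,
\end{equation*}
where $C(a)$ is the minimal control cost $\frac12\int_{B(0,r)}|u|^2$ among radial controls producing interface value $a$. The map $a\mapsto y_a$ is nonlinear precisely because $f$ is, and this is the source of the lack of convexity.

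For Step 2 I would construct a radial step-function target $z$, taking two values $z_1,z_2$ on two concentric sub-annuli separated by an intermediate radius, and arrange that $j$ acquires two local minima at some $a_1<0<a_2$. The hypothesis $f''\neq 0$ for $y\neq 0$ (\cref{conditioninternal}) is exactly what makes $a\mapsto y_a$ genuinely nonlinear and lets me force a loss of convexity of $j$ on an intermediate interval, producing the double-well profile. For Step 3 I would let one parameter of the target vary (for instance the separating radius, or one of the heights $z_1,z_2$), follow the two competing values $j(a_1)$ and $j(a_2)$, which depend continuously on this parameter and exchange their order between the two extremes, and conclude by a bisection/intermediate-value argument that there is a choice for which $j(a_1)=j(a_2)=\min j$, i.e.\ two radial minimizers of the reduced functional.

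The main obstacle is Step 1: upgrading ``global minimizer within the radial class'' to ``global minimizer of $J$ over all of $L^2(B(0,r))$'', because the nonconvexity we deliberately build in also leaves room for non-radial competitors. To handle it I would spherically average a given control, which does not increase the control cost by Jensen's inequality, and which decreases the annular observation term when comparison is made at the level of the averaged state; the difficulty is that the spherical average of the state of an interior control no longer solves the homogeneous equation on the annulus (the Jensen gap $f(\bar y)-\overline{f(y)}$ acts as a source there), so the averaged state is not admissible. I would close this gap using the comparison principle for $-\Delta+f$ together with the fixed sign of $f''$, carrying out the estimate for the specific constructed target rather than for an arbitrary one. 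Once radial optimality is secured, the two radial minimizers produced in Step 3 are global minimizers of $J$, which yields the desired target and proves the theorem.
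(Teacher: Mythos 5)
There is a genuine gap, and it sits exactly where you placed your ``main obstacle'': the upgrade from minimizers in the radial class to global minimizers of $J$ over all of $L^2(B(0,r))$. Your proposed repair --- spherical averaging plus ``the comparison principle for $-\Delta+f$ together with the fixed sign of $f''$'' --- does not work. First, under hypothesis \cref{conditioninternal} the sign of $f''$ is \emph{not} fixed: the assumption is only $f''(y)\neq 0$ for $y\neq 0$, and the model case $f(y)=y^3$ has $f''>0$ for $y>0$ and $f''<0$ for $y<0$; since the constructed minimizers have states of opposite signs, the Jensen gap $f(\bar y)-\overline{f(y)}$ has no consistent direction, so no one-sided comparison argument can close the loop. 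Second, even granting a sign, averaging the state does not relate $\int|y-z|^2$ to the observation cost of an \emph{admissible} competitor, which is the quantity you need to dominate. The paper avoids this entire issue with a dichotomy that your proposal is missing: if for some radial target $z$ a global minimizer $u$ is \emph{not} radial, then there is an orthogonal matrix $M$ with $u\circ M\neq u$, and since $G(u\circ M)=G(u)\circ M$ (\Cref{lemma_rot_int}) a change of variables gives $I(u\circ M,z)=I(u,z)$, so $u$ and $u\circ M$ are already two distinct global minimizers and the theorem is proved; in the remaining case every optimal control for every radial target is radial, and then the two radial minimizers produced by the construction are automatically global. No symmetrization inequality is ever needed. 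Without this observation (or a correct substitute), your Steps 2--3 only yield two global minimizers \emph{within the radial class}, which does not prove the statement.

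A secondary, less fundamental gap is in your Step 2: you assert that \cref{conditioninternal} ``lets me force'' a double-well profile for the reduced functional $j(a)$, but the mechanism is never given, and this is where the hypothesis actually does real work. In the paper it enters through \Cref{lemma_rank_matrix_internal}: for constant controls $u_-<0<u_{+}$, the condition $f''\neq 0$ forces the level sets $\{G(u_{+,2})=\lambda G(u_{+,1})\}$ to have measure zero, which makes a $2\times 2$ matrix of integrals of states over $\omega_1,\omega_2$ invertible; solvability of the resulting linear system produces a two-valued step target $z^0$ with $I(u_-,z^0)<0$ and $I(u_+,z^0)<0$, and the splitting of radial controls by the sign of $G(u)\restriction_{\partial B(0,r)}$ (controls with vanishing interface value have $I\geq 0$) then gives the two local minimizers. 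Your reduction to the interface value $a=y_u(r)$ is a sensible repackaging of the same structure --- on the annulus the state indeed depends on $u$ only through its interface trace --- and your Step 3 bisection matches the paper's \Cref{lemma4_internal}; but without an analogue of the matrix lemma, the existence of the two wells is asserted rather than proved.
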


The proof can be found in \cref{subsec:nouniq.internal}.

A by-product of our nonuniqueness results is the lack of uniqueness of solutions $\left(\overline{y},\overline{q}\right)$ to the optimality system
\begin{equation}\label{semilinear_internal_elliptic_2_nouniq}
\begin{dcases}
-\Delta \overline{y}+f(\overline{y})=-\overline{q}\chi_{B(0,r)}\hspace{2.8 cm} & \mbox{in} \hspace{0.10 cm}B(0,R)\\
\overline{y}=0  & \mbox{on}\hspace{0.10 cm} \partial B(0,R)\\
-\Delta \overline{q}+f^{\prime}(\overline{y})\overline{q}=\beta(\overline{y}-z)\chi_{B(0,R)\setminus B(0,r)}\hspace{2.8 cm} & \mbox{in} \hspace{0.10 cm}B(0,R)\\
\overline{q}=0  & \mbox{on}\hspace{0.10 cm} \partial B(0,R).
\end{dcases}
\end{equation}
In the case of internal control, we can deduce the following corollary.

\begin{corollary}\label{cor_nouniq_OS}
Under the assumptions of \Cref{th_nouniq_int}, there exists a target $z\in L^{\infty}(B(0,R)\setminus B(0,r))$, such that \cref{semilinear_internal_elliptic_2_nouniq} admits (at least) two distinguished solutions $\left(\overline{y}_1,\overline{q}_1\right)$ and $\left(\overline{y}_2,\overline{q}_2\right)$.
\end{corollary}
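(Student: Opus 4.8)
The plan is to deduce Corollary \ref{cor_nouniq_OS} directly from Theorem \ref{th_nouniq_int} by reading the optimality system \cref{semilinear_internal_elliptic_2_nouniq} as the first-order necessary conditions satisfied by global minimizers of $J$. First I would invoke Theorem \ref{th_nouniq_int} to obtain a target $z\in L^{\infty}(B(0,R)\setminus B(0,r))$ for which $J$ has two distinct global minimizers $\overline{u}_1\neq\overline{u}_2$ in $L^2(B(0,r))$. Writing $\overline{y}_i=y_{\overline{u}_i}$ for the associated states, solutions to \cref{semilinear_internal_elliptic_1}, and introducing the adjoint states $\overline{q}_i$ as solutions to the linear equation
\begin{equation*}
-\Delta \overline{q}_i+f^{\prime}(\overline{y}_i)\overline{q}_i=\beta(\overline{y}_i-z)\chi_{B(0,R)\setminus B(0,r)}\quad\text{in }B(0,R),\qquad \overline{q}_i=0\ \text{on }\partial B(0,R),
\end{equation*}
I would then pass to the Euler--Lagrange characterization. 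Since $f^{\prime}\geq 0$ (because $f$ is increasing), the adjoint operator $-\Delta+f^{\prime}(\overline{y}_i)$ is coercive, so each $\overline{q}_i$ exists and is unique. The two pairs $(\overline{y}_1,\overline{q}_1)$ and $(\overline{y}_2,\overline{q}_2)$ then solve \cref{semilinear_internal_elliptic_2_nouniq}, provided I verify that the optimality condition forces $\overline{u}_i=-\overline{q}_i$ on $B(0,r)$, which is exactly what turns the state equation \cref{semilinear_internal_elliptic_1} into the first line of \cref{semilinear_internal_elliptic_2_nouniq}.

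The core computation is the derivation of this first-order condition. I would compute the Gâteaux derivative of $J$ at a minimizer $\overline{u}_i$ in an arbitrary direction $h\in L^2(B(0,r))$. Differentiating the control-to-state map yields the linearized state $\delta y$ solving $-\Delta(\delta y)+f^{\prime}(\overline{y}_i)\delta y=h\chi_{B(0,r)}$ with homogeneous boundary data. Hence
\begin{equation*}
J^{\prime}(\overline{u}_i)h=\int_{B(0,r)}\overline{u}_i\,h\,dx+\beta\int_{B(0,R)\setminus B(0,r)}(\overline{y}_i-z)\,\delta y\,dx.
\end{equation*}
Testing the adjoint equation against $\delta y$ and the linearized equation against $\overline{q}_i$, an integration by parts (the boundary terms vanishing since both $\delta y$ and $\overline{q}_i$ vanish on $\partial B(0,R)$) gives $\beta\int(\overline{y}_i-z)\,\delta y = \int_{B(0,r)}\overline{q}_i\,h$, so that $J^{\prime}(\overline{u}_i)h=\int_{B(0,r)}(\overline{u}_i+\overline{q}_i)h\,dx$. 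Since $\overline{u}_i$ is a global minimizer and the admissible set $L^2(B(0,r))$ is the whole space, $J^{\prime}(\overline{u}_i)=0$, whence $\overline{u}_i=-\overline{q}_i$ on $B(0,r)$. Substituting into \cref{semilinear_internal_elliptic_1} produces precisely the first two lines of \cref{semilinear_internal_elliptic_2_nouniq}.

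Finally I would confirm that the two solutions are genuinely distinct. This is where the only real subtlety lies: distinctness of the controls $\overline{u}_1\neq\overline{u}_2$ must be transferred to distinctness of the pairs $(\overline{y}_i,\overline{q}_i)$. From $\overline{u}_i=-\overline{q}_i\big|_{B(0,r)}$ it follows immediately that $\overline{q}_1\not\equiv\overline{q}_2$ on $B(0,r)$, since otherwise the controls would coincide; therefore $(\overline{y}_1,\overline{q}_1)\neq(\overline{y}_2,\overline{q}_2)$ as elements of the solution space, and the two solutions are distinguished. I expect the main obstacle to be purely technical rather than conceptual, namely justifying the differentiability of the control-to-state map $G$ and the attendant integration by parts in the weak (rather than classical) sense; these require the regularity of $\overline{y}_i$ and $\overline{q}_i$ that follows from elliptic estimates for \cref{semilinear_internal_elliptic_1} already established in the well-posedness discussion, so no new difficulty should arise. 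The conceptual content is entirely inherited from Theorem \ref{th_nouniq_int}: once two distinct global minimizers exist, the optimality system automatically inherits their multiplicity.
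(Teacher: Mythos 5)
Your proposal is correct and follows essentially the same route as the paper: the paper deduces the corollary from \Cref{th_nouniq_int} together with the first-order optimality conditions for \cref{semilinear_internal_elliptic_1}-\cref{functionalinternal}, citing \cite{ESC} for the details of the adjoint-based Euler--Lagrange characterization $\overline{u}_i=-\overline{q}_i$ on $B(0,r)$ that you derive explicitly. Your additional observation that distinctness of the controls transfers to distinctness of the pairs via $\overline{u}_i=-\overline{q}_i\restriction_{B(0,r)}$ is exactly the (implicit) final step of the paper's argument.
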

This follows from \Cref{th_nouniq_int}, together with the first order optimality conditions for the optimization problem \cref{semilinear_internal_elliptic_1}-\cref{functionalinternal} (see \cite{ESC}).

Similarly, in the context of boundary control, the nonuniqueness for \cref{functional_nouniqboundary} leads to nonuniquness of solution to the optimality system
\begin{equation}\label{semilinear_boundary_elliptic_2_nouniq}
\begin{dcases}
-\Delta \overline{y}+f(\overline{y})=0\hspace{2.8 cm} & \mbox{in} \hspace{0.10 cm}B(0,R)\\
\overline{y}=\frac{\partial}{\partial n}\overline{q}  & \mbox{on}\hspace{0.10 cm} \partial B(0,R)\\
-\Delta \overline{q}+f^{\prime}(\overline{y})\overline{q}=\beta(\overline{y}-z)\hspace{2.8 cm} & \mbox{in} \hspace{0.10 cm}B(0,R)\\
\overline{q}=0  & \mbox{on}\hspace{0.10 cm} \partial B(0,R).
\end{dcases}
\end{equation}

To the best of our knowledge, the issue of the uniqueness of the minimizer has not been addressed so far for large targets $z$. Indeed, the uniqueness of the optimal control has been proved under smallness conditions on the target \cite[subsection 3.2]{PZ2} or on the adjoint state \cite[Theorem 3.2]{ali2016global}. In particular, in \cite[Theorem 3.2]{ali2016global} the uniqueness holds provided that the adjoint state is strictly smaller than a constant, explicitly determined \cite[equation (3.6)]{ali2016global}.

The issue of uniqueness of the minimizer for elliptic problems is of primary importance when studying the turnpike property for the corresponding time-evolution control problem (see, 
\cite{trelat2015turnpike,PZ2,trelat2018steady,TGS}). Indeed, the existence of multiple global minimizers for the steady problem generates multiple potential attractors for the time-evolution problem.

The control problems we are treating are classical in the literature. General surveys on the topic are \cite{ESC} by Eduardo Casas and Mariano Mateos and \cite[Chapter 4]{troltzsch2010optimal} by Fredi Tr{\"o}ltzsch. The interested reader is refereed also to the following articles and books and the references therein: \cite{casas2014optimal,BNC,barbu2010nonlinear,neittaanmaki2007optimization,casas1993boundary,alibert1997boundary,haller2009holder,casas1986control,JOC,dontchev1995optimality,rosch2007regularity}.

\subsection{Lack of convexity}
\label{subsec:lackofconvexity}

Before proving our main result on nonuniqueness of global minimizers, we observe that, for some targets, quadratic functionals of the optimal control governed by nonlinear state equations are not convex.
\begin{theorem}\label{th_lackconv_intcontr}
Consider the optimal control problem introduced in \cref{semilinear_internal_elliptic_1}-\cref{functionalinternal}. Then, we have two possibilities:
\begin{enumerate}
	\item $f$ is linear. Then, $J$ is convex for any target $z\in L^2\left(B(0,R)\setminus B(0,r)\right)$.
	\item $f$ is not linear. Then, there exists a target $z\in L^2\left(B(0,R)\setminus B(0,r)\right)$ such that the corresponding $J$ is not convex.
\end{enumerate}
\end{theorem}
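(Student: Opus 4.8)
The plan is to study the second variation of $J$ and argue separately along the dichotomy in the statement.

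For the first alternative, suppose $f$ is linear. Since $f(0)=0$ and $f$ is increasing we then have $f(y)=\kappa y$ with $\kappa\ge 0$, so the state equation \cref{semilinear_internal_elliptic_1} is linear and its control-to-state map $G$, $u\mapsto y_u$, satisfies $G(0)=0$; hence $G$ is linear. Writing
\begin{equation*}
J(u)=\tfrac12\|u\|_{L^2(B(0,r))}^2+\tfrac\beta2\|G(u)-z\|_{L^2(B(0,R)\setminus B(0,r))}^2,
\end{equation*}
the first term is convex and the second is a squared norm precomposed with the affine map $u\mapsto G(u)-z$, hence convex, for every target $z$. This case is routine.

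For the second alternative I would produce a direction of negative curvature. Using the standard sensitivity calculus, set $\eta:=G'(u)h$ and $\zeta:=G''(u)[h,h]$, which solve
\begin{align*}
-\Delta\eta+f'(y_u)\eta&=h\chi_{B(0,r)}, & \eta|_{\partial B(0,R)}&=0,\\
-\Delta\zeta+f'(y_u)\zeta&=-f''(y_u)\eta^2, & \zeta|_{\partial B(0,R)}&=0,
\end{align*}
so that
\begin{equation*}
J''(u)[h,h]=\int_{B(0,r)}h^2\,dx+\beta\int_{B(0,R)\setminus B(0,r)}\eta^2\,dx+\beta\int_{B(0,R)\setminus B(0,r)}(y_u-z)\,\zeta\,dx.
\end{equation*}
The first two terms are nonnegative, so everything hinges on the last one. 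If I can exhibit a control $u$ and a direction $h$ with $\zeta\not\equiv 0$ on the observation domain, then choosing $z:=y_u+M\zeta$ (which lies in $L^\infty(B(0,R)\setminus B(0,r))$ by elliptic regularity) turns the last term into $-\beta M\|\zeta\|_{L^2(B(0,R)\setminus B(0,r))}^2$; for $M$ large this dominates the first two terms, giving $J''(u)[h,h]<0$ and hence nonconvexity of $J$ for this $z$. Thus the task reduces to guaranteeing $\zeta\not\equiv 0$ on $B(0,R)\setminus B(0,r)$.

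This last point is where the nonlinearity enters, and it is the heart of the proof. Since $f$ is not linear and $f\in C^1(\mathbb R)\cap C^2(\mathbb R\setminus\{0\})$, its derivative $f'$ is continuous and nonconstant, so $f''\ne0$ at some $y_*\ne0$, and by continuity $f''\ne0$ on an interval $(a,b)\subset\mathbb R\setminus\{0\}$ around $y_*$ (take $y_*>0$; the case $y_*<0$ is symmetric). I would then use a constant control $u\equiv c>0$: by rotational invariance $y_u$ is radial, vanishes on $\partial B(0,R)$, is monotone in $|x|$ on $B(0,R)\setminus B(0,r)$, and its value on $\partial B(0,r)$ depends continuously on $c$ and is unbounded as $c\to+\infty$ (a comparison estimate). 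Hence for a suitable $c$ the radial profile meets $(a,b)$, and $S:=\{x\in B(0,R)\setminus B(0,r):y_u(x)\in(a,b)\}$ is a nonempty annular set of positive measure on which $f''(y_u)\ne0$. Choosing $h=\chi_{B(0,r)}\ge0$, the strong maximum principle for $-\Delta+f'(y_u)$ (recall $f'\ge0$) gives $\eta>0$ in $B(0,R)$. If now $\zeta\equiv0$ on the open annulus, the second equation forces $f''(y_u)\eta^2=0$ a.e. there, hence $\eta=0$ on $S$, contradicting $\eta>0$; therefore $\zeta\not\equiv0$ on the observation domain, and the construction closes.

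I expect the main obstacle to be twofold, both located in this last step. First, one must justify that an admissible control drives the state into the range $(a,b)$ on a positive-measure subset of the observation domain, which requires the continuity and unboundedness in $c$ of the trace $y_u|_{\partial B(0,r)}$. Second, and more delicate, the second-order expansion of $J$ must be made rigorous even though $f$ is only $C^2$ away from $0$ while $y_u$ vanishes on $\partial B(0,R)$: the possible singularity of $f''$ at the origin has to be controlled near the outer boundary (where $y_u$ and $\eta$ vanish simultaneously), so that $f''(y_u)\eta^2$ is integrable and the formula for $J''(u)[h,h]$ is valid. The algebraic computation of the second variation and the linear case are otherwise straightforward.
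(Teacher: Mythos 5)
Your linear case is fine, and in a genuinely smooth setting your negative-curvature construction would work; indeed it refines the heuristic sketch the paper itself gives in \cref{subsec:lackofconvexity} (where it computes $\langle d^2J(u_1)v_1,v_1\rangle$ and takes $z^k=kD^2G(u_1)(v_1,v_1)$). But as a proof of \Cref{th_lackconv_intcontr} it has a genuine gap, and it sits exactly where you flagged ``the main obstacle''. Under the hypotheses actually in force, the nonlinearity is only $f\in C^1(\mathbb{R})\cap C^2(\mathbb{R}\setminus\{0\})$ and nothing bounds $f''$ near $0$. Every admissible state is continuous and vanishes on $\partial B(0,R)$, so $0$ always lies in the range of $y_u$; consequently the Nemytskii operator $y\mapsto f(y)$ need not be twice differentiable at $y_u$, the control-to-state map $G$ need not possess a second derivative, and the formula you write for $J''(u)[h,h]$ can be meaningless. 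Concretely, continuity of $f'$ at $0$ is compatible with $f''$ blowing up arbitrarily fast as $y\to 0$ (say like $e^{1/|y|}$ along intervals accumulating at $0$), while by Hopf's lemma $y_u$ and $\eta$ vanish only \emph{linearly} in $d(x)=\operatorname{dist}(x,\partial B(0,R))$; then the source term $f''(y_u)\eta^2$ in your equation for $\zeta$ need not even be locally integrable near $\partial B(0,R)$, so $\zeta$ is not defined and no choice of $M$ rescues the argument. You identified this difficulty but did not resolve it, and it is not a removable technicality: it is precisely the difference between the theorem's hypotheses and the $C^2$ framework in which second-variation arguments are standard.

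The paper's proof is built to avoid this entirely. It applies the abstract \Cref{thlackconv}, whose proof uses no differentiability of $G$: by \Cref{lemma_2}, if $G$ is not affine there exist $\tilde u_1,\tilde u_2$, $\tilde\lambda$ and $z^0$ for which the scalar convex-combination identity $\langle z^0,G((1-\tilde\lambda)\tilde u_1+\tilde\lambda\tilde u_2)\rangle=(1-\tilde\lambda)\langle z^0,G(\tilde u_1)\rangle+\tilde\lambda\langle z^0,G(\tilde u_2)\rangle$ fails, and then the rescaled targets $z^k=kz^0$ make $J$ violate the convexity inequality at the three points $\tilde u_1$, $\tilde u_2$, $(1-\tilde\lambda)\tilde u_1+\tilde\lambda\tilde u_2$ once $k$ is large (the defect is $c_1-kc_2$ with $c_2>0$). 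The only PDE content left is the equivalence ``$G$ linear $\iff$ $f$ linear'', which the paper proves with explicit cutoff functions supported in $B(0,r)$, again without differentiating $G$. To repair your argument you would have to either strengthen the hypothesis to $f\in C^2(\mathbb{R})$ with locally bounded $f''$ (proving less than the stated theorem), or replace second derivatives by finite differences and a scaling of targets --- which is, in essence, the paper's proof.
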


In the literature, it is well known that convexity cannot be proved by standard techniques, in case the state equation is nonlinear (see, for instance, \cite{ali2016global} and \cite[section 4]{troltzsch2010optimal}). However, to the best of our knowledge, there are not available counterexamples to convexity. In this work, the lack of convexity can be deduced as a consequence of the lack of uniqueness (\Cref{th_nouniq_bound}). Anyway, we prefer to prove \Cref{th_lackconv_intcontr} in \cref{sec:noconv} as a particular case of the following theorem, which holds in a general functional framework and basically asserts that a quadratic functional of the optimal control is convex for any target if and only if its control-to-state map is affine.

\begin{theorem}\label{thlackconv}
Let $U$ and $H$ be real Hilbert spaces. Let
\begin{equation*}
G:U\longrightarrow H
\end{equation*}
be a function. Set:
\begin{equation}\label{general_functional}
J:U\longrightarrow H,\hspace{0.3 cm}J(u)\coloneqq \frac12 \|u\|_U^2+\frac12 \|G(u)-z\|_H^2,
\end{equation}
where $z\in H$.\\
Then, the following are equivalent:
\begin{enumerate}
	\item for any target $z\in H$, $J$ is convex;
	\item $G$ is affine.
\end{enumerate}
\end{theorem}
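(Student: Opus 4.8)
The plan is to establish the two implications separately, with $(2)\Rightarrow(1)$ being essentially immediate and $(1)\Rightarrow(2)$ carrying all the content. For $(2)\Rightarrow(1)$ I would write $G(u)=Lu+b$ with $L:U\to H$ linear and $b\in H$. Then, for any fixed $z$, the map $u\mapsto Lu+(b-z)$ is affine, so its composition with the convex function $\|\cdot\|_H^2$ is convex; adding the convex term $\frac12\|u\|_U^2$ preserves convexity, and hence $J$ is convex for every $z$.

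For the converse, the key device is to read off the convexity defect of $J$ and exploit the freedom in the target. Fix $u,v\in U$ and $t\in(0,1)$, set $w=tu+(1-t)v$, and consider
\[
D(z):=tJ(u)+(1-t)J(v)-J(w).
\]
Expanding the squared norms and using the identity
\[
t\tfrac12\|u\|_U^2+(1-t)\tfrac12\|v\|_U^2-\tfrac12\|w\|_U^2=\tfrac12\,t(1-t)\|u-v\|_U^2,
\]
I expect to reach a decomposition of the form
\[
D(z)=\tfrac12\,t(1-t)\|u-v\|_U^2+\tfrac12 A-\langle B,z\rangle_H,
\]
where $A\in\mathbb{R}$ and $B:=tG(u)+(1-t)G(v)-G(w)\in H$ depend on $u,v,t$ but not on $z$. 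The crucial point is that the quadratic-in-$z$ contributions cancel, since their coefficient is $t+(1-t)-1=0$, so the only surviving $z$-dependence is the single linear term $\langle B,z\rangle_H$.

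The decisive step is then: convexity of $J$ for every $z$ means $D(z)\ge 0$ for all $z\in H$. Since $z\mapsto\langle B,z\rangle_H$ is unbounded above whenever $B\ne 0$ (take $z=\lambda B$, $\lambda\to+\infty$), the inequality $D(z)\ge 0$ can hold for all $z$ only if $B=0$. As $u,v,t$ were arbitrary, this yields
\[
G(tu+(1-t)v)=tG(u)+(1-t)G(v)\qquad\forall\,u,v\in U,\ \forall\,t\in(0,1),
\]
i.e.\ $G$ preserves convex combinations. I would conclude by the standard fact that such a map is affine: setting $L(u):=G(u)-G(0)$, the case $v=0$ gives $L(tu)=tL(u)$ for $t\in[0,1]$, the midpoint case ($t=\tfrac12$) combined with this homogeneity gives additivity $L(u+v)=L(u)+L(v)$, and additivity upgrades the homogeneity from $[0,1]$ to all real scalars, so $L$ is linear and $G=L+G(0)$ is affine.

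The main obstacle — really the single insight that drives everything — is recognizing that $z$ enters the convexity defect only through a term linear in $z$, so that quantifying over all $z$ is exactly the leverage needed to annihilate the nonlinear defect $B$. Once this is in place the rest is routine; the only minor point to handle carefully is that preserving convex combinations for $t\in(0,1)$ already forces full affineness, with no continuity hypothesis on $G$ required.
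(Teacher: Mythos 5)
Your proof is correct and takes essentially the same route as the paper: both expand the square to isolate the only $z$-dependent part of the convexity defect, the linear term $-\langle z, G(u)\rangle_H$ (the $\tfrac12\|z\|_H^2$ contribution cancels), and then scale the target along the convex-combination defect $B$ of $G$ — your $z=\lambda B$, $\lambda\to+\infty$ is exactly the paper's $z^k=kz^0$, $k\to\infty$, with the argument read directly rather than contrapositively. The only other difference is cosmetic: you prove inline that a map preserving convex combinations is affine, which the paper isolates as Lemma~\ref{lemma_2} and attributes to ``linear algebra theory.''
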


In the application of \Cref{thlackconv} to optimal control, $H$ is the observation space, $U$ is the control space and $G$ is the control-to-state map. The vector $z\in H$ is the given target for the state. Note that \Cref{thlackconv} applies both to steady and time-evolution control problems. Furthermore, the map $G$ is not required to be smooth.

We sketch the proof of $1. \implies 2.$. Namely we show the lack of convexity, in case the control-to-state map $G$ is not affine. For the time being, we assume that $G$ is of class $C^2$. In the complete proof in \cref{sec:noconv}, the smoothness of $G$ is not required.

We start developing the functional \cref{general_functional}, for any control $u\in U$
\begin{align*} 
J(u) &= \frac12\left\|u\right\|_U^2 +\frac{1}{2} \left\|G(u)-z\right\|^2 \\
&= \frac12 \left\|u\right\|_{U}^2 +\frac{1}{2}\left\|G(u)\right\|_{H}^2+\frac{1}{2}\left\|z\right\|_{H}^2-\langle G(u),z\rangle \nonumber\\
&= P(u)+\frac{1}{2}\left\|z\right\|_{H}^2-\langle G(u),z\rangle, \nonumber\\
\end{align*}
where $\langle\cdot,\cdot\rangle$ denotes the scalar product of $H$ and
\begin{equation*}
P(u)\coloneqq \frac12 \left\|u\right\|_{U}^2 +\frac{1}{2}\left\|G(u)\right\|_{H}^2.
\end{equation*}

Now, since $G$ is not affine, there exists a control $u_1\in U$ and a direction $v_1\in U$, such that the second directional derivative of $G$ at $u_1$ along $v_1$ does not vanish
\begin{equation}
D^2G\left(u_1\right)\left(v_1,v_1\right)\neq 0.
\end{equation}
Take as target $z^k\coloneqq kD^2G\left(u_1\right)\left(v_1,v_1\right)$, with $k>0$ to be made precise later and compute the second differential of the functional $J$ at $u_1$ along direction $v_1$
\begin{align*}
\langle d^2J(u_1)v_1,v_1\rangle &= \frac{d^2}{dv_1^2}P(u_1)-\left\langle D^2G\left(u_1\right)\left(v_1,v_1\right),z^k\right\rangle \\
&= \frac{d^2}{dv_1^2}P(u_1)-k\left\|D^2G\left(u_1\right)\left(v_1,v_1\right)\right\|_H^2 <0, \nonumber\\
\end{align*}
choosing $k$ sufficiently large. This shows the lack of convexity in the smooth case. The general nonsmooth case is handled in \cref{sec:noconv}.

\Cref{thlackconv} can be applied to internal and boundary control, both in the elliptic and parabolic context.

The lack of convexity and uniqueness of the minimizer is a serious warning for numerics. Indeed, if the problem is not convex the convergence of gradient methods is not guaranteed a priori. Furthermore, by employing our techniques, one can find several counterexamples where there exist local minimizers, which are not global. Then, gradient methods may converge to the local minimizer, thus missing the global ones.

The rest of the manuscript is organized as follows. In \cref{sec:noconv}, we prove \Cref{thlackconv} and we deduce \Cref{th_lackconv_intcontr}. In \cref{sec:nouniq}, we provide the counterexample to uniqueness of the global minimizer, in the context of boundary control (\cref{subsec:nouniq.boundary}) and internal control (\cref{subsec:nouniq.internal}). In \cref{sec:simulations}, we perform numerical simulations which explain and confirm our theoretical results. In the appendix, we prove some Lemmas needed for our construction.

%

\section{Lack of convexity: proof of \Cref{thlackconv} and \Cref{th_lackconv_intcontr}}
\label{sec:noconv}

In the proof of \Cref{thlackconv}, we need the following lemma.

\begin{lemma}\label{lemma_2}
Let $V_1$ and $V_2$ be two real vector spaces. Take a function
\begin{equation*}
G:V_1\longrightarrow V_2.
\end{equation*}
Then, $G$ is affine if and only if, for any $\lambda\in [0,1]$ and $(v,w)\in V_1^2$
\begin{equation}\label{lemma_2_eq1}
G((1-\lambda)v+\lambda w)=(1-\lambda)G(v)+\lambda G(w).
\end{equation}
\end{lemma}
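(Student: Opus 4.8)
The plan is to prove both implications, with the forward direction being a direct computation and the converse requiring a bootstrapping argument. For the forward direction, if $G$ is affine, write $G = L + c$ with $L: V_1 \to V_2$ linear and $c := G(0)$ a constant; then for any $\lambda$ (not merely for $\lambda \in [0,1]$) one has $G((1-\lambda)v + \lambda w) = L((1-\lambda)v + \lambda w) + c = (1-\lambda)L(v) + \lambda L(w) + c$, and since $(1-\lambda) + \lambda = 1$ this equals $(1-\lambda)G(v) + \lambda G(w)$, which is exactly \cref{lemma_2_eq1}.

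For the converse, I would first reduce to the case $G(0) = 0$ by setting $H(x) := G(x) - G(0)$. The hypothesis \cref{lemma_2_eq1} is invariant under this shift, so $H$ satisfies the same identity and moreover $H(0) = 0$; it then suffices to show that $H$ is linear. Putting $w = 0$ in the identity for $H$ gives $H((1-\lambda)v) = (1-\lambda)H(v)$ for every $\lambda \in [0,1]$, i.e. $H(tv) = tH(v)$ whenever $t \in [0,1]$. Next I would extract additivity: the midpoint case $\lambda = 1/2$ yields $H\!\left(\tfrac{v+w}{2}\right) = \tfrac12 H(v) + \tfrac12 H(w)$, while the scaling just obtained (with $t = 1/2$ applied to the vector $v+w$) gives $H\!\left(\tfrac{v+w}{2}\right) = \tfrac12 H(v+w)$; comparing the two expressions produces $H(v+w) = H(v) + H(w)$.

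It remains to upgrade the scaling $H(tv) = tH(v)$ from $t \in [0,1]$ to arbitrary real $t$, and this is the only delicate point, since we are given no continuity or regularity of $G$. I would argue purely algebraically: iterating additivity gives $H(nv) = nH(v)$ for $n \in \mathbb{N}$, and writing any $t \geq 0$ as $t = n + s$ with $n \in \mathbb{N}$ and $s \in [0,1)$ combines the integer scaling with the interval scaling to yield $H(tv) = tH(v)$ for all $t \geq 0$. Additivity also forces $H(-v) = -H(v)$ via $H(v) + H(-v) = H(0) = 0$, which extends the homogeneity to negative scalars. Thus $H$ is additive and $\mathbb{R}$-homogeneous, hence linear, and $G = H + G(0)$ is affine. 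The crux of the argument is that full real homogeneity is recovered from additivity together with the interval-scaling identity, so that no continuity assumption on $G$ is required.
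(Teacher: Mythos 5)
Your proof is correct, and it supplies something the paper does not: the paper's entire justification for this lemma is the one-line remark that it ``can be deduced by linear algebra theory,'' with no argument given. Your forward direction is the standard computation from the decomposition $G = L + G(0)$. Your converse is the substantive part, and it is handled properly: after normalizing to $H(x) = G(x) - G(0)$ (and correctly checking that \cref{lemma_2_eq1} is preserved by this shift), you extract interval homogeneity $H(tv) = tH(v)$ for $t \in [0,1]$ by setting $w = 0$, obtain full additivity by comparing the midpoint identity with the $t = \tfrac12$ scaling, and then recover homogeneity for all real scalars by writing $t = n + s$ and using $H(-v) = -H(v)$. The last step is exactly the delicate point: additivity alone only gives $\mathbb{Q}$-homogeneity (Hamel-basis pathologies block $\mathbb{R}$-homogeneity in general), and it is the interval scaling inherited directly from the hypothesis that closes this gap without any continuity or regularity assumption on $G$. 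So your argument is not only complete but makes explicit why the equivalence holds in the stated generality of arbitrary real vector spaces, which the paper's appeal to ``linear algebra theory'' leaves implicit.
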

The proof can be deduced by linear algebra theory. We prove now \Cref{thlackconv}.

\begin{proof}[Proof of \Cref{thlackconv}]
$2.\implies 1.$ If $G$ is affine, by direct computations and convexity of the square of Hilbert norms, $J$ is convex for any $z\in H$.

$1. \implies 2.$ Assume now $G$ is not affine. We construct a target $z\in H$ such that $J$ is not convex.

In what follows, we denote by $\langle \cdot,\cdot \rangle $ the scalar product of $H$.\\
\textit{Step 1} \ \textbf{Proof of the existence of $\tilde{\lambda}\in [0,1]$, $(\tilde{u}_1,\tilde{u}_2)\in U^2$ and $z^0\in H$ such that:
	\begin{equation*}
	\left\langle z^0,G\left(\left(1-\tilde{\lambda}\right)\tilde{u}_1+\tilde{\lambda} \tilde{u}_2\right)\right\rangle < \left(1-\tilde{\lambda}\right)\left\langle z^0,G\left(\tilde{u}_1\right)\right\rangle +\tilde{\lambda}\left\langle z^0,G\left(\tilde{u}_2\right)\right\rangle  
	\end{equation*}}\\
First of all, we note that, up to change the sign of $z^0$, we can reduce to prove the existence of $\tilde{\lambda}\in [0,1]$, $\left(\tilde{u}_1,\tilde{u}_2\right)\in U^2$ and $z^0\in H$ such that:
\begin{equation}\label{thlackconv_eq6}
\left\langle z^0,G\left(\left(1-\tilde{\lambda}\right)\tilde{u}_1+\tilde{\lambda} \tilde{u}_2\right)\right\rangle \neq \left(1-\tilde{\lambda}\right)\langle z^0,G\left(\tilde{u}_1\right)\rangle +\tilde{\lambda}\left\langle z^0,G(\tilde{u}_2)\right\rangle . 
\end{equation}
Reasoning by contradiction, if \cref{thlackconv_eq6} were not true, for any $z\in H$, for every $(u_1,u_2)\in U^2$ and for each $\lambda\in [0,1]$,
\begin{equation*}
\left\langle z,G\left(\left(1-\lambda\right)u_1+\lambda u_2\right)\right\rangle = \left(1-\lambda\right)\left\langle z,G(u_1)\right\rangle +\lambda\left\langle z,G\left(u_2\right)\right\rangle . 
\end{equation*}
By the arbitrariness of $z$, this leads to:
\begin{equation*}
G\left(\left(1-\lambda\right)u_1+\lambda u_2\right)=\left(1-\lambda\right)G\left(u_1\right)+\lambda G\left(u_2\right), 
\end{equation*}
for any $\lambda\in [0,1]$ and $\left(u_1,u_2\right)\in U^2$. Then, by \Cref{lemma_2}, $G$ is affine, which contradicts our hypothesis. This finishes this step.\\
\textit{Step 2} \ \textbf{Conclusion}\\
We remind that in the first step, we have proved the existence of $\tilde{\lambda}\in [0,1]$, $\left(\tilde{u}_1,\tilde{u}_2\right)\in U^2$ and $z^0\in H$ such that:
\begin{equation*}
\left\langle z^0,G\left(\left(1-\tilde{\lambda}\right)\tilde{u}_1+\tilde{\lambda} \tilde{u}_2\right)\right\rangle < \left(1-\tilde{\lambda}\right)\left\langle z^0,G\left(\tilde{u}_1\right)\right\rangle +\tilde{\lambda}\left\langle z^0,G\left(\tilde{u}_2\right)\right\rangle .
\end{equation*}
Now, arbitrarily fix $k\in\mathbb{N}^*$. Set as target:
\begin{equation*}
z^k\coloneqq kz^0.
\end{equation*}
We develop $J$ with target $z^k$, getting for any $u\in U$:
\begin{align*} 
J\left(u\right) &= \frac12\left\|u\right\|_U^2+\frac12\left\|G(u)-z^k\right\|_H^2 \\
&=\frac12\left\|u\right\|_U^2+\frac12\left\|G(u)\right\|_H^2+\frac12 \left\|z^k\right\|_H^2-\left\langle z^k,G(u)\right\rangle \nonumber\\
&=P(u)+\frac12 \left\|z^k\right\|_H^2-\left\langle z^k,G(u)\right\rangle ,
\end{align*}
where
\begin{equation*}
P:U\longrightarrow \mathbb{R},\hspace{0.3 cm}u\longmapsto \frac12\left\|u\right\|_U^2+\frac12\left\|G(u)\right\|_H^2.
\end{equation*}
At this point, we introduce:
\begin{equation*}
c_1\coloneqq \left(1-\tilde{\lambda}\right)P\left(\tilde{u}_1\right)+\tilde{\lambda} P\left(\tilde{u}_2\right)-P\left(\left(1-\tilde{\lambda}\right)\tilde{u}_1+\tilde{\lambda} \tilde{u}_2\right)
\end{equation*}
and
\begin{equation*}
c_2	\coloneqq \left(1-\tilde{\lambda}\right)\langle z^0,G\left(\tilde{u}_1\right)\rangle +\tilde{\lambda}\left\langle z^0,G\left(\tilde{u}_2\right)\right\rangle -\left\langle z^0,G\left( \left(1-\tilde{\lambda}\right)\tilde{u}_1+\tilde{\lambda} \tilde{u}_2\right)\right\rangle . 
\end{equation*}
Then, taking as target $z^k$,
\begin{equation*}
\left(1-\tilde{\lambda}\right)J\left(\tilde{u}_1\right)+\tilde{\lambda}J\left(\tilde{u}_2\right)-J\left(\left(1-\tilde{\lambda}\right)\tilde{u}_1+\tilde{\lambda} \tilde{u}_2\right)=c_1-kc_2.
\end{equation*}
By the first step, $c_2>0$. Then, for $k$ large enough, we have:
\begin{equation*}
\left(1-\tilde{\lambda}\right)J\left(\tilde{u}_1\right)+\tilde{\lambda}J\left(\tilde{u}_2\right)-J\left(\left(1-\tilde{\lambda}\right)\tilde{u}_1+\tilde{\lambda} \tilde{u}_2\right)=c_1-kc_2<0,
\end{equation*}
which yields
\begin{equation*}
\left(1-\tilde{\lambda}\right)J\left(\tilde{u}_1\right)+\tilde{\lambda}J\left(\tilde{u}_2\right)<J\left(\left(1-\tilde{\lambda}\right)\tilde{u}_1+\tilde{\lambda} \tilde{u}_2\right),
\end{equation*}
i.e. the desired lack of convexity of $J$. This concludes the proof.
\end{proof}

\Cref{thlackconv} applies in semilinear control, both in the elliptic case and in the parabolic one. We show how to apply \Cref{thlackconv} for the control problem \cref{semilinear_internal_elliptic_1}-\cref{functionalinternal}, thus proving \Cref{th_lackconv_intcontr}.

\begin{proof}[Proof of \Cref{th_lackconv_intcontr}]
Take
\begin{itemize}
	\item control space $U=L^2(B(0,r))$;
	\item $H=L^2\left(B(0,R)\setminus B(0,r)\right)$ with scalar product\\
	$\langle v_1,v_2\rangle\coloneqq \beta\int_{B(0,R)\setminus B(0,r)}v_1v_2dx$; 
	\item the map
	\begin{equation*}
	G:L^2(B(0,r))\longrightarrow L^2\left(B(0,R)\setminus B(0,r)\right)
	\end{equation*}
	\begin{equation*}
	u\longrightarrow y_u\hspace{-0.1 cm}\restriction_{B(0,R)\setminus B(0,r)},
	\end{equation*}
	where $y_u$ fulfills \cref{semilinear_internal_elliptic_1} with control $u$.
\end{itemize}
Then, by \Cref{thlackconv}, we have two possibilities:
\begin{enumerate}
	\item $G$ is linear. Then, $J$ is convex for any target $z\in L^2\left(B(0,R)\setminus B(0,r)\right)$.
	\item $G$ is not linear. Then, there exists a target $z\in L^2\left(B(0,R)\setminus B(0,r)\right)$ such that the corresponding $J$ is not convex.
\end{enumerate}
It remains to prove that $G$ is linear if and only if $f$ is linear. Now, if $f$ is linear, the linearity of $G$ follows from linear PDE theory \cite[Part I]{EPG}. Suppose now $G$ is linear. Let us prove that $f$ is linear, namely for any $\alpha$, $\beta$, $\theta_1$ and $\theta_2\in \mathbb{R}$
\begin{equation}\label{f_linear}
f\left(\alpha \theta_1+\beta\theta_2\right)=\alpha f\left( \theta_1\right)+\beta f\left(\theta_2\right).
\end{equation}
To this extent, let us introduce a cut-off function $\zeta \in C^{\infty}(\mathbb{R}^n)$ such that:
\begin{itemize}
	\item $\zeta(0)=1$;
	\item $\mbox{supp}(\zeta)\subset\subset B(0,r)$.
\end{itemize}
For $i=1,2$, set $y_{\theta_i}\coloneqq \theta_i \zeta$ and $u_{\theta_i}\coloneqq \left[-\Delta y_{\theta_i}+f\left(y_{\theta_i}\right)\right]\hspace{-0.1 cm}\restriction_{B(0,r)}$. Then, by the linearity of $G$
\begin{eqnarray}\label{}
f\left(\alpha y_{\theta_1}+\beta y_{\theta_2}\right)&=&f\left(\alpha G\left(u_{\theta_1}\right)+\beta G\left(u_{\theta_2}\right)\right)\nonumber\\
&=&f\left(G\left(\alpha u_{\theta_1}+\beta u_{\theta_2}\right)\right)\nonumber\\
&=&\Delta G\left(\alpha u_{\theta_1}+\beta u_{\theta_2}\right)+\left(\alpha u_{\theta_1}+\beta u_{\theta_2}\right)\chi_{B(0,r)}\nonumber\\
&=&\alpha\Delta G\left( u_{\theta_1}\right)+\beta \Delta G\left(u_{\theta_2}\right)+\alpha u_{\theta_1}\chi_{B(0,r)}+\beta u_{\theta_2}\chi_{B(0,r)}\nonumber\\
&=&\alpha f\left( y_{\theta_1}\right)+\beta f\left(y_{\theta_2}\right),
\end{eqnarray}
whence
\begin{eqnarray}\label{}
f\left(\alpha \theta_1+\beta\theta_2\right)&=&f\left(\alpha y_{\theta_1}(0)+\beta y_{\theta_2}(0)\right)\nonumber\\
&=&\alpha f\left( y_{\theta_1}(0)\right)+\beta f\left(y_{\theta_2}(0)\right)\nonumber\\
&=&\alpha f\left( \theta_1\right)+\beta f\left(\theta_2\right),
\end{eqnarray}
as required.
\end{proof}

\section{Lack of uniqueness}
\label{sec:nouniq}

In this section, we prove our nouniqueness results. We start with boundary control (\Cref{th_nouniq_bound}), to later deal with internal control (\Cref{th_nouniq_int}).

\subsection{Boundary control}
\label{subsec:nouniq.boundary}

Hereafter, we will work with radial targets, defined below.
\begin{definition}\label{Def_radial_target_boundary}
A function $z:B(0,R)\longrightarrow \mathbb{R}$ is said to be radial if there exists $\phi:[0,R]\longrightarrow \mathbb{R}$, such that, for any $x\in B(0,R)$, we have $z(x)=\phi(\|x\|)$.
\end{definition}

\begin{figure}[tbhp]
\centering
\includegraphics[width=13 cm]{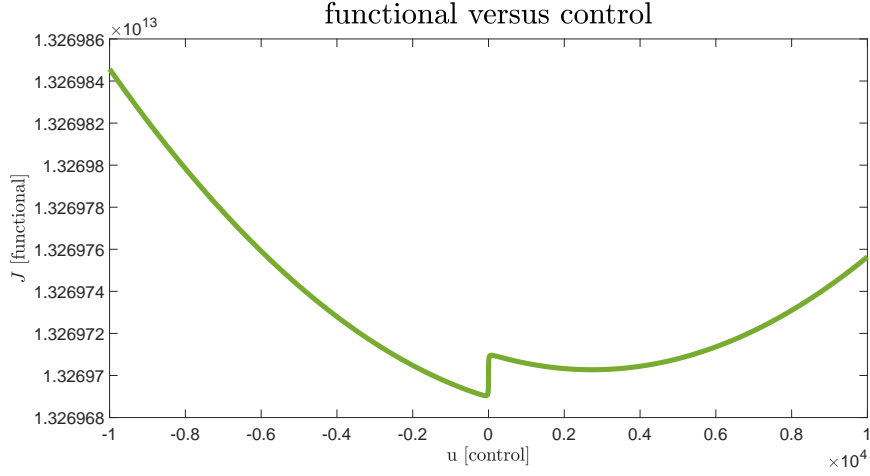}
\caption{functional versus control (nonuniqueness of the \textit{local} minimizer). This plot is obtained by drawing in MATLAB the graph of $J$ defined in \cref{functional_nouniqboundary}, with $R=1$ and nonlinearity $f(y)=y^3$. The target $z=260000\chi_{\left(0,\frac14\right)\cup \left(\frac34,1\right)}-10300000\chi_{\left(\frac14,\frac34\right)}$.}
\label{fig:functionalnouniqlocal}
\end{figure}

\begin{figure}[tbhp]
\begin{center}
	\centering
	\includegraphics[width=13 cm]{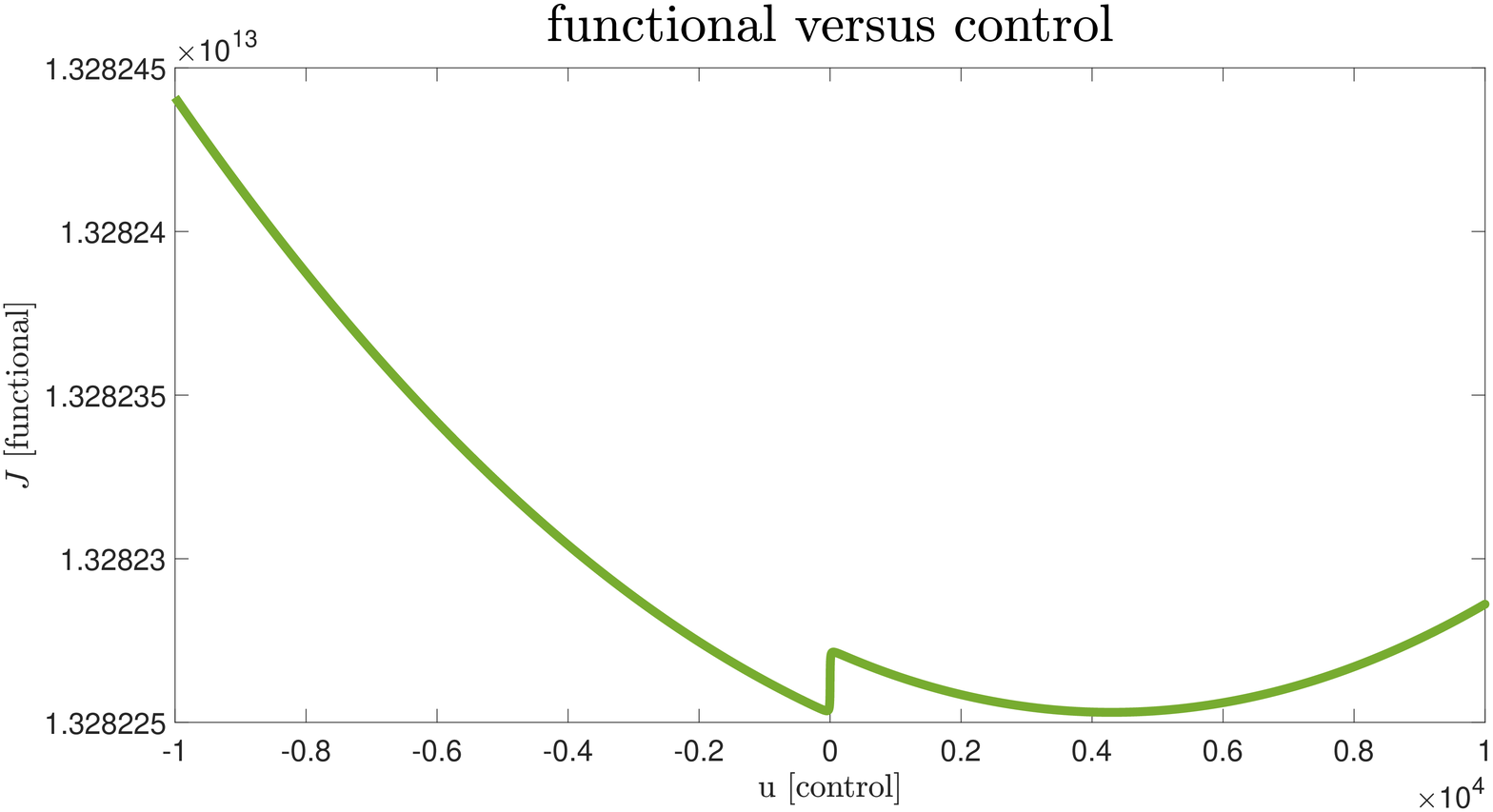}
	\caption{functional versus control (nonuniqueness of the \textit{global} minimizer). This plot is obtained by drawing in MATLAB the graph of $J$ defined in \cref{functional_nouniqboundary}, with $R=1$ and nonlinearity $f(y)=y^3$. The target $z=410000\chi_{\left(0,\frac14\right)\cup \left(\frac34,1\right)}-10300000\chi_{\left(\frac14,\frac34\right)}$.}
	\label{fig:functionalnouniq1}
\end{center}
\end{figure}

We introduce the control-to-state map
\begin{equation}\label{control_to_state_PDE_boundary}
G:L^{\infty}(\partial B(0,R))\longrightarrow L^2(B(0,R))
\end{equation}
\begin{equation*}
u\longmapsto y_u,
\end{equation*}
where $y_u$ is the solution to \cref{semilinear_boundary_elliptic_1} with control $u$. Then, set:
\begin{equation}\label{def_functional_control_target_PDE_boundary}
I:L^{\infty}(\partial B(0,R))\times L^{2}(B(0,R))\longrightarrow \mathbb{R}
\end{equation}
\begin{equation*}
I(u,z)\coloneqq \frac12\int_{\partial B(0,R)}|u|^2d\sigma(x)+\frac{\beta}{2}\int_{B(0,R)} |G(u)|^2dx-\beta\int_{B(0,R)} G(u)zdx,
\end{equation*}
where $G$ is the control-to-state map introduced in \cref{control_to_state_PDE_boundary}. One recognizes that, for any target $z\in L^{\infty}(B(0,R))$, $I(\cdot,z)+\frac{{\beta}}{2} \|z\|_{L^2(B(0,R))}^2$ coincides with the functional $J$ defined in \cref{functional_nouniqboundary} with target $z$. Then, for any target $z\in L^{\infty}(B(0,R))$ minimizing $I(\cdot, z)$ is equivalent to minimizing $J$ with target $z$. Such translation is convenient, because $I(0,z)=0$ for any target $z\in L^{\infty}(B(0,R))$.

We establish some important properties of the solutions of the state equation \cref{semilinear_boundary_elliptic_1}:
\begin{itemize}
\item The unique constant solution of the equation $-\Delta y+f(y) = 0$ in any domain $\Omega\subset B(0,R)$ is $y \equiv 0$ (\Cref{lemma_rangesolutionelliptic}). In particular, $G(u) = 0$ if and only if $u = 0$ holds.
\item By comparison principle,
if $u \geq 0$ on $\partial B(0,R)$ and $u \not\equiv 0$, then $G(u)(x) > 0$ in $B(0,R)$.
\item By comparison principle,
if $u \leq 0$ on $\partial B(0,R)$ and $u \not\equiv 0$, then $G(u)(x) < 0$ in $B(0,R)$.
\end{itemize}

We introduce:
\begin{equation}\label{def_h_1_PDE_boundary}
h_1:L^{\infty}(B(0,R))\longrightarrow \mathbb{R}, \hspace{0.3 cm}h_1(z)\coloneqq \inf\left\{I(u,z) \ | \ u\equiv k, \ k\in (-\infty,0]\right\}
\end{equation}
and
\begin{equation}\label{def_h_2_PDE_boundary}
h_2:L^{\infty}(B(0,R))\longrightarrow \mathbb{R},\hspace{0.3 cm}h_2(z)\coloneqq \inf\left\{I(u,z) \ | \ u\equiv k, \ k\in [0,+\infty)\right\}.
\end{equation}

We formulate the first lemma.

\begin{lemma}\label{lemma3_boundary}
Let $C=\left(-\infty,0\right]$ or $C=\left[0,+\infty\right)$. Then,
\begin{enumerate}
	\item for any $z\in L^{\infty}(B(0,R))$, there exists $u_{z}\in C$ such that:
	\begin{equation*}
	I(u_{z},z)=\inf_{C}[I(\cdot,z)].
	\end{equation*}
	Furthermore, for any minimizer $u_z$
	\begin{equation*}
	|u_{z}|\leq \sqrt{\frac{\beta}{R^{n-1}n\alpha(n)}}\|z\|_{L^2},
	\end{equation*}
	where $n\alpha(n)$ is the surface area of $\partial B(0,1)\subset \mathbb{R}^n$, the unit sphere.
	\item the map
	\begin{equation*}
	h:L^{\infty}(B(0,R))\longrightarrow \mathbb{R},\hspace{0.3 cm} h(z)\coloneqq \inf_{C}\left[I(\cdot, z)\right]
	\end{equation*}
	is continuous.
\end{enumerate}
\end{lemma}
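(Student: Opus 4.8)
The plan is to exploit that the minimization in \Cref{lemma3_boundary} is effectively one-dimensional: since we range only over constant controls $u\equiv k$, the functional becomes a real function of the single scalar $k\in C$. Writing $S_R:=R^{n-1}n\alpha(n)$ for the surface measure of $\partial B(0,R)$, a constant control contributes $\frac12\int_{\partial B(0,R)}|k|^2\,d\sigma=\frac12 S_R k^2$, so that
\begin{equation*}
I(k,z)=\tfrac12 S_R k^2+\tfrac{\beta}{2}\|G(k)\|_{L^2}^2-\beta\langle G(k),z\rangle_{L^2}=J(k)-\tfrac{\beta}{2}\|z\|_{L^2}^2,
\end{equation*}
where $J(k)=\tfrac12 S_R k^2+\tfrac{\beta}{2}\|G(k)-z\|_{L^2}^2$ is the original functional restricted to constants (this identity is the one already noted after \cref{def_functional_control_target_PDE_boundary}). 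The only analytic input I need from the well-posedness theory is that $k\mapsto G(k)$ is continuous from $\mathbb{R}$ into $L^2(B(0,R))$ (continuous dependence on the Dirichlet datum) and maps bounded sets to bounded sets; hence $k\mapsto I(k,z)$ is continuous.

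For part (1), I would first derive the a priori bound and then deduce existence from it. Observe that $0\in C$ in both cases and $I(0,z)=0$ (because $G(0)=0$ by \Cref{lemma_rangesolutionelliptic}), so any minimizer $u_z\equiv k$ satisfies $I(k,z)\le 0$, i.e. $J(k)\le\tfrac{\beta}{2}\|z\|_{L^2}^2$. Since trivially $J(k)\ge\tfrac12 S_R k^2$, combining the two gives $\tfrac12 S_R k^2\le\tfrac{\beta}{2}\|z\|_{L^2}^2$, which is exactly the claimed bound $|u_z|\le\sqrt{\beta/S_R}\,\|z\|_{L^2}$. The same computation yields coercivity, since $I(k,z)\ge\tfrac12 S_R k^2-\tfrac{\beta}{2}\|z\|_{L^2}^2\to+\infty$ as $|k|\to\infty$. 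Thus any minimizing sequence in $C$ stays in the bounded sublevel set $\{I(\cdot,z)\le 0\}$, and continuity of $I(\cdot,z)$ together with the Weierstrass theorem on a compact interval produces a minimizer $u_z\in C$.

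For part (2), I would establish that $h$ is locally Lipschitz, hence continuous. Given targets $z_1,z_2$, pick a minimizer $u_{z_1}$ for $z_1$; testing with it gives $h(z_2)\le I(u_{z_1},z_2)$, and since $z\mapsto I(u,z)$ is affine,
\begin{equation*}
h(z_2)-h(z_1)\le I(u_{z_1},z_2)-I(u_{z_1},z_1)=-\beta\langle G(u_{z_1}),z_2-z_1\rangle_{L^2}\le\beta\|G(u_{z_1})\|_{L^2}\|z_2-z_1\|_{L^2}.
\end{equation*}
The symmetric estimate with a minimizer $u_{z_2}$ bounds $h(z_1)-h(z_2)$. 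To make the constant uniform near a fixed $z_0$, note that for $z$ in an $L^\infty$-ball around $z_0$ the $L^2$-norms $\|z\|_{L^2}$ are uniformly bounded (finite measure of $B(0,R)$), so by part (1) the minimizers $u_z$ lie in a fixed compact interval $[-M,M]$, on which $\sup_{|k|\le M}\|G(k)\|_{L^2}<\infty$ by the boundedness property of $G$. This yields a uniform Lipschitz constant, and since $\|z_1-z_2\|_{L^2}\le|B(0,R)|^{1/2}\|z_1-z_2\|_{L^\infty}$ the estimate is also an $L^\infty$ continuity statement.

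I expect the main obstacle to be purely the analytic regularity of the control-to-state map $G$ — specifically its continuity and its boundedness on bounded sets of constant controls — which underpins both the existence step (Weierstrass) and the uniform Lipschitz bound. Once these standard facts from the well-posedness theory of \cref{semilinear_boundary_elliptic_1} are in hand, the remaining arguments are the elementary completion-of-the-square estimate and the competitor-testing inequality displayed above.
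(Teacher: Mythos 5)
Your proposal is correct and follows essentially the same route as the paper: the a priori bound comes from testing against the zero control (using $I(0,z)=0$ and the quadratic boundary term), existence follows from the resulting coercivity via the direct method (in your version, one-dimensional Weierstrass), and continuity of $h$ is obtained by cross-testing minimizers of $I(\cdot,z_1)$ and $I(\cdot,z_2)$, with uniformity supplied by the part-(1) bound together with the continuity and boundedness of $G$ on compact sets of constant controls. The only cosmetic difference is that you phrase the continuity step as a local Lipschitz estimate, whereas the paper writes the same cross-testing inequality in $\varepsilon$--$\delta$ form.
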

We prove \Cref{lemma3_boundary} in \cref{sec:appendix.Preliminaries for boundary control}. We now state the second lemma.

\begin{lemma}\label{lemma4_boundary}
Assume there exists $z^0\in L^{\infty}(B(0,R))$ such that
\begin{equation*}
h_1(z^0)<0\hspace{1 cm}\mbox{and}\hspace{1 cm}h_2(z^0)<0,
\end{equation*}
where $h_1$ and $h_2$ are defined in \cref{def_h_1_PDE_boundary} and \cref{def_h_2_PDE_boundary} resp.
Then, there exists a target $\tilde{z}\in L^{\infty}(B(0,R))$ such that
\begin{equation*}
h_1(\tilde{z})=h_2(\tilde{z})<0.
\end{equation*}
\end{lemma}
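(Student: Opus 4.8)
The plan is to fix the target $z^0$ provided by hypothesis and to deform it by adding a spatial constant, equalizing $h_1$ and $h_2$ through an intermediate value argument. Since $u\equiv 0$ is admissible for both $C=(-\infty,0]$ and $C=[0,+\infty)$ and $I(0,z)=0$, we always have $h_1(z)\le 0$ and $h_2(z)\le 0$. If $h_1(z^0)=h_2(z^0)$ we are done with $\tilde z=z^0$, so assume without loss of generality $h_1(z^0)<h_2(z^0)<0$; the reversed case is handled symmetrically by subtracting constants instead of adding them.

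Consider the family $z_t\coloneqq z^0+t$ for $t\ge 0$ and set $\psi(t)\coloneqq h_1(z_t)-h_2(z_t)$. For a constant control $u\equiv k$ a direct computation gives $I(k,z_t)=I(k,z^0)-\beta t\int_{B(0,R)}G(k)\,dx$. By the comparison principle recalled before the statement, $\int_{B(0,R)}G(k)\,dx>0$ for $k>0$ and $\int_{B(0,R)}G(k)\,dx<0$ for $k<0$. Hence, for fixed $k\ge 0$ the map $t\mapsto I(k,z_t)$ is non-increasing, so that $h_2(z_t)$ is non-increasing in $t$; while for fixed $k\le 0$ it is non-decreasing, so that $h_1(z_t)$ is non-decreasing in $t$ and bounded below by $h_1(z^0)$.

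I would then record the two limiting behaviors. Taking $k=1$ yields $h_2(z_t)\le I(1,z_t)=I(1,z^0)-\beta t\int_{B(0,R)}G(1)\,dx\to-\infty$ as $t\to+\infty$, since $\int_{B(0,R)}G(1)\,dx>0$. Combined with $h_1(z_t)\ge h_1(z^0)$, this gives $\psi(t)\ge h_1(z^0)-h_2(z_t)\to+\infty$, whereas $\psi(0)=h_1(z^0)-h_2(z^0)<0$. The map $t\mapsto z_t$ is continuous from $[0,+\infty)$ into $L^{\infty}(B(0,R))$, since $\|z_t-z_s\|_{L^{\infty}}=|t-s|$, and $h_1,h_2$ are continuous by \Cref{lemma3_boundary}; thus $\psi$ is continuous and the intermediate value theorem furnishes $t^{\ast}>0$ with $\psi(t^{\ast})=0$, i.e. $h_1(z_{t^{\ast}})=h_2(z_{t^{\ast}})$.

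It remains to verify that this common value is strictly negative, which is the only genuinely delicate point and the main obstacle: one must ensure the two infima do not become equal merely because both have been pushed up to $0$. This is exactly ruled out by the monotonicity established in the second paragraph: since $h_2(z_t)$ is non-increasing in $t$, we get $h_2(z_{t^{\ast}})\le h_2(z_0)=h_2(z^0)<0$, hence $h_1(z_{t^{\ast}})=h_2(z_{t^{\ast}})<0$. Setting $\tilde z\coloneqq z_{t^{\ast}}=z^0+t^{\ast}$ then gives $h_1(\tilde z)=h_2(\tilde z)<0$, as required.
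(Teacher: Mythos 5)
Your proposal is correct and takes essentially the same route as the paper: deform the target by adding a constant, and apply the intermediate value theorem to the difference of $h_1$ and $h_2$ along this family, using the continuity of both infima from \Cref{lemma3_boundary} and the sign of $\int_{B(0,R)} G(k)\,dx$ supplied by the comparison principle. The only difference is a technical detail within the same strategy: the paper pins down a finite shift $\mu_0=\|z^0\|_{L^{\infty}}$ at which $h_1$ vanishes exactly (so the IVT runs on the compact interval $[0,\mu_0]$), whereas you send $t\to+\infty$, using that $h_2(z_t)\to-\infty$ while $h_1(z_t)$ is non-decreasing and bounded below, and both arguments secure the strict negativity at the crossing point in essentially the same way.
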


The proof of \Cref{lemma4_boundary} can be found in \cref{sec:appendix.Preliminaries for boundary control}. The following lemma is the key-point for the proof of the existence of two local minimizers for \cref{functional_nouniqboundary}. At this point we employ the nonlinearity of the state equation \cref{semilinear_boundary_elliptic_1}.

\begin{lemma}\label{lemma_rank_matrix_boundary}
Let $\Omega$ be a bounded open subset of $\mathbb{R}^n$, with $\partial \Omega \in C^{\infty}$. Let $u_{-}<0<u_{+,1}<u_{+,2}$ be three constant controls. For any $u\in L^{\infty}\left(\partial \Omega\right)$, let $G\left(u\right)$ be the solution to
\begin{equation}\label{semilinear_boundary_elliptic_1_gdomain_wp_2}
\begin{dcases}
-\Delta y+f(y)=0\hspace{2.8 cm} & \mbox{in} \hspace{0.10 cm}\Omega\\
y=u  & \mbox{on}\hspace{0.10 cm} \partial \Omega.
\end{dcases}
\end{equation}
Assume $f\in C^1\left(\mathbb{R}\right)\cap C^2\left(\mathbb{R}\setminus \left\{0\right\}\right)$ is strictly increasing and
\begin{equation*}\label{condition_boundary_3}
f^{\prime\prime}(y)\neq 0\hspace{1 cm}\forall \ y\neq 0.
\end{equation*}
Set
\begin{equation}
\overline{\lambda}\coloneqq \frac{\int_{\Omega}G\left(u_{+,2}\right)(x)dx}{\int_{\Omega}G\left(u_{+,1}\right)(x)dx},
\end{equation}
\begin{equation}\label{lemma_molt_defomega1}
\omega_1\coloneqq \left\{x\in \Omega \ | \ G\left(u_{+,2}\right)(x)<\overline{\lambda} \hspace{0.003 cm} G\left(u_{+,1}\right)(x)\right\}
\end{equation}
and
\begin{equation}\label{lemma_molt_defomega2}
\omega_2\coloneqq \left\{x\in \Omega \ | \ G\left(u_{+,2}\right)(x)>\overline{\lambda} \hspace{0.003 cm} G\left(u_{+,1}\right)(x)\right\}.
\end{equation}
There exist $i\in \left\{1,2\right\}$, such that
\begin{equation}\label{def_Gamma_boundary}
\Gamma\coloneqq \beta\begin{tikzpicture}[baseline=(current bounding box.center)]
\matrix (Gamma) [matrix of math nodes,nodes in empty cells,right delimiter={]},left delimiter={[} ]{
	\mathlarger{\int_{\omega_1}}G(u_{-})dx\hspace{0.3 cm}&\mathlarger{\int_{\omega_2}}G(u_{-})dx     \\
	\mathlarger{\int_{\omega_1}}G(u_{+,i})dx&\mathlarger{\int_{\omega_2}}G(u_{+,i}) dx  \\
} ;
\end{tikzpicture}.
\end{equation}
is invertible.
\end{lemma}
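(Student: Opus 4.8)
The plan is to reduce everything to an elementary $2\times 2$ determinant computation, once the single nontrivial fact has been established that both $\omega_1$ and $\omega_2$ have positive Lebesgue measure; this is the only place where the nondegeneracy hypothesis $f^{\prime\prime}\neq 0$ enters. Throughout I write $y_i:=G(u_{+,i})$ for $i=1,2$ and abbreviate the entries of $\Gamma$, dropping the common factor $\beta$, as $a_j:=\int_{\omega_j}G(u_-)\,dx$ and $b_j^{(i)}:=\int_{\omega_j}y_i\,dx$. By the comparison principle $G(u_-)<0$ and $y_1,y_2>0$ throughout $\Omega$, so $a_1,a_2<0$; moreover, since $u_{+,2}>u_{+,1}>0$, applying the comparison principle to the difference $y_2-y_1$ (which solves a linear equation with a nonnegative zeroth order coefficient, because $f$ is increasing) gives $y_2>y_1>0$ and hence $\overline{\lambda}>1$. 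These sign facts are used repeatedly.

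First I would show that $w:=y_2-\overline{\lambda}\,y_1$ is not identically zero. By the very definition of $\overline{\lambda}$ one has $\int_\Omega w\,dx=0$. Suppose, for contradiction, that $w\equiv 0$, i.e. $y_2=\overline{\lambda}\,y_1$. Subtracting $\overline{\lambda}$ times the equation for $y_1$ from the equation for $y_2$ and using $-\Delta y_2=\overline{\lambda}(-\Delta y_1)$ yields the pointwise identity $f(\overline{\lambda}\,y_1(x))=\overline{\lambda}\,f(y_1(x))$ for every $x\in\Omega$. Since $y_1$ is a nonconstant continuous function (the only constant solution being $0$, while $y_1=u_{+,1}\neq 0$ on $\partial\Omega$), its range is a nondegenerate interval $[m,M]\subset(0,\infty)$ by the intermediate value theorem (working on a connected component of $\Omega$ if necessary); hence $f(\overline{\lambda}\,t)=\overline{\lambda}\,f(t)$ for all $t\in[m,M]$. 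Differentiating once on $(m,M)$ gives $f'(\overline{\lambda}\,t)=f'(t)$. But $f^{\prime\prime}\neq 0$ on the connected set $(0,\infty)$ forces $f^{\prime\prime}$ to have constant sign there, so $f'$ is strictly monotone on $(0,\infty)$; as $\overline{\lambda}\,t>t>0$, this contradicts $f'(\overline{\lambda}\,t)=f'(t)$. Therefore $w\not\equiv 0$, and because $\int_\Omega w=0$, a function that is not a.e. zero must be strictly positive on a set of positive measure and strictly negative on another; these are exactly $\omega_2$ and $\omega_1$, so $|\omega_1|,|\omega_2|>0$.

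With this in hand I would close by linear algebra. Integrating the defining inequalities $y_2<\overline{\lambda}\,y_1$ on $\omega_1$ and $y_2>\overline{\lambda}\,y_1$ on $\omega_2$ over sets of positive measure gives the \emph{strict} inequalities $b_1^{(2)}<\overline{\lambda}\,b_1^{(1)}$ and $b_2^{(2)}>\overline{\lambda}\,b_2^{(1)}$, all four quantities being positive. Now suppose, to reach a contradiction, that $\Gamma$ is singular for both $i=1$ and $i=2$; that is, $a_1 b_2^{(1)}=a_2 b_1^{(1)}$ and $a_1 b_2^{(2)}=a_2 b_1^{(2)}$. Since $a_1,a_2<0$ and the $b_j^{(i)}>0$, dividing yields $a_1/a_2=b_1^{(1)}/b_2^{(1)}=b_1^{(2)}/b_2^{(2)}$, hence $b_1^{(1)}b_2^{(2)}=b_1^{(2)}b_2^{(1)}$. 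On the other hand, multiplying the two strict inequalities above by the appropriate positive factors gives $b_1^{(2)}b_2^{(1)}<\overline{\lambda}\,b_1^{(1)}b_2^{(1)}<b_1^{(1)}b_2^{(2)}$, contradicting that equality. Thus $\Gamma$ is invertible for at least one $i\in\{1,2\}$.

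The main obstacle is the middle step, the nondegeneracy of $\omega_1$ and $\omega_2$. Its entire content is the implication ``$f$ nonlinear $\Rightarrow$ $G(u_{+,2})$ is not proportional to $G(u_{+,1})$'', which is precisely where $f^{\prime\prime}\neq 0$ must be converted into a statement about solutions of the PDE. The reduction to the one-variable functional equation $f(\overline{\lambda}\,t)=\overline{\lambda}\,f(t)$ on an interval, killed off by the strict monotonicity of $f'$, is the cleanest route I see; the concluding determinant estimate is then routine.
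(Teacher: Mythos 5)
Your proof is correct, and it reaches the conclusion by a genuinely different --- and more elementary --- route than the paper. The paper's proof establishes the much stronger fact that for \emph{every} $\lambda\in\mathbb{R}$ the coincidence set $E_\lambda=\left\{x\in\Omega \ : \ y_2(x)=\lambda y_1(x)\right\}$ has Lebesgue measure zero; this is where all its work lies, and it relies on two auxiliary measure-theoretic lemmas (almost every point of a positive-measure set is an accumulation point along coordinate directions; on a coincidence set of two $C^2$ functions the gradients, hence Laplacians, agree a.e.), followed by a difference-quotient passage to the limit giving $f'(\lambda y_1(\hat x))=f'(y_1(\hat x))$ at an accumulation point $\hat x$, and Rolle's theorem to produce $\xi>0$ with $f''(\xi)=0$. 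You instead prove only what the determinant computation actually needs: that $y_2\not\equiv\overline{\lambda}\,y_1$, which reduces to the one-variable identity $f(\overline{\lambda}t)=\overline{\lambda}f(t)$ on a nondegenerate interval of values of $y_1$ and is killed by strict monotonicity of $f'$ (constant sign of $f''$ on $(0,\infty)$); then the zero-mean identity $\int_\Omega\bigl(y_2-\overline{\lambda}y_1\bigr)dx=0$, built into the very definition of $\overline{\lambda}$, together with continuity of $y_2-\overline{\lambda}y_1$, forces both $\omega_1$ and $\omega_2$ to be nonempty open sets, hence of positive measure. This neatly bypasses all of the paper's measure theory (and, incidentally, also the delicate difference-quotient step there). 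Your closing linear algebra --- equating the ratios $a_1/a_2=b_1^{(i)}/b_2^{(i)}$ for $i=1,2$ and contradicting the strict inequalities obtained by integrating $y_2<\overline{\lambda}y_1$ over $\omega_1$ and $y_2>\overline{\lambda}y_1$ over $\omega_2$ --- is essentially the paper's Steps 2--3, which route the same computation through invertibility of the auxiliary matrix $\Lambda$ with rows $\bigl(\int_{\omega_j}y_i\,dx\bigr)_j$. What the paper's heavier Step 1 buys is the extra information that $\Omega\setminus(\omega_1\cup\omega_2)$ is a null set, which is not needed for invertibility of $\Gamma$; what your argument buys is brevity and self-containedness, with no loss for the statement of the lemma.
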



\begin{proof}[Proof of \Cref{lemma_rank_matrix_boundary}]
To simplify the notation, we set $y_1 \coloneqq G\left(u_{+,1}\right)$ and $y_2\coloneqq G\left(u_{+,2}\right)$.\\
\textit{Step 1} \  \textbf{For any $\lambda \in \mathbb{R}$ the set
	\begin{equation*}\label{set_y2lambday1}
	E_{\lambda}\coloneqq \left\{x\in \Omega \ | \ y_2(x)=\lambda y_1(x)\right\}
	\end{equation*}
	has Lebesgue measure zero.}\\
We start with the case $\lambda \leq 1$. By the strong maximum principle \cite[Theorem 8.19 page 198]{EPG}, for any $x\in \Omega$, $G\left(u_{+,2}\right)(x)> y_1(x)$. Hence, for any $\lambda \leq 1$, the set $E_{\lambda}$ defined in \cref{set_y2lambday1} is empty.

We conclude Step 1, with the case $\lambda >1$. Suppose, by contradiction, that $E_{\lambda}$ has strictly positive Lebesgue measure. For any $x\in \Omega$, we have
\begin{equation}\label{lemma_molt_eq2}
-\Delta y_1(x) + f\left( y_1(x)\right)=0
\end{equation}
and
\begin{equation}\label{lemma_molt_eq3}
-\Delta y_2(x) + f\left(y_2(x)\right)=0.
\end{equation}
By definition \cref{set_y2lambday1}, for any $x\in E_{\lambda}$, $y_2(x)=\lambda y_1(x)$, whence by \Cref{lemma_molt_eq3} and \Cref{lemma_funtion_grad} applied twice, we get a.e. in $E_{\lambda}$
\begin{equation}\label{lemma_molt_eq4}
-\lambda\Delta y_1(x) + f\left( \lambda y_1(x)\right)=0.
\end{equation}
Multiplying \cref{lemma_molt_eq2} by $\lambda$, we have
\begin{equation}\label{lemma_molt_eq6}
-\lambda\Delta y_1(x) + \lambda f\left( y_1(x)\right)=0.
\end{equation}
By subtracting \cref{lemma_molt_eq4} and \cref{lemma_molt_eq6}, we obtain
\begin{equation}\label{lemma_molt_eq8}
f\left( \lambda y_1(x)\right)=\lambda f\left(  y_1(x)\right),\hspace{2.8 cm} \hspace{0.10 cm}\mbox{a.e.} \ x\in E_{\lambda}.
\end{equation}
Now, we have supposed that $E_{\lambda}$ has a positive Lebesgue measure. Hence, by \Cref{lemma_posmeasndir}, there exists an accumulation point $\hat{x}\in \Omega$ and a corresponding sequence $\left\{x_m\right\}_{m\in\mathbb{N}}\subset E_{\lambda}$ such that
\begin{equation}
x_m\underset{m\to +\infty}{\longrightarrow}\hat{x}.
\end{equation}
Now, by \cref{lemma_molt_eq8}, we have
\begin{equation}\label{lemma_molt_eq9}
f\left( \lambda y_1\left(x_m\right)\right)=\lambda f\left(  y_1\left(x_m\right)\right),\hspace{2.8 cm} \hspace{0.10 cm}\forall m\in \mathbb{N}.
\end{equation}
Since $u_i\in C^0\left(\partial \Omega\right)$, it follows that $y_i\in H^1\left(\Omega\right)\cap C^0\left(\overline{\Omega}\right)$ by virtue of \Cref{prop_exuniqellipticsemilinear}. Then, taking the limit as $m\to +\infty$ in the above expression, we get
\begin{equation}\label{lemma_molt_eq10}
f\left( \lambda y_1\left(\hat{x}\right)\right)=\lambda f\left(  y_1\left(\hat{x}\right)\right).
\end{equation}
Hence, by \cref{lemma_molt_eq9} and \cref{lemma_molt_eq10}, we have
\begin{equation}
\frac{f(\lambda y_1(x_m))-f(\lambda y_1(\hat{x}))}{\lambda y_1(x_m)-\lambda y_1(\hat{x})}=\frac{\lambda f( y_1(x_m))-\lambda f( y_1(\hat{x}))}{\lambda y_1(x_m)-\lambda y_1(\hat{x})}.
\end{equation}
Taking the limit as $m\to +\infty$ in both sides, using the continuity of $y_1$ we get $f^{\prime}(\lambda y_1(\hat{x}))=f^{\prime}(y_1(\hat{x}))$. Now, by \cite[Theorem 8.19 page 198]{EPG}, $y_1(\hat{x})>0$. Hence by Rolle Theorem applied to $f^{\prime}$, there exists $\xi>0$ such that
\begin{equation}
f^{\prime\prime}(\xi)=0,
\end{equation}
so obtaining a contradiction with assumptions. This finishes Step 1.

Set now
\begin{equation*}
\Lambda\coloneqq \begin{tikzpicture}[baseline=(current bounding box.center)]
\matrix (Lambda) [matrix of math nodes,nodes in empty cells,right delimiter={]},left delimiter={[} ]{
	\mathlarger{\int_{\omega_1}}G\left(u_{+,1}\right)dx\hspace{0.3 cm}&\mathlarger{\int_{\omega_2}}G\left(u_{+,1}\right)dx     \\
	\mathlarger{\int_{\omega_1}}G\left(u_{+,2}\right)dx&\mathlarger{\int_{\omega_2}}G\left(u_{+,2}\right) dx  \\
} ;
\end{tikzpicture}
\end{equation*}
\textit{Step 2} \  \textbf{$\Omega\setminus \left[\omega_1\cup\omega_2\right]$ has Lebesgue measure zero and the matrix $\Lambda$ is invertible.}\\
By the above reasoning, the set $E_{\overline{\lambda}}=\Omega\setminus \left[\omega_1\cup\omega_2\right]$ has Lebesgue measure zero. Now, by the strong maximum principle, $y_1$ and $y_2$ are strictly positive in $\Omega$ and $\overline{\lambda}\neq 0$. Hence,
\begin{eqnarray*}
	\det\left(\Lambda\right)&=&\mathlarger{\int_{\omega_1}}y_1dx\mathlarger{\int_{\omega_2}}y_2 dx-\mathlarger{\int_{\omega_1}}y_2dx\mathlarger{\int_{\omega_2}}y_1dx\nonumber\\
	&>&\overline{\lambda}\mathlarger{\int_{\omega_1}}y_1dx\mathlarger{\int_{\omega_2}}y_1 dx-\overline{\lambda}\mathlarger{\int_{\omega_1}}y_1dx\mathlarger{\int_{\omega_2}}y_1dx=0.
\end{eqnarray*}
\textit{Step 3} \  \textbf{Conclusion}\\
Let us assume, by contradiction, that the matrix $\Gamma$ is not invertible. Then, for $i=1,2$, there exists $\lambda_i \in\mathbb{R}$ such that
\begin{equation}\label{contradd_PDE_boundary}
\begin{tikzpicture}[baseline=(current bounding box.center)]
\matrix (Gamma) [matrix of math nodes,nodes in empty cells,right delimiter={]},left delimiter={[} ]{
	\mathlarger{\int_{\omega_1}}G(u_{+,i})dx     \\
	\mathlarger{\int_{\omega_2}}G(u_{+,i}) dx  \\
} ;
\end{tikzpicture}=\lambda_i \begin{tikzpicture}[baseline=(current bounding box.center)]
\matrix (c1) [matrix of math nodes,nodes in empty cells,right delimiter={]},left delimiter={[} ]{
	\mathlarger{\int_{\omega_1}}G(u_{-})dx     \\
	\mathlarger{\int_{\omega_2}}G(u_{-})dx \\
} ;
\end{tikzpicture}.
\end{equation}
Since the controls are nonzero constants, by \cite[Theorem 8.19 page 198]{EPG}, all the above integrals do not vanish, whence $\lambda_i \neq 0$. Then, we have
\begin{equation}\label{contradd_PDE_boundary_2}
\begin{tikzpicture}[baseline=(current bounding box.center)]
\matrix (Gamma) [matrix of math nodes,nodes in empty cells,right delimiter={]},left delimiter={[} ]{
	\mathlarger{\int_{\omega_1}}G(u_{+,2})dx     \\
	\mathlarger{\int_{\omega_2}}G(u_{+,2}) dx  \\
} ;
\end{tikzpicture}=\lambda_2 \begin{tikzpicture}[baseline=(current bounding box.center)]
\matrix (c1) [matrix of math nodes,nodes in empty cells,right delimiter={]},left delimiter={[} ]{
	\mathlarger{\int_{\omega_1}}G(u_{-})dx     \\
	\mathlarger{\int_{\omega_2}}G(u_{-})dx \\
} ;
\end{tikzpicture}=\frac{\lambda_2}{\lambda_1} \begin{tikzpicture}[baseline=(current bounding box.center)]
\matrix (c1) [matrix of math nodes,nodes in empty cells,right delimiter={]},left delimiter={[} ]{
	\mathlarger{\int_{\omega_1}}G(u_{+,1})dx     \\
	\mathlarger{\int_{\omega_2}}G(u_{+,1})dx \\
} ;
\end{tikzpicture}.
\end{equation}
By \cref{contradd_PDE_boundary_2}, the matrix $\Lambda$ is not invertible, so obtaining a contradiction with Step 3.
\end{proof}

\begin{proof}[Proof of \Cref{th_nouniq_bound}.]
\textit{Step 1} \ \textbf{Reduction to constant controls.}\\
Suppose for some radial target $z$, the optimal control is not constant. Then, by \Cref{lemma_noncostcontrol}, there exists an orthogonal matrix $M$, such that $u\circ M\neq u$. Now,
\begin{align}\label{inv_rotation_boundary}
I\left(u\circ M,z\right)&=\frac12\int_{\partial B(0,R)}|u\circ M|^2d\sigma(x)+\frac{\beta}{2}\int_{B(0,R)} |G(u\circ M)|^2dx\nonumber\\
&\;\hspace{0.33 cm}-\beta\int_{B(0,R)} G(u\circ M)zdx\nonumber\\
&=\frac12\int_{\partial B(0,R)}|u|^2d\sigma(x)+\frac{\beta}{2}\int_{B(0,R)} |G(u)|^2dx-\beta\int_{B(0,R)} G(u)zdx\\
&=I(u,z)\nonumber,
\end{align}
where in \cref{inv_rotation_boundary} we have employed the change of variable $\gamma(x)=Mx$ and \Cref{lemma_rot}. Then, $u$ and $u\circ M$ are two distinguished global minimizers for $I\left(\cdot,z\right)$, as desired. It remains to prove the nonuniqueness in case, for any radial targets, all the optimal controls are constants.\\
\textit{Step 2} \ \textbf{Existence of a special target $z^0\in L^{\infty}(B(0,R))$ such that $I(\cdot,z^0)$ admits (at least) two local minimizers among constant controls.}\\
By \Cref{lemma_rank_matrix_boundary}, there exists two controls $u_{-}<0<u_{+}$, such that \cref{def_Gamma_boundary} is invertible. We start proving the existence of a special target $z^0\in L^{\infty}(B(0,R))$ such that $I(u_{-},z^0)<0$ and $I(u_{+},z^0)<0$.


For an arbitrary target $z^0\in L^{\infty}(B(0,R))$, we have $I(u_{-},z^0)<0$ and $I(u_{+},z^0)<0$ if and only if the following system of inequalities is fulfilled:
\begin{equation}\label{th_nouniq_bound_eq6}
\begin{dcases}
\beta\mathlarger{\int_{B(0,R)}}G(u_{-})z^0 dx>\frac{R^{n-1}n\alpha(n)}{2}|u_{-}|^2+\frac{{\beta}}{2}\mathlarger{\int_{B(0,R)}}|G(u_{-})|^2dx 
\vspace{0.19 cm}\\
\beta\mathlarger{\int_{B(0,R)}}G(u_{+})z^0 dx>\frac{R^{n-1}n\alpha(n)}{2}|u_{+}|^2+\frac{{\beta}}{2}\mathlarger{\int_{B(0,R)}}|G(u_{+})|^2dx ,
\end{dcases}
\end{equation}
where $G$ is the control-to-state map introduced in \cref{control_to_state_PDE_boundary} and $\alpha(n)$ is the volume of the unit ball in $\mathbb{R}^n$. In the sequel, we work with \textit{changing-sign} targets
\begin{equation*}
z^0\coloneqq \begin{dcases}
z^0_1 \hspace{0.6 cm} &\mbox{in}\ \omega_1\\
z^0_2 \hspace{0.6 cm} &\mbox{in} \ \omega_2,
\end{dcases}
\end{equation*}
where $(z^0_1,z^0_2)\in \mathbb{R}^2$ and $\omega_1$ and $\omega_2$ are defined in \cref{lemma_molt_defomega1} and \eqref{lemma_molt_defomega2} respectively. $(z^0_1,z^0_2)$ are degrees of freedom we need in the remainder of the proof.
With the above choice of the target, inequalities \cref{th_nouniq_bound_eq6} are satisfied if the target $\left(z^0_1,z^0_2\right)$ satisfies the linear system below
\begin{equation}\label{th_nouniq_bound_eq9}
\begin{dcases}
z^0_1\beta\mathlarger{\int_{\omega_1}}G(u_{-})dx+z^0_2\beta\mathlarger{\int_{\omega_2}}G(u_{-})dx=c_1
\vspace{0.19 cm}\\
z^0_1\beta\mathlarger{\int_{\omega_1}}G(u_{+})dx+z^0_2\beta\mathlarger{\int_{\omega_2}}G(u_{+})dx=c_2,
\end{dcases}
\end{equation}
with constant terms
\begin{equation*}
c_1\coloneqq \frac{R^{n-1}n\alpha(n)}{2}|u_{-}|^2+\frac{{\beta}}{2}\mathlarger{\int_{B(0,R)}}|G(u_{-})|^2dx+1
\end{equation*}
and
\begin{equation*}
c_2\coloneqq \frac{R^{n-1}n\alpha(n)}{2}|u_{+}|^2+\frac{{\beta}}{2}\mathlarger{\int_{B(0,R)}}|G(u_{+})|^2dx +1.
\end{equation*}
The $2\times 2$ coefficients matrix for the above linear system reads as:
\begin{equation*}\label{def_Gamma_boundary_3}
\Gamma=\beta\begin{tikzpicture}[baseline=(current bounding box.center)]
\matrix (Gamma) [matrix of math nodes,nodes in empty cells,right delimiter={]},left delimiter={[} ]{
	\mathlarger{\int_{\omega_1}}G(u_{-})dx\hspace{0.3 cm}&\mathlarger{\int_{\omega_2}}G(u_{-})dx     \\
	\mathlarger{\int_{\omega_1}}G(u_{+})dx&\mathlarger{\int_{\omega_2}}G(u_{+}) dx  \\
} ;
\end{tikzpicture}
\end{equation*}

By \cref{def_Gamma_boundary}, the matrix $\Gamma$ is invertible. Therefore, by Rouch\'e-Capelli Theorem, there exists a solution to the linear system \cref{th_nouniq_bound_eq9}. Such solution $(z^0_1,z^0_2)$ defines a special target
\begin{equation*}
z^0\coloneqq \begin{dcases}
z^0_1 \hspace{0.6 cm} &\mbox{in}\ \omega_1\\
z^0_2 \hspace{0.6 cm} &\mbox{in} \ \omega_2,
\end{dcases}
\end{equation*}
such that $I(u_{-},z^0)<0$ and $I(u_{+},z^0)<0$.

We show now that $I\left(\cdot,z^0\right)$ admits (at least) two local minimizers. Indeed, by \Cref{lemma3_boundary} (1.), there exist:
\begin{equation*}
u_1\leq 0\hspace{ 0.5 cm}\mbox{such that}\hspace{0.5 cm}I(u_1,z^0)=\inf\left\{I(u,z) \ | \ u\equiv k, \ k\leq 0\right\}
\end{equation*}
and
\begin{equation*}
u_2\geq 0\hspace{0.5 cm}\mbox{such that}	\hspace{0.5 cm}I(u_2,z^0)=\inf\left\{I(u,z) \ | \ u\equiv k, \ k\geq 0\right\}.
\end{equation*}
Now,
\begin{equation*}
I(u_1,z^0)=\inf\left\{I(u,z) \ | \ u\equiv k, \ k\leq 0\right\}\leq I(u_{-},z^0)<0=I(0,z^0)
\end{equation*}
and
\begin{equation*}
I(u_2,z^0)=\inf\left\{I(u,z) \ | \ u\equiv k, \ k\geq 0\right\}\leq I(u_{+},z^0)<0=I(0,z^0).
\end{equation*}
Then, the control $u_1$ minimizes $I(\cdot,z^0)$ in the half line $\left(-\infty,0\right)$, while $u_2$ minimizes $I(\cdot,z^0)$ in the half line $\left(0,+\infty\right)$. We have found $u_1$ and $u_2$ two distinct local minimizers for $I(\cdot,z^0)$ in $\mathbb{R}$.\\
\textit{Step 3} \ \textbf{Conclusion}\\
We remind the definition of $h_1$ and $h_2$ given by \cref{def_h_1_PDE_boundary} and \cref{def_h_2_PDE_boundary} resp. In Step 2, we have determined $z^0\in L^{\infty}(B(0,R))$ such that $h_1(z^0)<0$ and $h_2(z^0)<0$. To finish our proof it suffices to find $\tilde{z}\in\mathbb{R}^n$ such that $h_1(\tilde{z})=h_2(\tilde{z})<0$. This follows from \Cref{lemma4_boundary}.
\end{proof}

\subsection{Internal control}
\label{subsec:nouniq.internal}

We introduce the well-known concept of radial control.
\begin{definition}\label{Def_radial_controlinternal}
A control $u:B(0,r)\longrightarrow \mathbb{R}$ is said to be radial if there exists $\psi:[0,r]\longrightarrow \mathbb{R}$, such that, for any $x\in B(0,r)$, we have $u(x)=\psi(\|x\|)$.
\end{definition}

Our strategy to prove \Cref{th_nouniq_int} resembles the one of \Cref{th_nouniq_bound}, except for Step 1, which consists now in a reduction to the radial controls instead of constant controls.

We define the control-to-state map
\begin{equation}\label{control_to_state_PDEinternal}
G:L^2(B(0,r))\longrightarrow L^2(B(0,R))
\end{equation}
\begin{equation*}
u\longmapsto y_u,
\end{equation*}
where $y_u$ is the solution to \cref{semilinear_internal_elliptic_1} with control $u$. Then, set:
\begin{equation}\label{def_functional_control_target_PDEinternal}
I:L^2(B(0,r))\times L^{\infty}(B(0,R)\setminus B(0,r))\longrightarrow \mathbb{R}
\end{equation}
\begin{equation*}
I(u,z)\coloneqq \frac12\int_{B(0,r)}|u|^2dx+\frac{\beta}{2}\int_{B(0,R)\setminus B(0,r)} |G(u)|^2dx-\beta\int_{B(0,R)\setminus B(0,r)} G(u)zdx,
\end{equation*}
where $G$ is the control-to-state map introduced in \cref{control_to_state_PDEinternal}. One recognizes that, for any target $z\in L^{\infty}(B(0,R)\setminus B(0,r))$, $I(\cdot,z)+\frac{{\beta}}{2} \|z\|_{L^2(B(0,R)\setminus B(0,r))}^2$ coincides with the functional $J$ defined in \cref{functionalinternal} with target $z$. Then, for any target $z\in L^{\infty}(B(0,R)\setminus B(0,r))$ minimizing $I(\cdot, z)$ is equivalent to minimizing $J$ with target $z$. Such translation is convenient, because $I(0,z)=0$ for any target $z\in L^{\infty}(B(0,R)\setminus B(0,r))$.

We establish some important properties of the solutions of the state equation \cref{semilinear_internal_elliptic_1}:
\begin{itemize}
\item The unique constant solution of the equation $-\Delta y+f(y) = 0$ in $B(0,R)$, with $y=0$ on $\partial B(0,R)$ is $y \equiv 0$ (\Cref{lemma_nonconst_internal}). In particular, $G(u) = 0$ if and only if $u = 0$ holds.
\item By comparison principle,
if $u \geq 0$ in $B(0,r)$ and $u \not\equiv 0$, then $G(u)(x) > 0$ in $B(0,R)$.
\item By comparison principle,
if $u \leq 0$ in $B(0,r)$ and $u \not\equiv 0$, then $G(u)(x) < 0$ in $B(0,R)$.
\end{itemize}

We define
\begin{equation}\label{Def_U_r}
\mathscr{U}_r\coloneqq \left\{u\in L^2\left({B(0,r)}\right) \ | \ \mbox{$u$ is radial} \right\}.
\end{equation}
We have
\begin{equation}
\mathscr{U}_r = \mathscr{U}_r^- \cup \mathscr{U}_r^-,
\end{equation}
with
\begin{align}
\mathscr{U}_r^-&\coloneqq \left\{u\in \mathscr{U}_r \ \big| \ G(u)\hspace{-0.1 cm}\restriction_{\partial B(0,r)}\leq 0\right\}\nonumber\\
\mathscr{U}_r^+&\coloneqq \left\{u\in \mathscr{U}_r \ \big| \ G(u)\hspace{-0.1 cm}\restriction_{\partial B(0,r)}\geq  0\right\}.
\end{align}

We introduce:
\begin{equation}\label{def_h_1_PDEinternal}
h_1:L^{\infty}(B(0,R)\setminus B(0,r))\longrightarrow \mathbb{R},\hspace{0.3 cm}  h_1(z)\coloneqq\inf\left\{I(u,z) \ | \ u\in \mathscr{U}_r^-\right\}
\end{equation}
and
\begin{equation}\label{def_h_2_PDEinternal}
h_2:L^{\infty}(B(0,R)\setminus B(0,r))\longrightarrow \mathbb{R},\hspace{0.3 cm} h_2(z)\coloneqq\inf\left\{I(u,z) \ | \ u\in \mathscr{U}_r^+\right\}.
\end{equation}

We formulate the first Lemma.

\begin{lemma}\label{lemma3_internal}
Let $C= \mathscr{U}_r^-$ or $C= \mathscr{U}_r^+$. Then,
\begin{enumerate}
	\item for any $z\in L^{\infty}(B(0,R)\setminus B(0,r))$, there exists $u_{z}\in C$ such that:
	\begin{equation*}
	I(u_{z},z)=\inf_{C}[I(\cdot,z)].
	\end{equation*}
	Furthermore, for any minimizer $u_z$
	\begin{equation*}
	\|u_{z}\|_{L^2(B(0,r))}\leq \sqrt{\beta}\|z\|_{L^2}.
	\end{equation*}
	\item the map
	\begin{equation*}
	h:L^{\infty}(B(0,R)\setminus B(0,r))\longrightarrow \mathbb{R}
	\end{equation*}
	\begin{equation*}
	z\longmapsto \inf_{C}\left[I(\cdot, z)\right]
	\end{equation*}
	is continuous.
\end{enumerate}
\end{lemma}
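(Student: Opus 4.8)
The plan is to follow the scheme of \Cref{lemma3_boundary}, the only change being that the admissible set is now one of the radial families $\mathscr{U}_r^-,\mathscr{U}_r^+$ rather than a half-line of constant controls. Throughout I abbreviate $B_r=B(0,r)$ and $A=B(0,R)\setminus B(0,r)$. I would first establish the \emph{a priori bound}. Since $0\in C$ and $I(0,z)=0$, any minimizer $u_z$ satisfies $I(u_z,z)\le 0$; expanding and applying Cauchy--Schwarz to the cross term gives
\[
\tfrac12\|u_z\|_{L^2(B_r)}^2+\tfrac{\beta}{2}\|G(u_z)\|_{L^2(A)}^2\le \beta\int_A G(u_z)\,z\,dx\le \beta\|G(u_z)\|_{L^2(A)}\|z\|_{L^2(A)}.
\]
Completing the square in $\|G(u_z)\|_{L^2(A)}$ on the right absorbs the second term on the left and yields $\tfrac12\|u_z\|^2\le\tfrac{\beta}{2}\|z\|^2$, i.e. the claimed estimate; dropping instead the $\|u_z\|^2$ term gives the auxiliary bound $\|G(u_z)\|_{L^2(A)}\le 2\|z\|_{L^2(A)}$, which I will reuse in part (2).

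For \emph{existence} (part 1) I would run the Direct Method. The same inequality shows $I(u,z)\ge \tfrac12\|u\|^2-\tfrac{\beta}{2}\|z\|^2$ on all of $C$, so $I(\cdot,z)$ is bounded below and every minimizing sequence $\{u_n\}\subset C$ is bounded in $L^2(B_r)$; I pass to a weakly convergent subsequence $u_n\rightharpoonup u^\ast$. The decisive ingredient is the sequential weak-to-strong continuity of the control-to-state map: testing \cref{semilinear_internal_elliptic_1} with $G(u_n)$ and using $f(s)s\ge0$ (from $f$ strictly increasing with $f(0)=0$) bounds $\{G(u_n)\}$ in $H^1_0(B(0,R))$; the compact Sobolev embedding then yields a strongly $L^2$-convergent subsequence, and passing to the limit in the weak formulation (continuity of $f$ together with the well-posedness of \cite{boccardo2013elliptic}) identifies the limit as $G(u^\ast)$, so $G(u_n)\to G(u^\ast)$ strongly in $L^2(B(0,R))$. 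Strong convergence makes the terms $\tfrac{\beta}{2}\|G(u_n)\|^2$ and $-\beta\int_A G(u_n)z$ converge, while $\tfrac12\|u_n\|^2$ is weakly lower semicontinuous, so $I(u^\ast,z)\le\liminf_n I(u_n,z)=\inf_C I(\cdot,z)$. To see $u^\ast\in C$: radial functions form a closed subspace of $L^2(B_r)$, hence are weakly closed, and since $G(u_n)\to G(u^\ast)$ in $H^1$ the traces on $\partial B_r$ converge in $L^2(\partial B_r)$, so the closed sign condition $G(u)\restriction_{\partial B_r}\le 0$ (resp. $\ge0$) is inherited by $u^\ast$. Thus $u^\ast\in C$ is a minimizer.

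For \emph{continuity} (part 2) I would exploit that $I$ is affine in $z$: for fixed $u$,
\[
|I(u,z_1)-I(u,z_2)|\le\beta\|G(u)\|_{L^2(A)}\|z_1-z_2\|_{L^2(A)}.
\]
Choosing a minimizer $u_{z_1}$ for $z_1$ and invoking the auxiliary bound $\|G(u_{z_1})\|\le2\|z_1\|$ gives $h(z_2)-h(z_1)\le I(u_{z_1},z_2)-I(u_{z_1},z_1)\le 2\beta\|z_1\|\,\|z_1-z_2\|$; exchanging the roles of $z_1$ and $z_2$ yields the reverse inequality, so $h$ is locally Lipschitz and in particular continuous.

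The hard part will be the existence step, and specifically the weak closedness of $C$: this set is not convex, being cut out by a sign condition on the \emph{nonlinear} quantity $G(u)\restriction_{\partial B_r}$, so its closure under weak limits is not automatic and must be extracted from the compactness of $G$. Once that compactness---resting on the $H^1_0$ energy estimate, the compact embedding $H^1_0\hookrightarrow L^2$, and the stability of the nonlinear term $f$ under strong convergence---is secured, everything else is the routine Direct Method together with the affine dependence on $z$.
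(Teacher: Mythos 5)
Your proof is correct and follows the same skeleton as the paper's: the a priori bound comes from comparison with the zero control (your Cauchy--Schwarz/Young expansion is just a rewriting of the paper's one-line inequality $\tfrac12\|u_z\|^2\le I(u_z,z)+\tfrac{\beta}{2}\|z\|^2\le I(0,z)+\tfrac{\beta}{2}\|z\|^2$), existence comes from the Direct Method, and continuity comes from evaluating $I(\cdot,z_2)$ at a minimizer for $z_1$ and exchanging roles. Where you genuinely add content is the existence step. The paper only proves \Cref{lemma3_boundary} in detail---there $C$ is a half-line of \emph{constant} controls, so ``direct methods'' reduce to continuity and coercivity of a function of one real variable---and then disposes of \Cref{lemma3_internal} with the remark that its proof ``resembles'' that one. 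In the internal case $C=\mathscr{U}_r^{\pm}$ is infinite-dimensional and nonconvex, and the two difficulties you isolate (weak-to-strong sequential continuity of $G$, weak closedness of $C$) are exactly what that remark leaves implicit; your $H^1_0$ energy bound, compact embedding, and trace argument supply the missing substance. Your continuity step is also slightly sharper than the paper's: a local Lipschitz bound with explicit constant $2\beta\max\left(\|z_1\|,\|z_2\|\right)$ instead of a qualitative $\varepsilon$--$\delta$ argument.

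Two small repairs are needed, neither fatal. First, in the closedness argument you invoke convergence of $G(u_n)\to G(u^\ast)$ ``in $H^1$'', but your compactness step only produced a bound in $H^1_0$ (hence weak $H^1$ convergence of a subsequence) together with strong $L^2$ convergence. This is harmless: the trace operator from $H^1$ of the annulus into $L^2(\partial B(0,r))$ is compact, so weak $H^1$ convergence already gives strong convergence of the traces and the sign condition passes to the limit; alternatively, testing the equation satisfied by $G(u_n)-G(u^\ast)$ with $G(u_n)-G(u^\ast)$ and using monotonicity, $\left(f(y_n)-f(y^\ast)\right)\left(y_n-y^\ast\right)\ge 0$, upgrades the convergence to strong $H^1_0$. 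Second, ``passing to the limit in the weak formulation'' in the term $\int f(y_n)\varphi\,dx$ requires uniform integrability of $f(y_n)$, which does \emph{not} follow from continuity of $f$ and $L^2$ bounds alone, since $f$ carries no growth restriction. It does follow from a comparison argument: $|y_n|\le \hat{y}_n$ where $-\Delta \hat{y}_n=|u_n|\chi_{B(0,r)}$ with homogeneous Dirichlet data, and $\|\hat{y}_n\|_{L^{\infty}}\le K\|u_n\|_{L^2}$ by elliptic regularity in dimension $n\le 3$; hence the states are uniformly bounded, $f(y_n)$ is dominated, and a.e.\ convergence concludes. With these two adjustments the argument is complete.
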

The proof of \Cref{lemma3_internal} resembles the one of \Cref{lemma3_boundary}, available in \cref{sec:appendix.Preliminaries for boundary control}. We now state the second lemma needed to prove \Cref{th_nouniq_int}.

\begin{lemma}\label{lemma4_internal}
Assume there exists $z^0\in L^{\infty}(B(0,R)\setminus B(0,r))$ such that
\begin{equation*}
h_1(z^0)<0\hspace{1 cm}\mbox{and}\hspace{1 cm}h_2(z^0)<0,
\end{equation*}
where $h_1$ and $h_2$ are defined in \cref{def_h_1_PDEinternal} and \cref{def_h_2_PDEinternal} resp.
Then, there exists $\tilde{z}\in L^{\infty}(B(0,R)\setminus B(0,r))$ such that
\begin{equation*}
h_1(\tilde{z})=h_2(\tilde{z})<0.
\end{equation*}
\end{lemma}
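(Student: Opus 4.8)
The plan is to prove \Cref{lemma4_internal} by a continuity-and-intermediate-value argument on a one-parameter family of targets interpolating between a target favoring the negative branch and one favoring the positive branch. Observe first that by \Cref{lemma3_internal}, both $h_1$ and $h_2$ are continuous functions of the target $z\in L^{\infty}(B(0,R)\setminus B(0,r))$, and that each infimum is attained. The key quantity to track is the difference $g(z)\coloneqq h_1(z)-h_2(z)$, which is continuous by continuity of $h_1$ and $h_2$. The goal is to find $\tilde z$ along a suitable path with $g(\tilde z)=0$ while simultaneously keeping both $h_1(\tilde z)<0$ and $h_2(\tilde z)<0$.

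The construction of the path exploits the sign asymmetry of the two admissible sets $\mathscr{U}_r^-$ and $\mathscr{U}_r^+$. Starting from $z^0$ (for which $h_1(z^0)<0$ and $h_2(z^0)<0$), I would consider perturbing the target in a direction that penalizes one branch while leaving the other advantageous. Concretely, for a negative control $u\in\mathscr{U}_r^-$ one has $G(u)\leq 0$ in $B(0,R)\setminus B(0,r)$, whereas for $u\in\mathscr{U}_r^+$ one has $G(u)\geq 0$; hence replacing $z^0$ by $z^0+t\,w$ for an appropriate fixed sign profile $w$ and $t\in\mathbb{R}$ shifts the linear term $-\beta\int G(u)z\,dx$ oppositely on the two branches. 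The natural choice is to let $t\to+\infty$ so that the $h_2$-branch becomes much more favorable (its infimum tends to $-\infty$) while $h_1$ stays bounded below near $h_1(z^0)$, forcing $g(z^0+t\,w)=h_1-h_2>0$ for $t$ large; and symmetrically $g(z^0-t\,w)<0$ for $t$ large. Then the intermediate value theorem applied to the continuous scalar function $t\mapsto g(z^0+t\,w)$ yields some $t^*$ with $g(z^0+t^*w)=0$, i.e. $h_1=h_2$ at $\tilde z\coloneqq z^0+t^*w$.

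The remaining point is to guarantee $h_1(\tilde z)<0$ and $h_2(\tilde z)<0$ at the crossing, not merely equality. Here I would use monotonicity: since $I(0,z)=0$ lies in both $\overline{\mathscr{U}_r^-}$ and $\overline{\mathscr{U}_r^+}$ (the zero control is radial and its state has zero boundary trace on $\partial B(0,r)$), one always has $h_1(z)\leq 0$ and $h_2(z)\leq 0$ for every target $z$. Moreover the strict inequalities are preserved along the path near $z^0$, and the crossing value $t^*$ can be chosen in the range where at least one of $h_1,h_2$ stays strictly negative; by the equality $h_1=h_2$ at $t^*$, both are then strictly negative. Making this quantitative—controlling the window of $t$ in which strict negativity persists while the sign of $g$ flips—is the part requiring care.

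The main obstacle I anticipate is the last step: ensuring the sign change of $g$ occurs strictly before either infimum returns to $0$. If the branch whose infimum was being driven toward $-\infty$ were to recover too slowly, the crossing could a priori happen where one $h_i$ has already reached $0$. I would overcome this by exploiting the uniform bound $\|u_z\|_{L^2}\le\sqrt{\beta}\|z\|_{L^2}$ from \Cref{lemma3_internal}, which pins the minimizers in a ball and lets me estimate $h_i(z^0+t\,w)$ explicitly in $t$, showing that the favorable branch decreases strictly faster than the unfavorable one recovers, so that at the crossing point both values remain below a fixed negative threshold inherited from $h_1(z^0)$ and $h_2(z^0)$.
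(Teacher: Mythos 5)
Your overall strategy coincides with the paper's: shift the target along a one-parameter family $z^0+t\,w$ with $w$ of fixed sign (the paper takes $w\equiv 1$), use the continuity of $h_1,h_2$ from \Cref{lemma3_internal}, and apply the intermediate value theorem to $h_1-h_2$. However, the step you yourself flag as ``the main obstacle'' --- guaranteeing that the crossing occurs where both infima are still strictly negative --- is exactly where the write-up stops short, and the quantitative fix you propose (a rate comparison based on the bound $\|u_z\|_{L^2}\le\sqrt{\beta}\|z\|_{L^2}$) is both unproven and misdirected: that bound grows with $\|z\|_{L^2}$, it gives no control on how fast the unfavorable branch ``recovers'', and in fact no rate comparison is needed.

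The clean resolution, which is what the paper does, is a pointwise monotonicity observation. For each \emph{fixed} $u$ in the favorable branch, say $u\in\mathscr{U}_r^+$ when shifting with $t\ge 0$ and $w\ge 0$,
\begin{equation*}
I(u,z^0+t\,w)=I(u,z^0)-t\,\beta\int_{B(0,R)\setminus B(0,r)}G(u)\,w\,dx\;\le\;I(u,z^0),
\end{equation*}
because $G(u)\ge 0$ on the annulus. (Note this sign property does not hold because $u$ is ``a negative/positive control'', as your parenthetical suggests: membership in $\mathscr{U}_r^{\pm}$ is defined through the trace of $G(u)$ on $\partial B(0,r)$, and the sign of $G(u)$ on $B(0,R)\setminus B(0,r)$ then follows from the comparison principle on the annulus, where the state solves the source-free equation with zero outer boundary data.) Taking the infimum over $u\in\mathscr{U}_r^+$ gives $h_2(z^0+t\,w)\le h_2(z^0)<0$ for \emph{all} $t\ge 0$: the favorable branch never recovers at all, so no window of $t$ needs to be controlled. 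Hence, in the case $h_1(z^0)<h_2(z^0)$, any crossing point $t^*>0$ of $h_1-h_2$ automatically satisfies $h_1(z^0+t^*w)=h_2(z^0+t^*w)\le h_2(z^0)<0$; the opposite case is symmetric, with the roles of $h_1,h_2$ and the sign of the shift exchanged. With this one line your argument closes. The remaining difference from the paper is cosmetic: instead of sending $h_2\to-\infty$ along an infinite ray, the paper works on the compact interval $[0,\mu_0]$ with $\mu_0=\|z^0\|_{L^\infty}$, at whose right endpoint $z^0+\mu_0\ge 0$ forces $h_1(z^0+\mu_0)=0$ exactly, and applies the intermediate value theorem there; both variants are valid once the monotonicity of the favorable branch is in hand.
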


The above Lemma can be proved by following the arguments of \Cref{lemma4_boundary}, in \cref{sec:appendix.Preliminaries for boundary control}. The next lemma is the foundation of the proof of the existence of two local minimizers for \cref{functionalinternal}. The nonlinearity of the state equation \cref{semilinear_internal_elliptic_1} will play a key role in the proof.

\begin{lemma}\label{lemma_rank_matrix_internal}
Let $\Omega$ be a bounded open subset of $\mathbb{R}^n$, with $\partial \Omega \in C^{\infty}$ and $\omega \subsetneq \Omega$ a nonempty open subset. Let $u_{-}<0<u_{+,1}<u_{+,2}$ be three constant controls. For any $u\in L^{2}\left(\omega\right)$, let $G\left(u\right)$ be the solution to
\begin{equation}\label{semilinear_internal_elliptic_1_gdomain_wp_2}
\begin{dcases}
-\Delta y+f(y)=u\chi_{\omega}\hspace{2.8 cm} & \mbox{in} \hspace{0.10 cm}\Omega\\
y=0  & \mbox{on}\hspace{0.10 cm} \partial \Omega.
\end{dcases}
\end{equation}
Assume $f\in C^1\left(\mathbb{R}\right)\cap C^2\left(\mathbb{R}\setminus \left\{0\right\}\right)$ is strictly increasing and
\begin{equation*}
f^{\prime\prime}(y)\neq 0\hspace{1 cm}\forall \ y\neq 0.
\end{equation*}
Set
\begin{equation}
\overline{\lambda}\coloneqq \frac{\int_{\Omega}G\left(u_{+,2}\right)(x)dx}{\int_{\Omega}G\left(u_{+,1}\right)(x)dx},
\end{equation}
\begin{equation}\label{lemma_molt_defomega1_internal}
\omega_1\coloneqq \left\{x\in \Omega\setminus \omega \ | \ G\left(u_{+,2}\right)(x)<\overline{\lambda} \hspace{0.003 cm} G\left(u_{+,1}\right)(x)\right\}
\end{equation}
and
\begin{equation}\label{lemma_molt_defomega2_internal}
\omega_2\coloneqq \left\{x\in \Omega\setminus \omega \ | \ G\left(u_{+,2}\right)(x)>\overline{\lambda} \hspace{0.003 cm} G\left(u_{+,1}\right)(x)\right\}.
\end{equation}
There exist $i\in \left\{1,2\right\}$, such that
\begin{equation}\label{def_Gamma_internal}
\Gamma\coloneqq \beta\begin{tikzpicture}[baseline=(current bounding box.center)]
\matrix (Gamma) [matrix of math nodes,nodes in empty cells,right delimiter={]},left delimiter={[} ]{
	\mathlarger{\int_{\omega_1}}G(u_{-})dx\hspace{0.3 cm}&\mathlarger{\int_{\omega_2}}G(u_{-})dx     \\
	\mathlarger{\int_{\omega_1}}G(u_{+,i})dx&\mathlarger{\int_{\omega_2}}G(u_{+,i}) dx  \\
} ;
\end{tikzpicture}.
\end{equation}
is invertible.
\end{lemma}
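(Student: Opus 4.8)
The plan is to follow, step for step, the three-part argument used for \Cref{lemma_rank_matrix_boundary}, exploiting the one structural feature peculiar to internal control: since $\omega_1$ and $\omega_2$ are carved out of $\Omega\setminus\omega$, on them the source term $u\chi_\omega$ vanishes, so $y_1\coloneqq G(u_{+,1})$ and $y_2\coloneqq G(u_{+,2})$ satisfy there the \emph{homogeneous} equation $-\Delta y+f(y)=0$, exactly as in the boundary problem. All the pointwise manipulations of the boundary proof therefore remain legitimate on $\Omega\setminus\omega$, where, by interior elliptic regularity, $y_1,y_2$ are twice differentiable a.e. From the comparison and strong maximum principles \cite[Theorem 8.19 page 198]{EPG}, the ordered positive constant controls give $y_2>y_1>0$ in $\Omega$ and $G(u_-)<0$ in $\Omega$; in particular $\overline\lambda>0$ and each integral entering $\Gamma$ below is nonzero once the underlying set has positive measure.

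First I would show that for every $\lambda\in\mathbb R$ the coincidence set $E_\lambda\coloneqq\{x\in\Omega\setminus\omega\mid y_2(x)=\lambda y_1(x)\}$ is Lebesgue-null. For $\lambda\le 1$ this is immediate from $y_2>y_1>0$, which forces $\lambda y_1\le y_1<y_2$, so $E_\lambda=\emptyset$. For $\lambda>1$ I argue by contradiction, exactly as in the boundary case: if $|E_\lambda|>0$, then using $y_2=\lambda y_1$ on $E_\lambda$, \Cref{lemma_funtion_grad} to identify $\Delta y_2$ with $\lambda\Delta y_1$ a.e.\ on $E_\lambda$, and the two homogeneous equations, subtraction yields $f(\lambda y_1)=\lambda f(y_1)$ a.e.\ on $E_\lambda$; \Cref{lemma_posmeasndir} then supplies an accumulation point $\hat x$ with $E_\lambda\ni x_m\to\hat x$, and passing to the limit in the difference quotients gives $f'(\lambda y_1(\hat x))=f'(y_1(\hat x))$ with $y_1(\hat x)>0$. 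Since $\lambda>1$, Rolle's theorem produces $\xi>0$ with $f''(\xi)=0$, contradicting the hypothesis. This is the only place the nonlinearity is genuinely used, and it is the conceptual heart of the lemma.

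Taking $\lambda=\overline\lambda$, Step 1 shows that $(\Omega\setminus\omega)\setminus[\omega_1\cup\omega_2]=E_{\overline\lambda}$ is null, so $\omega_1,\omega_2$ partition the observation region up to a null set. The decisive new point, absent in the boundary setting, is to verify that \emph{both} $\omega_1$ and $\omega_2$ carry positive Lebesgue measure: otherwise a column of $\Gamma$ vanishes and the statement fails. I would secure this by reading $\overline\lambda$ as the mean ratio over the observation region $\Omega\setminus\omega$, so that $y_2-\overline\lambda y_1$ has zero mean there; being non-null by Step 1, it must then change sign on $\Omega\setminus\omega$, and hence $\omega_1=\{y_2<\overline\lambda y_1\}$ and $\omega_2=\{y_2>\overline\lambda y_1\}$ both have positive measure. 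With this in hand, the auxiliary matrix
\begin{equation*}
\Lambda\coloneqq\begin{pmatrix}\int_{\omega_1}y_1\,dx&\int_{\omega_2}y_1\,dx\\ \int_{\omega_1}y_2\,dx&\int_{\omega_2}y_2\,dx\end{pmatrix}
\end{equation*}
is invertible: inserting the strict pointwise bounds $y_2<\overline\lambda y_1$ on $\omega_1$ and $y_2>\overline\lambda y_1$ on $\omega_2$ and using $y_1,y_2>0$ gives $\det\Lambda>\overline\lambda\int_{\omega_1}y_1\int_{\omega_2}y_1-\overline\lambda\int_{\omega_1}y_1\int_{\omega_2}y_1=0$. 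I expect this positive-measure and sign-change verification to be the main obstacle, precisely because the observation region no longer coincides with $\Omega$.

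Finally I would conclude exactly as in \Cref{lemma_rank_matrix_boundary}: suppose, for contradiction, that $\Gamma$ is singular for both $i=1$ and $i=2$. As $G(u_-)<0$ and $G(u_{+,i})>0$ throughout $\Omega$, all four integrals over $\omega_1,\omega_2$ are nonzero, so singularity forces, for each $i$, a scalar $\lambda_i\neq 0$ with $\bigl(\int_{\omega_1}G(u_{+,i}),\int_{\omega_2}G(u_{+,i})\bigr)=\lambda_i\bigl(\int_{\omega_1}G(u_-),\int_{\omega_2}G(u_-)\bigr)$. Eliminating the $G(u_-)$ vector between $i=1$ and $i=2$ makes the two rows of $\Lambda$ proportional, contradicting its invertibility. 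Hence $\Gamma$ is invertible for at least one $i\in\{1,2\}$, as claimed.
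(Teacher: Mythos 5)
Your overall route is the same as the paper's: the paper's entire proof of \Cref{lemma_rank_matrix_internal} is a pointer to \Cref{lemma_rank_matrix_boundary} together with the one observation you also lead with, namely that on $\Omega\setminus\omega$ the source term vanishes, so $y_1=G(u_{+,1})$ and $y_2=G(u_{+,2})$ solve the homogeneous equation there and the boundary-case machinery (null coincidence sets via \Cref{lemma_funtion_grad}, \Cref{lemma_posmeasndir} and Rolle; the determinant estimate for $\Lambda$; the row-proportionality contradiction for $\Gamma$) transfers. Your Step 1 and your concluding step are correct renditions of that argument.

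The gap is in your Step 2, at exactly the point you yourself call decisive. You obtain that both $\omega_1$ and $\omega_2$ have positive measure ``by reading $\overline\lambda$ as the mean ratio over the observation region $\Omega\setminus\omega$'', so that $y_2-\overline\lambda y_1$ has zero mean there. But that is not the stated definition: the lemma defines $\overline\lambda$ through integrals over all of $\Omega$, while $\omega_1,\omega_2$ in \cref{lemma_molt_defomega1_internal}--\cref{lemma_molt_defomega2_internal} are carved out of $\Omega\setminus\omega$ only. With the stated definition, the zero-mean identity holds on $\Omega$, not on $\Omega\setminus\omega$, and Step 1 controls nothing inside $\omega$, where the equation is inhomogeneous; so the sign change of $y_2-\overline\lambda y_1$ on $\Omega\setminus\omega$ does not follow. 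It can genuinely fail: for $f(y)=y^3$, $\Omega=(-1,1)$, $\omega=(-r,r)$ with $1-r$ small and $u_{+,1},u_{+,2}$ large, one has $y_i\approx u_{+,i}^{1/3}$ on the bulk of $\omega$, hence $\overline\lambda\approx(u_{+,2}/u_{+,1})^{1/3}$, while an energy/matching computation on the thin shell gives $y_2/y_1\approx(u_{+,2}/u_{+,1})^{2/3}>\overline\lambda$ there; then $\omega_1$ is essentially empty, a column of $\Gamma$ vanishes, and $\Gamma$ is singular for both $i$. In other words, your reading of $\overline\lambda$ is the definition under which the lemma and your proof are correct, and it quietly repairs an oversight of the paper, which never verifies $|\omega_1|,|\omega_2|>0$ at all (in the boundary lemma this is needed too, for the strict inequality in $\det\Lambda$, but it does hold there precisely because the observation region is all of $\Omega$). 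As a proof of the statement as written, however, your Step 2 rests on an identity that is false in general; you must say explicitly that you are amending the definition of $\overline\lambda$ to integrals over $\Omega\setminus\omega$, and why the statement needs that amendment.
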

The proof of the above Lemma
resembles the one of \Cref{lemma_rank_matrix_boundary}. A key point is that, being in the complement of the control region, for $i=1,2$, we have
\begin{equation}
-\Delta G\left(u_{+,i}\right) + f\left( G\left(u_{+,i}\right)\right)=0\hspace{0.6 cm} \mbox{in} \hspace{0.10 cm} \Omega\setminus \omega.
\end{equation}

\begin{proof}[Proof of \Cref{th_nouniq_int}.]
\textit{Step 1} \ \textbf{Reduction to radial controls.}\\
Suppose for some radial target $z$, the optimal control $u$ is not radial, that is there exists an orthogonal matrix $M$, such that $u\circ M\neq u$. By \Cref{lemma_rot_int}, we have $G\left(u\circ M\right)=G\left(u\right)\circ M$. Now,
\begin{align}\label{inv_rotationinternal}
I\left(u\circ M,z\right)&=\frac12\int_{B(0,r)}|u\circ M|^2dx+\frac{\beta}{2}\int_{B(0,R)\setminus B(0,r)} |G(u\circ M)|^2dx\nonumber\\
&\;\hspace{0.33 cm}-\beta\int_{B(0,R)\setminus B(0,r)} G(u\circ M)zdx\nonumber\\
&=\frac12\int_{B(0,r)}|u|^2dx+\frac{\beta}{2}\int_{B(0,R)\setminus B(0,r)} |G(u)|^2dx\\
&\;\hspace{0.33 cm}-\beta\int_{B(0,R)\setminus B(0,r)} G(u)zdx\nonumber\\
&=I(u,z)\nonumber,
\end{align}
where in the last equality \cref{inv_rotationinternal} we have employed the change of variable $\gamma(x)=Mx$. Then, $u$ and $u\circ M$ are two distinguished global minimizers for $I\left(\cdot,z\right)$, as desired. It remains to prove the nonuniqueness in case, for any radial target, all the optimal controls are radial. Hereafter, for a radial target $z$, we will consider the restriction of the functional $I(\cdot, z)$ to $\mathscr{U}_r$.\\
\textit{Step 2} \ \textbf{Existence of a special target $z^0\in L^{\infty}(B(0,R)\setminus B(0,r))$ such that $I(\cdot,z^0)$ admits (at least) two local minimizers, among radial controls.}\\
By \Cref{lemma_rank_matrix_internal}, there exists two controls $u_{-}<0<u_{+}$, such that \cref{def_Gamma_internal} is invertible.
Proceeding as in Step 2 of the proof of \Cref{th_nouniq_bound}, one can prove the existence of a special target
\begin{equation*}
z^0\coloneqq \begin{dcases}
z^0_1 \hspace{0.6 cm} &\mbox{in}\ \omega_1\\
z^0_2 \hspace{0.6 cm} &\mbox{in} \ \omega_2
\end{dcases}
\end{equation*}
such that $I(u_{-},z^0)<0$ and $I(u_{+},z^0)<0$. Note that in this case $\omega_1$ and $\omega_2$ are defined in \cref{lemma_molt_defomega1_internal} and \cref{lemma_molt_defomega2_internal} respectively.

We show now that $I\left(\cdot,z^0\right)$ admits (at least) two local minimizers in $\mathscr{U}_r$. Indeed, the set $\mathscr{U}_r$ (introduced in \cref{Def_U_r}) splits
\begin{equation*}
\mathscr{U}_r = \mathscr{U}_r^- \cup \mathscr{U}_r^+,
\end{equation*}
with
\begin{align}
\mathscr{U}_r^-&= \left\{u\in \mathscr{U}_r \ \big| \ G(u)\hspace{-0.1 cm}\restriction_{\partial B(0,r)}\leq 0\right\}\nonumber\\
\mathscr{U}_r^+&= \left\{u\in \mathscr{U}_r \ \big| \ G(u)\hspace{-0.1 cm}\restriction_{\partial B(0,r)}\geq 0\right\},
\end{align}
where we have used that for any radial control $u$, by \Cref{lemma_rot_int}, $G(u)$ is radial and (by elliptic regularity \cite[Theorem 4 page 334]{PDE}) continuous, so that $G(u)\hspace{-0.1 cm}\restriction_{\partial B(0,r)}$ is a real number.

By \Cref{lemma3_internal} (1.), there exist:
\begin{equation*}
u_1\in \mathscr{U}_r^-\hspace{ 0.5 cm}\mbox{such that}\hspace{0.5 cm}I(u_1,z^0)=\inf_{\mathscr{U}_r^-}[I(\cdot,z^0)]
\end{equation*}
and
\begin{equation*}
u_2\in \mathscr{U}_r^+\hspace{0.5 cm}\mbox{such that}	\hspace{0.5 cm}I(u_2,z^0)=\inf_{\mathscr{U}_r^+}[I(\cdot,z^0)].
\end{equation*}
Now, for any control $u\in \left\{u\in \mathscr{U}_r \ \big| \ G(u)\hspace{-0.1 cm}\restriction_{\partial B(0,r)}= 0\right\}$,
we have
\begin{equation*}
I(u_1,z^0)=\inf_{\mathscr{U}_r^-}[I(\cdot,z^0)]\leq I(u_{-},z^0)<0\leq I(u,z^0)
\end{equation*}
and
\begin{equation*}
I(u_2,z^0)=\inf_{\mathscr{U}_r^+}[I(\cdot,z^0)]\leq I(u_{+},z^0)<0\leq I(u,z^0).
\end{equation*}
Then, necessarily $u_1$ is a local minimizer for $I(\cdot,z^0)$ in the open set\\
$\left\{u\in \mathscr{U}_r \ \big| \ G(u)\hspace{-0.1 cm}\restriction_{\partial B(0,r)}< 0\right\}$ and $u_2$ is a local minimizer for $I(\cdot,z^0)$ in the open set\\
$\left\{u\in \mathscr{U}_r \ \big| \ G(u)\hspace{-0.1 cm}\restriction_{\partial B(0,r)}> 0\right\}$. Hence, we have found $u_1$ and $u_2$ two distinct local minimizers for $I(\cdot, z^0)$ in $\mathscr{U}_r$.\\
\textit{Step 3} \ \textbf{Conclusion}\\
We remind the definition of $h_1$ and $h_2$ given by \cref{def_h_1_PDEinternal} and \cref{def_h_2_PDEinternal} resp. In Step 2, we have determined $z^0\in L^{\infty}(B(0,R)\setminus B(0,r))$ such that $h_1(z^0)<0$ and $h_2(z^0)<0$. To finish our proof it suffices to find $\tilde{z}\in\mathbb{R}^n$ such that $h_1(\tilde{z})=h_2(\tilde{z})<0$. This follows from \Cref{lemma4_internal}.
\end{proof}

\section{Numerical simulations}
\label{sec:simulations}

We have performed a numerical simulation in the context of boundary control. We illustrate in \cref{fig:functionalnouniq3} an example, with step target
\begin{equation}\label{step_target}
z(x)\coloneqq \begin{dcases}
410000 \hspace{0.6 cm} &\mbox{for}\ 0<x<\frac14 \mbox{ and } \frac34<x<1\\
-10300000 \hspace{0.6 cm} &\mbox{for} \ \frac14<x<\frac34.
\end{dcases}
\end{equation}

\begin{figure}[tbhp]
\begin{center}
	\centering
	\includegraphics[width=13 cm]{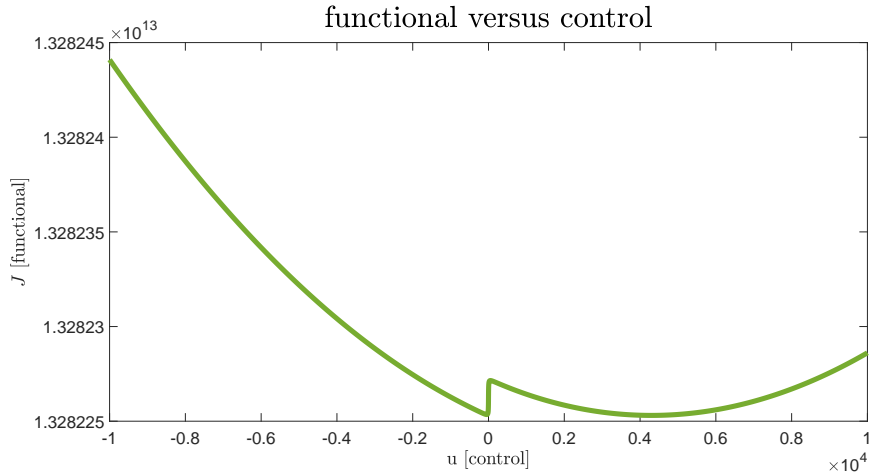}
	\caption{functional versus control (nonuniqueness of the \textit{global} minimizer). This plot is obtained by drawing in MATLAB the graph of $J$ defined in \cref{functional_nouniqboundary}, with space dimension $n=1$, $R=1$, weighting parameter $\beta=1$ and target \cref{step_target}.}
	\label{fig:functionalnouniq3}
\end{center}
\end{figure}

As we have seen in the proof of \Cref{th_nouniq_bound}, we can reduce to the case of constant controls on the boundary. In our case, the space dimension is $n=1$. Then, we have reduced to the case the same control acts on both endpoints $x=0$ and $x=1$. Hence, we plot in \cref{fig:functionalnouniq3} the restriction $J\hspace{-0.1 cm}\restriction_{\mathbb{R}}:\mathbb{R}\longrightarrow \mathbb{R}$, the functional $J$ being defined in \cref{functional_nouniqboundary}.

There exist two distinguished global minimizers:
\begin{itemize}
\item a negative one $u_1\cong -50$;
\item a positive one $u_2\cong 4298$.
\end{itemize}
The corresponding optimal states are depicted in figures \cref{stateminus50} and \cref{state4298}.

\begin{figure}[tbhp]
\begin{center}
	\centering
	\includegraphics[width=12 cm]{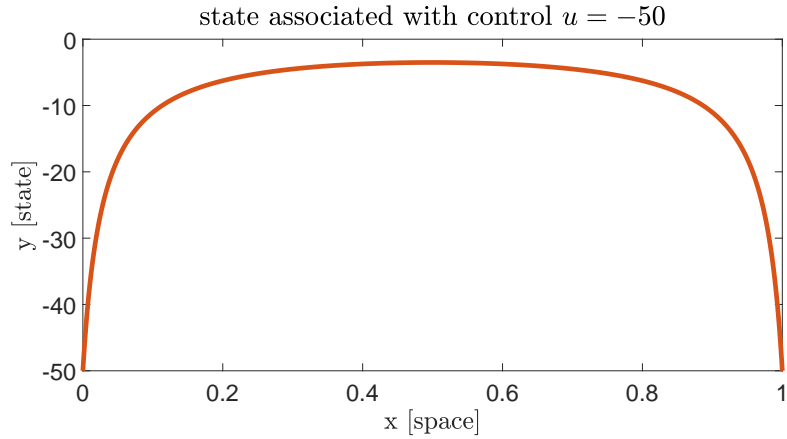}
	\caption{state associated with control $u=-50$.}\label{stateminus50}
\end{center}
\end{figure}
\begin{figure}[tbhp]
\begin{center}
	\centering
	\includegraphics[width=12 cm]{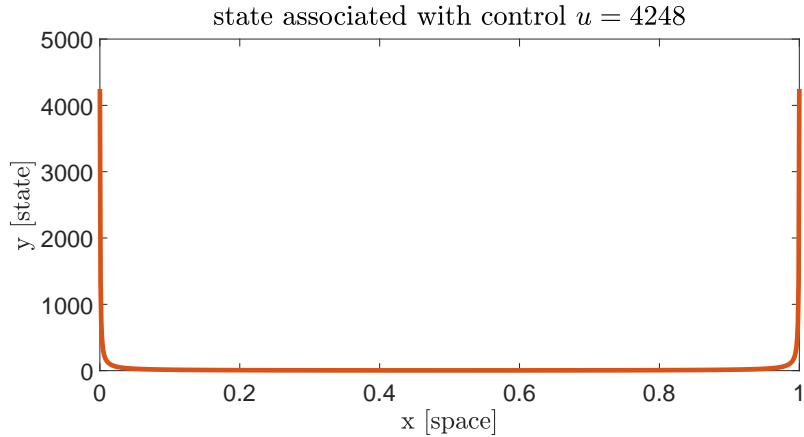}
	\caption{state associated with control $u=4298$.}\label{state4298}
\end{center}
\end{figure}


The idea behind this example is that two optimal strategies are available:
\begin{itemize}
\item take a large positive control $u_2$ to better approximate the target in $\left(0,\frac14\right)\cup\left(\frac34,1\right)$;
\item take a negative control $u_1$ to keep the state closer to the target in $\left(\frac14,\frac34\right)$.
\end{itemize}
Note that $|u_1|<|u_2|$. Indeed, the control acts at the endpoints $x=0$ and $x=1$ of the space domain. Then, the effect of the control is stronger in $\left(0,\frac14\right)\cup\left(\frac34,1\right)$ than in $\left(\frac14, \frac34\right)$. For this reason, it is worth to take a large positive control to better approximate the target in $\left(0,\frac14\right)\cup\left(\frac34,1\right)$. On the other hand, it is less convenient to take a very negative control to approximate the target in $\left(\frac14, \frac34\right)$ (see the local estimates for semilinear equations \cite{henry1977etude} and \cite[proof of Theorem 1.3]{EFR}).

In \cref{fig:functionalnouniq3} we observe that the functional has a different behaviour close to zero and away from zero. This can be explained by studying the behaviour of the control-to-state map \cref{control_to_state_PDE_boundary}:
\begin{itemize}
\item close to zero \cref{control_to_state_PDE_boundary} is closed to its linearization around zero;
\item far from zero \cref{control_to_state_PDE_boundary} is strongly influenced by the nonlinearity $f(y)=y^3$, thus producing a drastic change in the shape of the functional.
\end{itemize}


Numerical simulations have been performed in MATLAB. We explain now the numerical methods employed.

Firstly choose an interval of controls $[-M,M]$, where to study the functional $J$. Then, our goal is to plot $J\hspace{-0.1 cm}\restriction_{[-M,M]}:[-M,M]\longrightarrow \mathbb{R}$.

For the interval $[-M,M]$, we choose an equi-spaced grid $v_i=-M+(i-1)\frac{2M}{N_c-1}$, with $i=1,\dots ,N_c$ and $N_c\in \mathbb{N}\setminus \left\{0\right\}$.


Now, for each control $v_i$, we need to find numerically the corresponding state $y_i$, solution to the following PDE with cubic nonlinearity
\begin{equation}\label{semilinear_internal_elliptic_1_one_dim}
\begin{dcases}
-\left(y_i\right)_{xx}+(y_i)^3=0\hspace{2.8 cm} & \hspace{0.10 cm}x\in (0,1)\\
y_i(0)=y_i(1)=v_i.
\end{dcases}
\end{equation}

Following \cite[subsubsection 4.3.2]{boyer2017controllability}, we solve \cref{semilinear_internal_elliptic_1_one_dim} by a fixed-point type algorithm with relaxation. Namely, in any iteration $k$, we determine the solution $y_{i,k}$ to the linear PDE
\begin{equation}\label{semilinear_internal_elliptic_1_one_dim_linearized}
\begin{dcases}
-(y_{i,k})_{xx}+(\theta_{i,k-1})^2y_{i,k}=0\hspace{2.8 cm} & \hspace{0.10 cm}x\in (0,1)\\
y_{i,k}(0)=y_{i,k}(1)=v_i
\end{dcases}
\end{equation}
and we set $\theta_k\coloneqq \frac12 \theta_{i,k-1}+\frac12 y_k$. The initial guess $\theta_{i,0}$ is taken to be $y_{i-1}$, i.e. the solution to \cref{semilinear_internal_elliptic_1_one_dim}, with control $v_{i-1}$.

To compute the solution to the linear PDE \cref{semilinear_internal_elliptic_1_one_dim_linearized}, we choose a finite difference scheme with uniform space grid $x_j=\frac{j-1}{\Delta x}$, where $j=1,\dots,N_x$, $N_x\in \mathbb{N}\setminus \left\{0\right\}$ and $\Delta x\coloneqq \frac{1}{N_x-1}$. Then, $y_{i,k}=\left(y_{i,k,j}\right)_{j}$ is a $N_x$-dimensional discrete vector solution to
\begin{equation*}\label{semilinear_internal_elliptic_1_one_dim_linearized_finite_difference}
\begin{dcases}
\frac{-y_{i,k,j-1}+2y_{i,k,j}-y_{i,k,j+1}}{(\Delta x)^2}+(\theta_{i,k-1,j})^2y_{i,k,j}=0\hspace{1.8 cm} & \hspace{0.10 cm}j=2,\dots,N_x-1\\
y_{i,k,1}=y_{i,k,N_x}=v_i.
\end{dcases}
\end{equation*}

%
Once we have determined the state $y_i$, we evaluate the functional $J$ at the control $v_i$. The integral appearing in \cref{functional_nouniqboundary} can be computed by quadrature methods.
%
%
We are now in position to plot the functional $J\hspace{-0.1 cm}\restriction_{[-M,M]}:[-M,M]\longrightarrow \mathbb{R}$.

Note that, as long as we know, the actual convergence of the fixed-point method described has not been proved. However, for any control $v_i$, we are able to check that the state computed solves the finite difference version of the nonlinear problem \cref{semilinear_internal_elliptic_1_one_dim} up to a small error.

An extensive literature is available on the numerical approximation of solutions to \cref{semilinear_internal_elliptic_1_one_dim} (see, for instance, \cite{glowinski2013numerical} for a survey). Let us mention two alternative numerical methods.\\
The first one is a finite difference-Newton method presented in \cite[subsection 2.16.1]{leveque2007finite}. The idea is to discretize directly \cref{semilinear_internal_elliptic_1_one_dim}. This leads to a nonlinear equation in finite dimension, solved by a Newton method.\\
Another option is to find the solution to \cref{semilinear_internal_elliptic_1_one_dim}, as minimizer of the convex functional
\begin{equation*}
K(y)=\frac12\int_0^1 |y_x|^2 dx+\frac14 \int_0^1 y^4 dx
\end{equation*}
over the affine space
\begin{equation*}
\mathcal{A}\coloneqq \left\{y\in H^1(0,1) \ | \ y(0)=y(1)=v\right\}.
\end{equation*}


\section{Conclusions and open problems}
\label{sec:conclusions}

We have illustrated a general methodology to show lack of convexity for quadratic functionals with nonlinear state equations (\Cref{thlackconv}). Furthermore, we have developed a counterexample to uniqueness of the global minimizer in optimal control of semilinear elliptic equations (\Cref{th_nouniq_bound} and \Cref{th_nouniq_int}).

We list some interesting problems, which, to the best of our knowledge, have not been addressed in the literature so far.

\subsection{General space domain}
\label{subsec:conclusionsGeneral space domain}

Our counterexample to uniqueness of the minimizer in semilinear control relies on the rotational invariance of the space domain $B(0,R)$ to reduce to constant/radial controls. It would be interesting to enhance the developed techniques to more general space domains.

\subsection{Relations with the turnpike property}
\label{subsec:conclusions}

Consider the time-evolution control problem associated to \cref{semilinear_internal_elliptic_1}-\cref{functionalinternal}
\begin{equation}\label{functional_turnpike_op}
\min_{u\in \mathscr{U}_T}J_{T}(u)=\frac12\int_0^T\int_{B(0,r)} |u|^2 dxdt+ \frac{\beta}{2}\int_0^T\int_{B(0,R)\setminus B(0,r)} |y-z|^2 dxdt,
\end{equation}
where $\mathscr{U}_T\coloneqq L^2((0,T)\times B(0,r))$ and the state $y$ associated to control $u$ is solution to the semilinear heat equation
\begin{equation}\label{semilinear_internal_1_turnpike_op}
\begin{dcases}
y_t-\Delta y+f(y)=u\chi_{B(0,r)}\hspace{2.8 cm} & \mbox{in} \hspace{0.10 cm}(0,T)\times B(0,R)\\
y=0  & \mbox{on}\hspace{0.10 cm} (0,T)\times \partial B(0,R)\\
y(0,x)=y_0(x)  & \mbox{in}\hspace{0.10 cm}  B(0,R).
\end{dcases}
\end{equation}
The nonlinearity $f$ is $C^3$ and nondecreasing, with $f(0)=0$. The assumptions on the state equation are the same of \cite[section 3]{PZ2}. An optimal control for the above problem is denoted by $u^T$, while the corresponding optimal state by $y^T$.

We rewrite \cref{semilinear_internal_elliptic_1}-\cref{functionalinternal} with an ``s'' subscript to stress the steady-state character of the problem
\begin{equation}\label{steady_functional_turnpike_op}
\min_{u_s\in L^2(B(0,r))}J_s(u_s)=\frac12\int_{B(0,r)} |u_s|^2 dx +\frac{\beta}{2}\int_{B(0,R)\setminus B(0,r)} |y_s-z|^2 dx,
\end{equation}
where:
\begin{equation}\label{semilinear_internal_elliptic_1_turnpike_op}
\begin{dcases}
-\Delta y_s+f\left(y_s\right)=u_s\chi_{B(0,r)}\hspace{2.8 cm} & \mbox{in} \hspace{0.10 cm}B(0,R)\\
y_s=0  & \mbox{on}\hspace{0.10 cm} \partial B(0,R).
\end{dcases}
\end{equation}
We denote by $(\overline{u},\overline{y})$ an optimal pair, where $\overline{u}$ is an optimal control and $\overline{y}$ the corresponding optimal state.

Consider a target $z$, such that $J_s$ has two distinguished global minimizers, as in \Cref{th_nouniq_int}. Choose any initial datum $y_0\in L^{\infty}(B(0,R))$ for the evolution equation \cref{semilinear_internal_1_turnpike_op}. Let $u^T$ be a minimizer for \eqref{functional_turnpike_op}. Then, a question arises: if the turnpike property is satisfied, which minimizer for \cref{semilinear_internal_elliptic_1_turnpike_op}-\cref{steady_functional_turnpike_op} attracts the optimal solutions to \cref{semilinear_internal_1_turnpike_op}-\cref{functional_turnpike_op}? Namely, for which optimal pair $\left(\overline{u},\overline{y}\right)$ for \cref{semilinear_internal_elliptic_1_turnpike_op}-\cref{steady_functional_turnpike_op} we have the estimate
\begin{equation*}
\|u^T(t)-\overline{u}\|_{L^{\infty}(B(0,r))}+\|y^T(t)-\overline{y}\|_{L^{\infty}(B(0,R))}\leq K\left[e^{-\mu t}+e^{-\mu(T-t)}\right],\hspace{0.6 cm}\forall t\in [0,T],
\end{equation*}
where the constants $K$ and $\mu>0$ are independent of the time horizon $T$.

According to \cite[Theorem 1, section 3]{PZ2}, this depends on the sign of the second differential of the functional $J_s$ computed at the minima, which in turns is linked to the sign of the term $\beta\chi_{B(0,R)\setminus B(0,r)}-f^{\prime\prime}\left(\overline{y}\right)\overline{q}$.

\appendix

\section{Preliminaries for boundary control}
\label{sec:appendix.Preliminaries for boundary control}

In this section, we present some results in boundary control. We accomplish this task in a general space domain $\Omega$.

Let $\Omega$ be a bounded open subset of $\mathbb{R}^n$, with boundary $\partial \Omega \in C^{\infty}$. The nonlinearity $f\in C^1(\mathbb{R})$ is increasing and $f(0)=0$. We introduce the class of test functions
\begin{equation*}
\mathscr{C}\coloneqq \left\{\varphi\in C^2\left(\overline{\Omega}\right) \ | \ \varphi(x)=0, \ \forall x\in \partial \Omega\right\}
\end{equation*}
and the notion of solution.

\begin{definition}\label{def_sol_semilinear_boundary_elliptic}
Let $u\in L^{\infty}(\partial \Omega)$. Then, $y\in L^{\infty}(\Omega)$ is said to be a solution to the boundary value problem
\begin{equation}\label{semilinear_boundary_elliptic_1_gdomain_wp}
\begin{dcases}
-\Delta y+f(y)=0\hspace{2.8 cm} & \mbox{in} \hspace{0.10 cm}\Omega\\
y=u  & \mbox{on}\hspace{0.10 cm} \partial \Omega.
\end{dcases}
\end{equation}
if for any test function $\varphi\in \mathscr{C}$, we have
\begin{equation*}
\int_{\Omega}\left[-y\Delta \varphi+f(y)\varphi\right]dx+\int_{\partial \Omega}u\frac{\partial \varphi}{\partial n} d\sigma(x)=0,
\end{equation*}
where $n$ is the outward normal to $\partial \Omega$.
\end{definition}

We have the following existence and uniqueness result, inspired by the proof of \cite[Proposition 5.1]{ACS}.

\begin{proposition}\label{prop_exuniqellipticsemilinear}
Let $u\in L^{\infty}(\partial \Omega)$. There exists a unique solution\\
$y\in L^{\infty}(\Omega)\cap H^{\frac12}(\Omega)$ to \eqref{semilinear_boundary_elliptic_1_gdomain_wp}, with estimate
\begin{equation}\label{estimate_L^2*}
\left\|y\right\|_{L^{2^*}(\Omega)}\leq K\left\|u\right\|_{L^2(\partial \Omega)},
\end{equation}
the constant $K=K(\Omega)$ being independent of the nonlinearity $f$ and $2^*=\frac{2n}{n-1}$. If the boundary control $u\in H^{\frac12}\left(\partial\Omega\right)\cap C^0\left(\partial \Omega\right)$, then in fact $y\in H^1\left(\Omega\right)\cap C^0\left(\overline{\Omega}\right)$.
\end{proposition}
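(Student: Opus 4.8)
The plan is to obtain the very weak solution of \Cref{def_sol_semilinear_boundary_elliptic} as a limit of solutions with smooth boundary data, exploiting throughout the two structural facts that $f$ is monotone increasing and that $f(0)=0$. First I would fix a sequence $u_k\in C^\infty(\partial\Omega)$ with $u_k\to u$ weak-$*$ in $L^\infty(\partial\Omega)$ and in $L^2(\partial\Omega)$, keeping $\|u_k\|_{L^\infty(\partial\Omega)}\le\|u\|_{L^\infty(\partial\Omega)}$. For each $k$ I would solve the semilinear problem with datum $u_k$ by writing $y_k=w_k+v_k$, where $w_k$ is the harmonic extension of $u_k$ and $v_k\in H_0^1(\Omega)$ solves $-\Delta v_k+f(v_k+w_k)=0$ in $H_0^1(\Omega)$. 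The associated operator is monotone, hemicontinuous and coercive (monotonicity of $f$ controls the nonlinear term, while $f(w_k)$ is bounded because $\|w_k\|_{L^\infty}\le\|u_k\|_{L^\infty}$), so Browder--Minty --- or, alternatively, the constants $\pm\|u_k\|_{L^\infty(\partial\Omega)}$ used as sub- and supersolution --- yields a unique $y_k\in H^1(\Omega)\cap L^\infty(\Omega)$ with $\|y_k\|_{L^\infty}\le\|u_k\|_{L^\infty}$.

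The crucial point is the estimate \cref{estimate_L^2*} with a constant independent of $f$, and for this I would argue by comparison with harmonic functions rather than by energy estimates. Let $W_k^{+}$ and $W_k^{-}$ be the harmonic extensions of $u_k^{+}=\max(u_k,0)$ and $u_k^{-}=\max(-u_k,0)$; both are nonnegative by the maximum principle. Testing the equation for $y_k-W_k^{+}$ with $(y_k-W_k^{+})^{+}$ and using that $f(y_k)>0$ precisely where $y_k>0$ shows $y_k\le W_k^{+}$; the symmetric argument gives $y_k\ge -W_k^{-}$, whence the pointwise bound $|y_k|\le W_k^{+}+W_k^{-}$ \emph{independently of $f$}. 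Since the harmonic extension maps $L^2(\partial\Omega)$ continuously into $H^{1/2}(\Omega)\hookrightarrow L^{2^*}(\Omega)$ with $2^*=2n/(n-1)$, I obtain $\|y_k\|_{L^{2^*}(\Omega)}\le K\|u_k\|_{L^2(\partial\Omega)}$ with $K=K(\Omega)$. A separate energy estimate on $v_k$ (again using $(f(v_k+w_k)-f(w_k))v_k\ge0$ and the boundedness of $f(w_k)$) bounds $\|v_k\|_{H^1}$ uniformly in $k$, now at the price of a dependence on $f$; combined with the $H^{1/2}$ bound on $w_k$, this bounds $y_k$ uniformly in $H^{1/2}(\Omega)$. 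Extracting a subsequence converging weakly in $H^{1/2}$ and strongly in $L^2$, the $L^\infty$ bound and the continuity of $f$ let me pass to the limit in the formulation of \Cref{def_sol_semilinear_boundary_elliptic}, producing $y\in L^\infty(\Omega)\cap H^{1/2}(\Omega)$ inheriting \cref{estimate_L^2*}.

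For uniqueness I would subtract two solutions $y_1,y_2\in L^\infty\cap H^{1/2}$. Their difference $w$ is a very weak solution, with zero boundary datum, of the \emph{linear} problem $-\Delta w=-(f(y_1)-f(y_2))$, whose right-hand side lies in $L^\infty(\Omega)$; by uniqueness of very weak solutions for the linear Dirichlet problem (Stampacchia), $w$ coincides with the $H_0^1(\Omega)$ weak solution. Testing with $w$ then gives $\int_\Omega|\nabla w|^2\,dx=-\int_\Omega\big(f(y_1)-f(y_2)\big)(y_1-y_2)\,dx\le0$ by monotonicity of $f$, forcing $w\equiv0$.

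Finally, for the regularity statement, if $u\in H^{1/2}(\partial\Omega)\cap C^0(\partial\Omega)$ then its harmonic extension $w$ belongs to $H^1(\Omega)$, and since $v\in H_0^1(\Omega)$ we get $y=w+v\in H^1(\Omega)$; continuity up to $\overline\Omega$ follows from interior elliptic regularity applied to $-\Delta y=-f(y)\in L^\infty(\Omega)$ together with a boundary barrier argument, available because $\partial\Omega$ is smooth and $u$ is continuous. The step I expect to be most delicate is the $f$-independence of the constant in \cref{estimate_L^2*}: it hinges on keeping the absorption term on the favorable side throughout, so that the comparison with the harmonic extensions of $u^{\pm}$ is legitimate, and on the sharp harmonic-extension bound $L^2(\partial\Omega)\to L^{2^*}(\Omega)$, which is exactly the endpoint matching the $H^{1/2}$ Sobolev embedding; the boundary continuity in the last assertion is the other point requiring care.
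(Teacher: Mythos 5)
Your proposal is correct, and its skeleton coincides with the paper's own proof: both rest on splitting $y$ into the harmonic extension of the boundary datum plus an $H_0^1$ correction solving a monotone semilinear problem; both obtain the $f$-independent estimate \cref{estimate_L^2*} by comparison with the harmonic extension of $\left|u\right|$ (your pointwise bound $|y_k|\le W_k^{+}+W_k^{-}$ is exactly that function), followed by the Lions--Magenes bound $\|\cdot\|_{H^{1/2}(\Omega)}\le K\|u\|_{L^2(\partial\Omega)}$ and the embedding $H^{1/2}(\Omega)\hookrightarrow L^{2^*}(\Omega)$; and both get the final $H^1\cap C^0\left(\overline{\Omega}\right)$ regularity from the linear part plus elliptic theory for the correction. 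The differences are presentational but worth noting. First, you regularize the boundary datum and pass to the limit, whereas the paper solves directly with $u\in L^{\infty}(\partial\Omega)$ by citing Lions--Magenes transposition for the harmonic part and Boccardo--Croce for the $H_0^1$ semilinear correction; your detour is harmless, but keep the sub/supersolution (truncation) variant rather than raw Browder--Minty, since without a growth condition on $f$ (none is assumed here) the map $v\mapsto f(v+w_k)$ need not send $H_0^1$ into $H^{-1}$, so the monotone-operator theorem does not apply verbatim. Second, your uniqueness argument --- upgrading the difference of two very weak solutions to the $H_0^1$ weak solution of a linear problem via transposition uniqueness, then testing with the difference and using monotonicity of $f$ --- is more explicit than the paper's, which leaves uniqueness implicit in the uniqueness of the two pieces of its decomposition; this is a genuine small gain in rigor, since an arbitrary solution in the sense of \Cref{def_sol_semilinear_boundary_elliptic} is not a priori of the decomposed form.
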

One of the key points of the proof will be the increasing character of the nonlinearity.
\begin{proof}[Proof of \Cref{prop_exuniqellipticsemilinear}]
\textit{Step 1} \ \textbf{Solve a non-homogeneous linear problem}\\
By \cite[Th\'eor\`eme 7.4, page 202]{LM1}, there exists a unique solution $y_1\in H^{\frac12}(\Omega)$ to the non-homogeneous boundary value problem
\begin{equation}\label{linear_boundary_elliptic_1_gdomain_wp}
\begin{dcases}
-\Delta y_1=0\hspace{2.8 cm} & \mbox{in} \hspace{0.10 cm}\Omega\\
y_1=u  & \mbox{on}\hspace{0.10 cm} \partial \Omega.
\end{dcases}
\end{equation}
The boundary value $u\in L^{\infty}(\partial \Omega)$. Hence, by a comparison argument, we have $y_1\in L^{\infty}(\Omega)$.\\
\textit{Step 2} \ \textbf{Solve an homogeneous semilinear problem}\\
Since the nonlinearity $f$ is increasing, by adapting the techniques of \cite[Theorem 4.7, page 29]{boccardo2013elliptic}, there exists a unique $y_2\in H^1_0(\Omega)$ solution to
\begin{equation}\label{semilinear_boundary_elliptic_1_gdomain_wp_y2}
\begin{dcases}
-\Delta y_2+f\left(y_1+y_2\right)=0\hspace{2.8 cm} & \mbox{in} \hspace{0.10 cm}\Omega\\
y_2=0  & \mbox{on}\hspace{0.10 cm} \partial \Omega.
\end{dcases}
\end{equation}
By a comparison argument, since $y_1\in L^{\infty}(\Omega)$, we have $y_2\in L^{\infty}(\Omega)$. Then, $y=y_1+y_2\in L^{\infty}(\Omega)\cap H^{\frac12}(\Omega)$ is the unique solution to \cref{semilinear_boundary_elliptic_1_gdomain_wp}.\\
\textit{Step 3} \  \textbf{Proof of the estimate \cref{estimate_L^2*}}\\
By a comparison argument, we have
\begin{equation}\label{comparison_state_wp}
\left|y\right|\leq \hat{y}, \hspace{0.3 cm}\mbox{a.e.} \ \Omega,
\end{equation}
with
\begin{equation}\label{upper_semilinear_boundary_elliptic_1_gdomain_constrained_wp}
\begin{dcases}
-\Delta \hat{y}=0\hspace{2.8 cm} & \mbox{in} \hspace{0.10 cm}\Omega\\
\hat{y}=\left|u\right|  & \mbox{on}\hspace{0.10 cm} \partial \Omega.
\end{dcases}
\end{equation}
Now, by \cite[Th\'eor\`eme 7.4, page 202]{LM1}, the solution $\hat{y}\in H^{\frac12}(\Omega)$, with estimate
\begin{equation}\label{est_barrsol_lin}
\left\|\hat{y}\right\|_{H^{\frac12}(\Omega)}\leq K\left\|u\right\|_{L^2(\partial \Omega)}.
\end{equation}
The above inequality, together with the fractional Sobolev embedding $H^{\frac12}(\Omega)\hookrightarrow L^{2^*}(\Omega)$ (see e.g. \cite[Theorem 6.7]{HFV}), yields
\begin{equation*}
\left\|\hat{y}\right\|_{L^{2^*}(\Omega)}\leq\left\|\hat{y}\right\|_{H^{\frac12}(\Omega)}\leq K\left\|u\right\|_{L^2(\partial \Omega)},
\end{equation*}
whence by \cref{comparison_state_wp}, we have
\begin{equation*}
\left\|y\right\|_{L^{2^*}(\Omega)}\leq \left\|\hat{y}\right\|_{L^{2^*}(\Omega)}\leq K\left\|u\right\|_{L^2(\partial \Omega)},
\end{equation*}
with $K=K(\Omega)$, as required.\\
\textit{Step 4} \  \textbf{Improved regularity}\\
Since $\partial \Omega\in C^{\infty}$, by \cite[Th\'eor\`eme 7.4, page 202]{LM1} and \cite[Proposition 1.29 page 14]{giaquinta2013introduction}, the solution to \cref{linear_boundary_elliptic_1_gdomain_wp} $y_1\in H^1\left(\Omega\right)\cap C^0\left(\overline{\Omega}\right)$. Now, $y_2$ solves the linear problem
\begin{equation}\label{semilinear_boundary_elliptic_1_gdomain_wp_y2_linear}
\begin{dcases}
-\Delta y_2+cy_2=-f\left(y_1\right)\hspace{2.8 cm} & \mbox{in} \hspace{0.10 cm}\Omega\\
y_2=0  & \mbox{on}\hspace{0.10 cm} \partial \Omega.
\end{dcases}
\end{equation}
with bounded coefficient
\begin{equation*}c(x)\coloneqq
\begin{cases}
\frac{f\left(y_1(x)+y_2(x)\right)-f\left(y_1(x)\right)}{y_2(x)} \hspace{1.3 cm} & y_2(x)\neq 0\\
f^{\prime}(y_1(x)) & y_2(x)= 0.\\
\end{cases}
\end{equation*}
Then, by \cite[Th\'eor\`eme 7.4, page 202]{LM1} and \cite[Theorem 8.30 page 206]{EPG} applied to \eqref{semilinear_boundary_elliptic_1_gdomain_wp_y2_linear}, $y_2\in H^1\left(\Omega\right)\cap C^0\left(\overline{\Omega}\right)$. Hence, $y=y_1+y_2\in H^1\left(\Omega\right)\cap C^0\left(\overline{\Omega}\right)$, as desired.
\end{proof}

We now state and prove some Lemmas needed in the manuscript.



\begin{lemma}\label{lemma_rangesolutionelliptic}
Let $u\in L^{\infty}(\partial \Omega)$ be a control. Let $y$ be the solution to \eqref{semilinear_boundary_elliptic_1_gdomain_wp}, with control $u$. Assume the nonlinearity $f$ is strictly increasing and $y$ is constant. Then, $y\equiv 0$ and $u\equiv 0$.
\end{lemma}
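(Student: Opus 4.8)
The plan is to feed the constant solution directly into the weak formulation of \Cref{def_sol_semilinear_boundary_elliptic}, exploiting that a constant function is automatically smooth and harmonic. Write $y\equiv c$ for the (unknown) constant value. Inserting $y\equiv c$ into the defining identity, the first term becomes $\int_{\Omega}(-c\,\Delta\varphi)\,dx=-c\int_{\partial\Omega}\frac{\partial\varphi}{\partial n}\,d\sigma(x)$ by the divergence theorem, while $f(y)\equiv f(c)$ is constant. Hence, for every test function $\varphi\in\mathscr{C}$, the identity reads
\begin{equation*}
f(c)\int_{\Omega}\varphi\,dx+\int_{\partial\Omega}\bigl(u-c\bigr)\frac{\partial\varphi}{\partial n}\,d\sigma(x)=0.
\end{equation*}

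First I would pin down the value of $c$. Testing against any $\varphi\in C_c^{\infty}(\Omega)\subset\mathscr{C}$, which vanishes together with all its derivatives near $\partial\Omega$, annihilates the boundary integral and leaves $f(c)\int_{\Omega}\varphi\,dx=0$; choosing such a $\varphi$ with $\int_{\Omega}\varphi\,dx\neq0$ forces $f(c)=0$. Since $f$ is strictly increasing with $f(0)=0$, the equation $f(c)=0$ admits the unique root $c=0$, so that $y\equiv0$, which is the first assertion.

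It then remains to recover $u\equiv0$. With $c=0$ and $f(c)=0$, the identity collapses to $\int_{\partial\Omega}u\,\frac{\partial\varphi}{\partial n}\,d\sigma(x)=0$ for all $\varphi\in\mathscr{C}$. The genuinely delicate step is to upgrade this orthogonality to $u=0$: I would show that the family of normal traces $\bigl\{\frac{\partial\varphi}{\partial n}\big|_{\partial\Omega}:\varphi\in\mathscr{C}\bigr\}$ is dense in $L^2(\partial\Omega)$. Concretely, given $g\in C^{\infty}(\partial\Omega)$, one constructs $\varphi\in\mathscr{C}$ with prescribed normal derivative $g$ by working in a collar neighbourhood of $\partial\Omega$: in boundary-normal coordinates $(x',s)$, with $s$ the distance to $\partial\Omega$, set $\varphi(x',s)=-s\,\tilde g(x')\,\chi(s)$, where $\tilde g$ is a smooth extension of $g$ and $\chi$ a cut-off equal to $1$ near $s=0$, so that (up to the sign convention for the outward normal) $\varphi=0$ and $\frac{\partial\varphi}{\partial n}=g$ on $\partial\Omega$. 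As $C^{\infty}(\partial\Omega)$ is dense in $L^2(\partial\Omega)$ and $u\in L^{\infty}(\partial\Omega)\subset L^2(\partial\Omega)$, the orthogonality relation yields $u=0$ a.e.\ on $\partial\Omega$. I expect this surjectivity of normal traces to be the only point requiring care; everything else is immediate from the weak formulation and the strict monotonicity of $f$.
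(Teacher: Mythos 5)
Your proposal is correct, and its first half coincides with the paper's own proof: both insert $y\equiv c$ into the weak formulation of \Cref{def_sol_semilinear_boundary_elliptic}, test against $\varphi\in C^{\infty}_c(\Omega)$ so that the boundary term drops out, deduce $f(c)=0$, and conclude $c=0$ from strict monotonicity and $f(0)=0$. Where you go beyond the paper is the second half: the paper ends with ``whence $y\equiv 0$ and $u\equiv 0$,'' asserting the vanishing of the boundary control with no justification, while you actually prove it. Your argument --- that $y\equiv 0$ reduces the weak formulation to $\int_{\partial\Omega}u\,\frac{\partial\varphi}{\partial n}\,d\sigma(x)=0$ for all $\varphi\in\mathscr{C}$, and that every $g\in C^{\infty}(\partial\Omega)$ arises as a normal trace $\frac{\partial\varphi}{\partial n}$ of some $\varphi\in\mathscr{C}$ via the collar construction $\varphi=-s\,\tilde g(x')\chi(s)$ --- is sound (the sign works out since $\partial/\partial n=-\partial/\partial s$ with $s$ the inner distance, and smoothness of $\partial\Omega$ guarantees the boundary-normal coordinates), and density of $C^{\infty}(\partial\Omega)$ in $L^2(\partial\Omega)$ then gives $u=0$ a.e. So your proof is strictly more complete: it fills the one genuine gap in the paper's argument, namely that injectivity of the trace condition does not come for free in a transposition-type weak formulation but requires surjectivity (or density of the range) of the normal-trace map $\varphi\mapsto\frac{\partial\varphi}{\partial n}$ on the test class.
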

\begin{proof}[Proof of \Cref{lemma_rangesolutionelliptic}]
Suppose there exists $c\in\mathbb{R}$, such that $y(x)=c$, for any $x\in \Omega$. Then, by \Cref{def_sol_semilinear_boundary_elliptic}, for any for any test function $\varphi\in C^{\infty}_c(\Omega)$, we have
\begin{equation*}
\int_{\Omega}\left[-c\Delta \varphi+f(c)\varphi\right]dx=0,
\end{equation*}
where $n$ is the outward normal to $\partial \Omega$ and $C^{\infty}_c(\Omega)$ denoted the class of infinitely many times differentiable functions, with compact support in $\Omega$. Integrating by parts, we have
\begin{equation*}
\int_{\Omega}f(c)\varphi dx=0,
\end{equation*}
for any $\varphi\in C^{\infty}_c(\Omega)$, which leads to $f(c)=0$. Now, $f(0)=0$ and $f$ is strictly increasing. Hence $f(c)=0$ if and only if $c=0$, whence $y\equiv 0$ and $u\equiv 0$.
\end{proof}

%
In the next Lemma, $x^i$ and $\mathbf{e}^i$ denote respectively the $i_{\mbox{\tiny{th}}}$ component of the vector $x\in \mathbb{R}^n$ and the $i_{\mbox{\tiny{th}}}$ element of the canonical base of $\mathbb{R}^n$.
\begin{lemma}\label{lemma_posmeasndir}
Let $\Omega$ be an open set. Let $E\subset \Omega$ be a Lebesgue measurable set, with positive Lebesgue measure. Then for a.e. $\hat{x}\in \Omega$ and for any $i=1,\dots,n$ there exists a sequence $\left\{x_m^i\right\}_{m\in \mathbb{N}}\subset \mathbb{R}$ (with $x_m^i \mathbf{e}^i+\hat{x}\neq \hat{x}$) such that
\begin{equation}
x_m^i \mathbf{e}^i+\hat{x}\underset{m\to +\infty}{\longrightarrow}\hat{x}.
\end{equation}
\end{lemma}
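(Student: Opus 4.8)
The plan is to reduce the statement to the one-dimensional Lebesgue density theorem, applied slice by slice, and then to glue the slices together with Fubini's theorem. Throughout I read the conclusion in its only nontrivial form: the approximating points $x_m^i\mathbf{e}^i+\hat x$ are required to lie in $E$, and the claim is established for almost every $\hat x\in E$ (which is all that the application needs, since $|E|>0$ guarantees such points exist).

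First I would fix a direction $i\in\{1,\dots,n\}$ and split the coordinates as $\mathbb{R}^n=\mathbb{R}\times\mathbb{R}^{n-1}$, writing $x=(t,x')$ with $t=x^i$ the $i$-th coordinate and $x'$ the remaining $n-1$ coordinates. For each $x'\in\mathbb{R}^{n-1}$ introduce the one-dimensional slice
\begin{equation*}
E_{x'}\coloneqq\left\{t\in\mathbb{R} \ \big| \ (t,x')\in E\right\},
\end{equation*}
which is Lebesgue measurable for a.e. $x'$ by Fubini's theorem. Then I would apply the one-dimensional Lebesgue density theorem to each slice: for a.e. $x'$, almost every $t\in E_{x'}$ is a density point of $E_{x'}$, so every punctured neighborhood $(t-\tfrac1m,t+\tfrac1m)\setminus\{t\}$ meets $E_{x'}$ in a set of positive one-dimensional measure. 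Picking one such point for each $m$ produces a sequence $\{x_m^i\}_{m}\subset\mathbb{R}$ with $x_m^i\neq 0$, $x_m^i\to 0$ and $t+x_m^i\in E_{x'}$, that is $x_m^i\mathbf{e}^i+(t,x')\in E$ and $x_m^i\mathbf{e}^i+(t,x')\to(t,x')$ along the axis $\mathbf{e}^i$.

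Next I would collect the exceptional points. Let $N_i\subset E$ be the set of $x=(t,x')\in E$ for which $t$ is \emph{not} such a density point of $E_{x'}$. By the one-dimensional density theorem the slice $(N_i)_{x'}$ is Lebesgue-null for a.e. $x'$, so Fubini's theorem yields $|N_i|=\int_{\mathbb{R}^{n-1}}|(N_i)_{x'}|\,dx'=0$. Since there are only finitely many coordinate directions, the set $E^{*}\coloneqq E\setminus\bigcup_{i=1}^n N_i$ still has full measure in $E$. For every $\hat x\in E^{*}$ and every $i$, the construction of the previous paragraph supplies the required sequence $\{x_m^i\}$ with $x_m^i\mathbf{e}^i+\hat x\in E$, $x_m^i\mathbf{e}^i+\hat x\neq\hat x$ and $x_m^i\mathbf{e}^i+\hat x\to\hat x$, which is the assertion of the lemma.

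The only genuinely delicate points are bookkeeping in nature: the measurability of the slices $E_{x'}$ and of the bad set $N_i$, which is exactly what Fubini's theorem (together with the measurability of the fiberwise density structure) provides up to null sets, and the verification that a one-dimensional density point forces points of the slice distinct from $t$ to accumulate at $t$. Both are standard, and the finite intersection over the $n$ directions is harmless since a finite union of null sets is null. I therefore expect no substantive obstacle beyond setting up Fubini carefully.
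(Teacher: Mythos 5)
Your proof is correct, and you have also correctly identified the only nontrivial reading of the statement (the approximating points must belong to $E$, and the conclusion holds for a.e.\ $\hat x\in E$), which is what the paper's own proof actually establishes and what the application in \Cref{lemma_rank_matrix_boundary} requires. Your route shares the paper's overall skeleton --- work one coordinate direction at a time, show the exceptional set has null one-dimensional slices, conclude by Fubini, and finish with a finite union over the $n$ directions --- but the core mechanism is genuinely different. The paper never invokes the Lebesgue density theorem: its exceptional set is the set of \emph{component-isolated} points of $E$ (points $x$ such that, for some radius $r$, the segment of half-length $r$ through $x$ in direction $\mathbf{e}^i$ meets $E$ only in $x$ itself); after reducing to rational $r$, the points of this set lying on a fixed line in direction $\mathbf{e}^i$ are $r$-separated, hence at most countable, hence the slice is null --- an entirely elementary counting argument. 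You instead apply the one-dimensional density theorem to each slice $E_{x'}$, taking as exceptional set the points that are not density points of their slice. Your argument is shorter and gives a quantitatively stronger property at the good points (every punctured interval around $\hat x^i$ meets $E_{x'}$ in \emph{positive measure}, not merely in a nonempty set), at the price of heavier machinery; the paper's argument is more elementary and self-contained, and its good set (non-isolated points) is a priori larger than yours (slice density points), though both have full measure in $E$. Both proofs face the same measurability bookkeeping before Fubini can be applied to the exceptional set: yours via joint measurability of $(t,x')\mapsto\bigl|E_{x'}\cap(t-h,t+h)\bigr|$ and limits along rational $h$, the paper's via the (equally brisk) assertion that measurability ``follows from the continuity of the distance function.'' You flag this point explicitly and it is standard, so there is no gap.
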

\begin{proof}[Proof of \Cref{lemma_posmeasndir}]
%
Let us introduce the set of component-isolated points of $E$
\begin{equation}
E_{\mbox{\tiny{ci}}}\coloneqq \bigcup_{\substack{i=1,\dots,n \\ r>0} }\left\{x\in E \ | \ B_{\mbox{\tiny{i, r}}}\cap E=\left\{x\right\} \right\}.
\end{equation}
where
\begin{equation}
B_{\mbox{\tiny{i, r}}}\coloneqq\left\{\left(x^1,\dots,x^{i-1},y,x^{i+1},\dots ,x^{n}\right) \ | \ y\in [x^{i}-r,x^{i}+r]\right\}.
\end{equation}
\textit{Step 1} \  \textbf{Reduction to single component and radius}\\
By the the above definitions and the density of rational numbers in the reals, we have
\begin{equation}
E_{\mbox{\tiny{ci}}}= \bigcup_{\substack{i=1,\dots,n \\ r>0 \ \mbox{\tiny{and}} \ r\in \mathbb{Q} }}\left\{x\in E \ | \ B_{\mbox{\tiny{i, r}}}\cap E=\left\{x\right\} \right\}.
\end{equation}
By the countable additivity of the Lebesgue measure, we reduce then to prove that, for any $i=1,\dots,n$ and for any $r>0$, the set
\begin{equation}
E_{\mbox{\tiny{ci,i,r}}}\left\{x\in E \ | \ B_{\mbox{\tiny{i, r}}}\cap E=\left\{x\right\} \right\}
\end{equation}
is Lebesgue-measurable and has Lebesgue measure zero.\\
\textit{Step 2} \  \textbf{Conclusion}\\
The measurability of $E_{\mbox{\tiny{ci,i,r}}}$ follows from the continuity of the distance function. Let us compute its measure. For any $\left(x^1,\dots,x^{i-1},x^{i+1},\dots ,x^{n}\right)\in \mathbb{R}^{n-1}$, set
\begin{equation}
E_{\left(x^1,\dots,x^{i-1},x^{i+1},\dots ,x^{n}\right)}\coloneqq\left\{y\in \mathbb{R} \ | \ \left(x^1,\dots,x^{i-1},y,x^{i+1},\dots ,x^{n}\right)\in E_{\mbox{\tiny{ci,i,r}}} \right\},
\end{equation}
where we have dropped the subscript $_{\mbox{\tiny{ci,i,r}}}$ to avoid weighting the notation.

Now, by definition of $E_{\mbox{\tiny{ci,i,r}}}$, for any $x\in E_{\mbox{\tiny{ci,i,r}}}$, the set $E_{\left(x^1,\dots,x^{i-1},x^{i+1},\dots ,x^{n}\right)}$ is at most countable, whence of null Lebesgue measure. Then, by Fubini's Theorem, we have
\begin{equation}
\mu_{Leb}\left(E_{\mbox{\tiny{ci,i,r}}}\right) = \int_{\mathbb{R}^{n-1}}\mu_{Leb}\left(E_{\left(x^1,\dots,x^{i-1},x^{i+1},\dots ,x^{n}\right)}\right)d(x_1,\dots,x_{i-1},x_{i+1},\dots,x_n)=0,
\end{equation}
as required.
\end{proof}

\begin{lemma}\label{lemma_funtion_grad}
Let $\Omega$ be an open set. Let $y_1$ and $y_2$ be two functions of class $C^2\left(\Omega\right)$. Then,
\begin{equation}
\mu_{Leb}\left(\left\{x\in E \ | \ y_1(x)=y_2(x) \right\}\right)=\mu_{Leb}\left(\left\{x\in E \ | \ y_1(x)=y_2(x) \ \mbox{and} \ \nabla y_1(x)=\nabla y_2(x) \right\}\right).
\end{equation}
\end{lemma}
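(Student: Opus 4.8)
The plan is to reduce the statement to the assertion that the gradient of the difference $w \coloneqq y_1 - y_2 \in C^2(\Omega)$ vanishes almost everywhere on its own zero level set. Indeed, the two sets appearing in the claimed identity differ exactly by
\begin{equation*}
A \coloneqq \left\{x \in E \ | \ w(x) = 0 \ \text{and} \ \nabla w(x) \neq 0\right\},
\end{equation*}
so the equality of the measures is equivalent to $\mu_{Leb}(A) = 0$. Writing $A_{\Omega}$ for the analogous set with $E$ replaced by $\Omega$, we have $A \subseteq A_{\Omega}$, so by monotonicity of the (outer) Lebesgue measure it suffices to prove $\mu_{Leb}(A_{\Omega}) = 0$. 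The only regularity I shall actually use is $w \in C^1(\Omega)$.

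First I would work locally near an arbitrary point of $A_{\Omega}$. Fix $x_0 \in \Omega$ with $w(x_0) = 0$ and $\nabla w(x_0) \neq 0$; then some partial derivative, say $\partial_i w(x_0)$, is nonzero. By the implicit function theorem there is an open neighborhood $U_{x_0}$ of $x_0$ and a $C^1$ function $\phi$ of the remaining $n-1$ variables such that, inside $U_{x_0}$, the zero set $\left\{w = 0\right\}$ coincides with the graph $x^i = \phi\left(x^1,\dots,x^{i-1},x^{i+1},\dots,x^n\right)$. Such a graph is Lebesgue-null in $\mathbb{R}^n$: by Fubini's theorem almost every line parallel to the $i$-th coordinate axis meets it in at most one point. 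Hence $\mu_{Leb}\left(U_{x_0} \cap \left\{w = 0\right\}\right) = 0$.

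Next I would globalize by a countable covering argument. The family $\left\{U_{x_0}\right\}_{x_0 \in A_{\Omega}}$ is an open cover of $A_{\Omega}$, and since $\mathbb{R}^n$ is second countable (Lindel\"of), it admits a countable subcover $\left\{U_{x_k}\right\}_{k \in \mathbb{N}}$. Because $A_{\Omega} \subseteq \left\{w = 0\right\}$, we obtain
\begin{equation*}
A_{\Omega} \subseteq \bigcup_{k \in \mathbb{N}} \left(U_{x_k} \cap \left\{w = 0\right\}\right),
\end{equation*}
and by countable subadditivity $\mu_{Leb}(A_{\Omega}) \leq \sum_{k} \mu_{Leb}\left(U_{x_k} \cap \left\{w = 0\right\}\right) = 0$. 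Therefore $\mu_{Leb}(A_{\Omega}) = 0$, whence $\mu_{Leb}(A) = 0$, which is the desired identity.

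The argument is conceptually routine; the two points requiring care are purely bookkeeping. The first is passing from the uncountable family of neighborhoods produced by the implicit function theorem to a countable subcover, which is exactly what the Lindel\"of property supplies. The second is confirming that the local graph of a $C^1$ function carries no $n$-dimensional Lebesgue mass, which Fubini's theorem along the $i$-th coordinate delivers. I expect no genuine obstacle: the hypothesis $w \in C^2(\Omega)$ is more than enough, since only $w \in C^1(\Omega)$ enters the proof.
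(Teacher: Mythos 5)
Your proof is correct, but it follows a genuinely different route from the paper's. The paper does not argue by contrapositive on the gradient of the difference; instead it proves an auxiliary measure-theoretic fact (its Lemma~\ref{lemma_posmeasndir}): for a.e.\ point $\hat{x}$ of a positive-measure set, and for every coordinate direction $i$, there is a sequence of points of the set of the form $x_m^i\mathbf{e}^i+\hat{x}$ converging to $\hat{x}$ (the set of ``component-isolated'' points is shown to be null via Fubini). Applying this to the coincidence set $\left\{y_1=y_2\right\}$, the paper then computes $\partial y_1/\partial x_i(\hat{x})$ and $\partial y_2/\partial x_i(\hat{x})$ as limits of difference quotients along that sequence; since $y_1$ and $y_2$ agree at all points involved, the quotients coincide and hence so do the gradients a.e.\ on the coincidence set. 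Your argument instead isolates the bad set $A_\Omega=\left\{w=0,\ \nabla w\neq 0\right\}$ for $w=y_1-y_2$, uses the implicit function theorem to see that near each of its points the level set $\left\{w=0\right\}$ is a $C^1$ graph (null by Fubini), and concludes by a Lindel\"of covering and countable subadditivity. Both proofs ultimately rest on Fubini, but they trade different hypotheses: yours needs $w\in C^1$ (for the implicit function theorem) while the paper's difference-quotient argument needs only pointwise existence of the partial derivatives, so it is more elementary in machinery and valid under weaker regularity; yours, in exchange, is shorter and leans on a classical theorem rather than a bespoke lemma. One cosmetic remark: your claim that equality of the two measures is \emph{equivalent} to $\mu_{Leb}(A)=0$ can fail when the measures are infinite, but this is harmless since you only use (and correctly establish) the implication $\mu_{Leb}(A)=0\Rightarrow$ equality, which holds by monotonicity and subadditivity of outer measure.
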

\begin{proof}
If $\mu_{Leb}\left(\left\{x\in E \ | \ y_1(x)=y_2(x) \right\}\right)$, the thesis follows.
Otherwise, let us apply \Cref{lemma_posmeasndir}, getting for a.e. $\hat{x}\in \Omega$ and for any $i=1,\dots,n$ a sequence $\left\{x_m^i\right\}_{m\in \mathbb{N}}\subset \mathbb{R}$ (with $x_m^i \mathbf{e}^i+\hat{x}\neq \hat{x}$) such that
\begin{equation*}
x_m^i \mathbf{e}^i+\hat{x}\underset{m\to +\infty}{\longrightarrow}\hat{x}.
\end{equation*}
Then,
\begin{eqnarray*}
	\frac{\partial y_2}{\partial x_i}\left(\hat{x}\right)&=&\lim_{m\to +\infty}\frac{y_2\left(x_m^i \mathbf{e}^i+\hat{x}\right)-y_2\left(\hat{x}\right)}{x_m^i \mathbf{e}^i+\hat{x}-\hat{x}}\nonumber\\
	&=&\lim_{m\to +\infty}\frac{y_1\left(x_m^i \mathbf{e}^i+\hat{x}\right)-y_1\left(\hat{x}\right)}{x_m^i \mathbf{e}^i+\hat{x}-\hat{x}}\nonumber\\
	&=&\frac{\partial y_1}{\partial x_i}\left(\hat{x}\right),\nonumber\\
\end{eqnarray*}
whence $\nabla y_2\left(\hat{x}\right)= \nabla y_1\left(\hat{x}\right)$, as required.
\end{proof}

\begin{lemma}\label{lemma_noncostcontrol}
Let $u\in L^{\infty}\left(\partial B(0,R)\right)$ be nonconstant. Then, there exists an orthogonal matrix $M$, such that
\begin{equation}
u\circ M \neq u.
\end{equation}
\end{lemma}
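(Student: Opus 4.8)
The plan is to prove the contrapositive: assuming $u\circ M=u$ almost everywhere on $\partial B(0,R)$ for \emph{every} orthogonal matrix $M$, I would show that $u$ agrees almost everywhere with a constant, so that a nonconstant $u$ must admit some $M$ with $u\circ M\neq u$ as elements of $L^{\infty}$. First I would record two preliminary facts. Each orthogonal $M$ maps $\partial B(0,R)$ onto itself and preserves the surface measure $\sigma$, so $u\circ M$ is a well-defined element of $L^{\infty}(\partial B(0,R))$ and the hypothesis is meaningful. I would also fix once and for all a Borel-measurable representative of $u$, so that, the group action being continuous, the composite $(M,x)\mapsto u(Mx)$ is Borel measurable on $O(n)\times\partial B(0,R)$.

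The first main step is a Fubini argument on the product $O(n)\times\partial B(0,R)$, where $O(n)$ carries its normalized Haar measure $\mu$ and the sphere carries $\sigma$. For each fixed $M$ the hypothesis says that the slice $\{x:u(Mx)\neq u(x)\}$ is $\sigma$-null; integrating the indicator of the bad set $\{(M,x):u(Mx)\neq u(x)\}$ and applying Tonelli, the whole bad set is $(\mu\otimes\sigma)$-null. Consequently, for $\sigma$-almost every $x$ one has $u(Mx)=u(x)$ for $\mu$-almost every $M$. I would then fix a single such good point $x_0$ and set $c:=u(x_0)$, obtaining $u(Mx_0)=c$ for $\mu$-a.e. $M\in O(n)$.

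The second main step transports this statement from the group to the sphere via the orbit map $\pi:O(n)\to\partial B(0,R)$, $\pi(M):=Mx_0$. The key point is that the pushforward $\pi_{*}\mu$ is an $O(n)$-invariant probability measure on the sphere: left-invariance of Haar measure yields $(\pi_{*}\mu)(R^{-1}A)=(\pi_{*}\mu)(A)$ for every $R\in O(n)$. Since $O(n)$ acts transitively on $\partial B(0,R)$ and the rotation-invariant probability measure on the sphere is unique, $\pi_{*}\mu$ coincides with the normalized surface measure $\sigma/\sigma(\partial B(0,R))$. Pulling the relation $u(Mx_0)=c$ through this identification, the preimage $\pi^{-1}(\{y:u(y)\neq c\})=\{M:u(Mx_0)\neq c\}$ is $\mu$-null, hence $\{y\in\partial B(0,R):u(y)\neq c\}$ has $\pi_{*}\mu$-measure zero and is therefore $\sigma$-null. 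Thus $u\equiv c$ almost everywhere, contradicting that $u$ is nonconstant.

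The only genuinely delicate ingredients are the two facts that make this transport work: the measurability needed to invoke Fubini, handled by passing to a Borel representative and using continuity of the action, and the identification $\pi_{*}\mu=\sigma/\sigma(\partial B(0,R))$, which rests on transitivity of the $O(n)$-action together with uniqueness of the invariant probability measure on the sphere. This uniqueness is the conceptual heart of the argument --- it is precisely the ergodicity of the rotation action --- and I would expect it to be the main point to justify with care, although it is classical. If one prefers to avoid Haar measure altogether, an alternative is to observe that for every Borel set $S\subseteq\mathbb{R}$ the level set $u^{-1}(S)$ is invariant modulo $\sigma$-null sets and to argue directly that a rotation-invariant measurable subset of the sphere must have measure $0$ or full measure; this, however, merely repackages the same ergodicity.
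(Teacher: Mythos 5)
Your proof is correct, but it takes a genuinely different route from the paper. The paper argues directly and constructively: fixing a representative $\tilde{u}$, almost every point of $\partial B(0,R)$ is a Lebesgue point (Lebesgue differentiation on the sphere), and since $u$ is nonconstant there are two Lebesgue points $x_1\neq x_2$ with $\tilde{u}(x_1)\neq\tilde{u}(x_2)$; choosing an orthogonal $M$ with $Mx_1=x_2$, the change of variables $y=Mx$ identifies the average of $\tilde{u}\circ M$ over a small cap around $x_1$ with the average of $\tilde{u}$ over the corresponding cap around $x_2$, and for $r$ small these two cap averages must differ because they converge to the distinct values $\tilde{u}(x_2)$ and $\tilde{u}(x_1)$ --- hence $u\circ M\neq u$. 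You instead prove the contrapositive by a soft argument: Fubini--Tonelli over $O(n)\times\partial B(0,R)$ with Haar measure, followed by pushing Haar measure forward along an orbit map and invoking uniqueness of the rotation-invariant probability measure on the sphere. Both arguments are sound. The paper's buys elementarity and an explicit $M$ (only the Lebesgue differentiation theorem is needed, which is exactly what the paper cites from Rudin); yours buys generality --- it works verbatim for any transitive action of a compact group on a homogeneous space and for merely integrable $u$ --- and it establishes the formally stronger dichotomy that invariance under \emph{all} rotations forces constancy, at the cost of the heavier ingredients (Haar measure, the uniqueness/ergodicity fact) that you correctly flag as the steps needing careful justification.
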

\begin{proof}[Proof of \Cref{lemma_noncostcontrol}]
In the present proof, we denote by $\tilde{u}$ a representative of the equivalence class $u\in L^{\infty}\left(\partial B(0,R)\right)$.
By \cite[Theorem 7.7]{rudin2006real}, a.e. $x\in \partial B(0,R)$ is a Lebesgue point for $\tilde{u}$, whence there exists $x_1\neq x_2$ Lebesgue points such that $\tilde{u}(x_1)\neq \tilde{u}(x_2)$. Let $M$ be an orthogonal matrix such that $Mx_1=x_2$. Then, since $x_1$ and $x_2$ are Lebesgue points, there exists $r>0$ such that
\begin{equation}\label{lemma_noncostcontrol_eq3}
\int_{\partial B(0,R)\cap B(x_1,r)}\tilde{u}_M(x)dx=\int_{\partial B(0,R)\cap B(x_2,r)}\tilde{u}(y)dy\neq \int_{\partial B(0,R)\cap B(x_1,r)}\tilde{u}(x)dx,
\end{equation}
where we have used the change of variable $y\coloneqq Mx$ and $\tilde{u}_M(x)\coloneqq \tilde{u}(Mx)$. \cref{lemma_noncostcontrol_eq3} shows that $u\circ M\neq u$, as required.
\end{proof}

We state and prove a well-known result: the rotational invariance of the Laplacian.
\begin{lemma}\label{lemma_rotinv_laplacian}
Let $\varphi\in C^2\left(\Omega\right)$ and let $M$ be an $n\times n$ orthogonal matrix. Then, for any $x\in \Omega$
\begin{equation}
\Delta\left(\varphi\circ M\right)=\Delta \left(\varphi\right)\circ M \hspace{0.6 cm} \mbox{in} \hspace{0.10 cm}\Omega.
\end{equation}
\end{lemma}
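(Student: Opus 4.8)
The plan is to compute $\Delta(\varphi\circ M)$ directly via the chain rule and then collapse the resulting double sum using the orthogonality of $M$. Write $M=(M_{ij})$, so that the $k$-th component of $Mx$ is $(Mx)_k=\sum_{j}M_{kj}x^j$, and set $\psi\coloneqq \varphi\circ M$, i.e. $\psi(x)=\varphi(Mx)$. Since $(Mx)_k$ is linear in $x$, we have $\partial (Mx)_k/\partial x^i=M_{ki}$, which is constant in $x$; this is what keeps all the ``extra'' terms that a general nonlinear change of variables would produce from appearing.

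First I would differentiate once. By the chain rule,
\begin{equation*}
\frac{\partial \psi}{\partial x^i}(x)=\sum_{k}(\partial_k\varphi)(Mx)\,M_{ki}.
\end{equation*}
Then I would differentiate a second time in $x^i$; because the coefficients $M_{ki}$ are constants, no product-rule terms involving derivatives of $M$ arise, and
\begin{equation*}
\frac{\partial^2 \psi}{\partial (x^i)^2}(x)=\sum_{k,l}(\partial_l\partial_k\varphi)(Mx)\,M_{li}M_{ki}.
\end{equation*}
Here $\varphi\in C^2(\Omega)$ guarantees these second derivatives exist and are continuous, so the expression is legitimate and Schwarz's theorem applies if needed.

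Next I would sum over $i=1,\dots,n$ to form the Laplacian and interchange the (finite) sums:
\begin{equation*}
\Delta\psi(x)=\sum_{k,l}(\partial_l\partial_k\varphi)(Mx)\sum_{i}M_{li}M_{ki}.
\end{equation*}
The crux of the argument is the inner sum: since $M$ is orthogonal, $MM^{\top}=I$, so $\sum_{i}M_{li}M_{ki}=(MM^{\top})_{lk}=\delta_{lk}$. Substituting this Kronecker delta kills the off-diagonal terms and leaves
\begin{equation*}
\Delta\psi(x)=\sum_{k}(\partial_k\partial_k\varphi)(Mx)=(\Delta\varphi)(Mx)=\bigl(\Delta\varphi\bigr)\circ M(x),
\end{equation*}
which is the claimed identity. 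I do not expect a genuine obstacle here: the whole proof is a short computation, and the only place any hypothesis is truly used is the orthogonality relation $MM^{\top}=I$, which is precisely what converts the generic double sum $\sum_{k,l}(\partial_l\partial_k\varphi)(Mx)(MM^{\top})_{lk}$ into the trace $\sum_k(\partial_k\partial_k\varphi)(Mx)$. The only care needed is bookkeeping with indices and remembering that the derivatives of $\varphi$ are evaluated at $Mx$ rather than at $x$.
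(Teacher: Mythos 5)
Your proof is correct and takes essentially the same approach as the paper: the paper writes the chain rule in matrix form, $\mbox{Hess}\left(\varphi\circ M\right)=M^{-1}\left[\mbox{Hess}\left(\varphi\right)\circ M\right]M$, and then invokes the similarity invariance of the trace, which is precisely your index computation $\sum_{i}M_{li}M_{ki}=\delta_{lk}$ unrolled into matrix language. No gaps.
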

\begin{proof}[Proof of \Cref{lemma_rotinv_laplacian}]
By the chain rule and the orthogonality of $M$, we have
\begin{equation*}
\mbox{Hess}\left(\varphi\circ M\right)=M^{-1}\left[\mbox{Hess}\left(\varphi\right)\circ M\right]M,
\end{equation*}
whence, by the similarity invariance of the trace, for any $x\in \Omega$
\begin{equation*}
\Delta\left(\varphi\circ M\right)=\mbox{Trace}\left(\mbox{Hess}\left(\varphi\circ M\right)\right)=\mbox{Trace}\left(M^{-1}\left[\mbox{Hess}\left(\varphi\right)\circ M\right]M\right)=\Delta \left(\varphi\right)\circ M,
\end{equation*}
as required.
\end{proof}

\begin{lemma}\label{lemma_rot}
Consider a rotational invariant domain $\Omega$. Let $u\in L^{\infty}(\partial \Omega)$ be a control and let $y$ be the solution to \eqref{semilinear_boundary_elliptic_1_gdomain_wp}, with control $u$. Let $M$ be an orthogonal matrix. Set $u_M(x)\coloneqq u(M(x))$ and $y_M(x)\coloneqq y(M(x))$. Then, $y_M$ is a solution to
\begin{equation}\label{semilinear_boundary_elliptic_rot_gdomain_wp}
\begin{dcases}
-\Delta y_M+f\left(y_M\right)=0\hspace{2.8 cm} & \mbox{in} \hspace{0.10 cm}\Omega\\
y_M=u_M  & \mbox{on}\hspace{0.10 cm} \partial \Omega
\end{dcases}
\end{equation}
in the sense of \Cref{def_sol_semilinear_boundary_elliptic}. If in addition $u_M=u$ for any orthogonal matrix $M$, then $y_M=y$, namely $y$ is a radial solution.
\end{lemma}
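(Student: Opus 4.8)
The plan is to verify directly that $y_M$ satisfies the weak formulation of \Cref{def_sol_semilinear_boundary_elliptic} associated with the boundary datum $u_M$, and then to invoke uniqueness. Concretely, I must show that for every test function $\varphi\in\mathscr{C}$ one has
\begin{equation*}
\int_{\Omega}\left[-y_M\Delta\varphi+f(y_M)\varphi\right]dx+\int_{\partial\Omega}u_M\frac{\partial\varphi}{\partial n}\,d\sigma(x)=0.
\end{equation*}
The idea is to perform the orthogonal change of variables $\xi=Mx$ (so $x=M^{-1}\xi$, with $|\det M|=1$, hence $dx=d\xi$) in each term, turning the three integrals above into the corresponding terms of the weak formulation for $y$, tested against the rotated function $\widetilde{\varphi}\coloneqq\varphi\circ M^{-1}$. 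Note that $\widetilde{\varphi}\in\mathscr{C}$, because $M^{-1}$ maps $\partial\Omega$ onto $\partial\Omega$, so $\widetilde{\varphi}\in C^2(\overline{\Omega})$ vanishes on $\partial\Omega$.

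For the interior terms, first I would rewrite $y_M(x)=y(Mx)$ and apply the substitution $\xi=Mx$. In the Laplacian term this produces $\int_{\Omega}-y(\xi)(\Delta\varphi)(M^{-1}\xi)\,d\xi$, and \Cref{lemma_rotinv_laplacian} (applied to the orthogonal matrix $M^{-1}$) identifies $(\Delta\varphi)(M^{-1}\xi)=\Delta(\varphi\circ M^{-1})(\xi)=\Delta\widetilde{\varphi}(\xi)$; the reaction term becomes $\int_{\Omega}f(y(\xi))\widetilde{\varphi}(\xi)\,d\xi$ directly, since $\varphi(M^{-1}\xi)=\widetilde{\varphi}(\xi)$. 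Thus the two interior integrals coincide with those of the weak formulation for $y$ tested against $\widetilde{\varphi}$.

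The main obstacle is the boundary term, where I must check that the normal derivative is preserved under the rotation. After substituting $\xi=Mx$ on $\partial\Omega$ (an isometry that preserves the surface measure $d\sigma$, since $\Omega$ is rotationally invariant), the term becomes $\int_{\partial\Omega}u(\xi)\frac{\partial\varphi}{\partial n}(M^{-1}\xi)\,d\sigma(\xi)$, so it remains to show $\frac{\partial\varphi}{\partial n}(M^{-1}\xi)=\frac{\partial\widetilde{\varphi}}{\partial n}(\xi)$. Computing $\nabla\widetilde{\varphi}(\xi)=M(\nabla\varphi)(M^{-1}\xi)$ (using $(M^{-1})^{T}=M$) and using that for a rotationally invariant domain the outward normal obeys $n(\xi)=Mn(M^{-1}\xi)$ (for the ball, $n(x)=x/R$, so this is immediate), the orthogonality relation $Ma\cdot Mb=a\cdot b$ yields $\frac{\partial\widetilde{\varphi}}{\partial n}(\xi)=\nabla\widetilde{\varphi}(\xi)\cdot n(\xi)=(\nabla\varphi)(M^{-1}\xi)\cdot n(M^{-1}\xi)=\frac{\partial\varphi}{\partial n}(M^{-1}\xi)$, as needed.

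Combining the three computations, the expression to be verified equals $\int_{\Omega}[-y\Delta\widetilde{\varphi}+f(y)\widetilde{\varphi}]\,d\xi+\int_{\partial\Omega}u\frac{\partial\widetilde{\varphi}}{\partial n}\,d\sigma$, which vanishes because $y$ solves \eqref{semilinear_boundary_elliptic_1_gdomain_wp} with control $u$ and $\widetilde{\varphi}\in\mathscr{C}$. Since $\varphi$ is arbitrary, $y_M$ solves \eqref{semilinear_boundary_elliptic_rot_gdomain_wp}. Finally, if $u_M=u$ for every orthogonal $M$, then $y_M$ and $y$ solve the very same boundary value problem, so by the uniqueness part of \Cref{prop_exuniqellipticsemilinear} we conclude $y_M=y$, i.e. $y(Mx)=y(x)$ for all orthogonal $M$, which is exactly the radiality of $y$.
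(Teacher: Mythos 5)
Your proof is correct and follows essentially the same route as the paper's: the change of variables $\xi=Mx$ with $|\det M|=1$, the rotational invariance of the Laplacian (\Cref{lemma_rotinv_laplacian}), the transformation of the gradient and outward normal under the orthogonal map to handle the boundary term, and finally uniqueness from \Cref{prop_exuniqellipticsemilinear} to conclude radiality. Your write-up is if anything slightly more explicit than the paper's (you verify that $\widetilde{\varphi}=\varphi\circ M^{-1}$ is an admissible test function and prove the normal-derivative identity in detail, steps the paper leaves implicit), but the underlying argument is the same.
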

\begin{proof}[Proof of \Cref{lemma_rot}]
As per \Cref{def_sol_semilinear_boundary_elliptic}, let us check that for any test function $\varphi\in \mathscr{C}$, we have
\begin{equation}\label{lemma_rot_eq3}
\int_{\Omega}\left[-y_M(x)\Delta \varphi(x)+f\left(y_M(x)\right)\varphi(x)\right]dx+\int_{\partial \Omega}u_M(x)\frac{\partial \varphi(x)}{\partial n} d\sigma(x)=0.
\end{equation}
Set $\tilde{x}\coloneqq Mx$. Since the matrix $M$ is orthogonal, $\left|\det(M)\right|=1$, whence by Change of Variables Theorem, definition of $y_M$ and \Cref{lemma_rotinv_laplacian}
\begin{eqnarray}\label{lemma_rot_eq6}
&\;&\int_{\Omega}\left[-y_M(x)\Delta \varphi(x)+f\left(y_M(x)\right)\varphi(x)\right]dx\nonumber\\
&=&\int_{\Omega}\left[-y\left(\tilde{x}\right)\Delta_{x} \varphi\left(M^{-1}\tilde{x}\right)+f\left(y\left(\tilde{x}\right)\right)\varphi\left(M^{-1}\tilde{x}\right)\right]d\tilde{x}\nonumber\\
&=&\int_{\Omega}\left[-y\left(\tilde{x}\right)\Delta_{\tilde{x}} \varphi\left(M^{-1}\tilde{x}\right)+f\left(y\left(\tilde{x}\right)\right)\varphi\left(M^{-1}\tilde{x}\right)\right]d\tilde{x}\nonumber\\
&=&\int_{\partial \Omega}u\left(\tilde{x}\right)\nabla_{\tilde{x}} \varphi(M^{-1}\tilde{x})\cdot n\left(\tilde{x}\right) d\sigma\left(\tilde{x}\right)\nonumber,\\
\end{eqnarray}
where in the last inequality we have used that $y$ is a solution to \eqref{semilinear_boundary_elliptic_1_gdomain_wp}, with control $u$. Now, we change back variable $x\coloneqq M^{-1}\tilde{x}$ in \eqref{lemma_rot_eq6}, getting
\begin{equation}
\int_{\partial \Omega}u\left(\tilde{x}\right)\nabla_{\tilde{x}} \varphi(M^{-1}\tilde{x})\cdot n\left(\tilde{x}\right) d\sigma\left(\tilde{x}\right)=\int_{\partial \Omega}u\left(Mx\right)\nabla_{x} \varphi(x)M^{-1}\cdot Mn(x) d\sigma\left(\tilde{x}\right),
\end{equation}
whence \cref{lemma_rot_eq3} follows. Therefore, if the control is radial, for any orthogonal matrix $M$, $y_M$ is the solution to the same boundary value problem. The uniqueness for \cref{semilinear_boundary_elliptic_1_gdomain_wp} yields $y_M=y$.
\end{proof}

We now prove the existence of a global minimizer for the functional $J$, defined in \cref{semilinear_boundary_elliptic_1}-\cref{functional_nouniqboundary}. This will be given by the coercivity in $L^2$ of $J$, enhanced by employing the regularity of the solutions to the optimality system. As we did in the former section, we are going to accomplish this task in a general space domain $\Omega$. Consider the optimal control problem
\begin{equation}\label{functional_nouniqboundary_gdomain}
\min_{u\in L^{\infty}(\partial \Omega)}J(u)=\frac12\int_{\partial \Omega} |u|^2 d\sigma(x) +\frac{\beta}{2}\int_{\Omega} |y-z|^2 dx,
\end{equation}
where:
\begin{equation}\label{semilinear_boundary_elliptic_1_gdomain}
\begin{dcases}
-\Delta y+f(y)=0\hspace{2.8 cm} & \mbox{in} \hspace{0.10 cm}\Omega\\
y=u  & \mbox{on}\hspace{0.10 cm} \partial \Omega.
\end{dcases}
\end{equation}
$\Omega$ is a bounded open subset of $\mathbb{R}^n$, with $n=1,2,3$ and $\partial \Omega\in C^{\infty}$. The nonlinearity $f\in C^1\left(\mathbb{R}\right)\cap C^2\left(\mathbb{R}\setminus \left\{0\right\}\right)$ is strictly increasing and $f(0)=0$. The target $z\in L^{\infty}(\Omega)$ and $\beta> 0$ is a penalization parameter.

\begin{proposition}\label{proposition_eolinf}
Let $z\in L^{\infty}(\Omega)$ be target for the state and let $J$ be the corresponding functional, defined in \eqref{semilinear_boundary_elliptic_1_gdomain}-\eqref{functional_nouniqboundary_gdomain}. There exists $\overline{u}\in L^{\infty}(\partial \Omega)$ a global minimizer for $J$.
\end{proposition}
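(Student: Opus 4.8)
The plan is to obtain the minimizer by the Direct Method, but carried out on a family of truncated problems, and then to remove the truncation by means of an a priori $L^{\infty}$ bound coming from the optimality system. The truncation is needed because coercivity only controls the control in $L^2(\partial\Omega)$: from $J(u)\ge \frac12\|u\|^2_{L^2(\partial\Omega)}$ together with $J(u)\le J(0)=\frac{\beta}{2}\|z\|^2_{L^2(\Omega)}$ (recall $G(0)=0$ by \Cref{lemma_rangesolutionelliptic}), a minimizing sequence is bounded in $L^2(\partial\Omega)$ but not in $L^{\infty}(\partial\Omega)$. Without a uniform $L^{\infty}$-bound on the associated states one cannot pass to the limit in the nonlinear term $f(y_k)$ of the weak formulation in \Cref{def_sol_semilinear_boundary_elliptic}, for $f$ is allowed to grow arbitrarily.

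First I would fix $M>0$ and minimize $J$ over the closed ball $K_M\coloneqq\{u\in L^{\infty}(\partial\Omega)\ :\ \|u\|_{L^{\infty}(\partial\Omega)}\le M\}$. On $K_M$ the comparison principle underlying \Cref{prop_exuniqellipticsemilinear} gives the uniform bound $\|G(u)\|_{L^{\infty}(\Omega)}\le M$, so that $f(G(u))$ is dominated by $\sup_{|s|\le M}|f(s)|$. Taking a minimizing sequence $\{u_k\}\subset K_M$, coercivity and the energy estimates in the proof of \Cref{prop_exuniqellipticsemilinear} bound the states $y_k=G(u_k)$ in $H^{1/2}(\Omega)$; by the compact embedding $H^{1/2}(\Omega)\hookrightarrow L^2(\Omega)$ one extracts $u_k\rightharpoonup \overline u_M$ weakly in $L^2(\partial\Omega)$, with $\overline u_M\in K_M$ since the ball is weakly closed, and $y_k\to \overline y_M$ strongly in $L^2(\Omega)$ and a.e. Passing to the limit in the weak formulation of \Cref{def_sol_semilinear_boundary_elliptic} — the linear terms by these convergences, and the term $f(y_k)$ by dominated convergence — identifies $\overline y_M=G(\overline u_M)$; weak lower semicontinuity of the $L^2(\partial\Omega)$-norm together with the strong convergence of the states then yields $J(\overline u_M)\le \liminf_k J(u_k)=\inf_{K_M}J$, so $\overline u_M$ minimizes $J$ over $K_M$.

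The crucial step is a uniform a priori bound $\|\overline u_M\|_{L^{\infty}(\partial\Omega)}\le C_1$, with $C_1=C_1(\beta,z,\Omega)$ independent of $M$. Here I would use the first-order optimality conditions for the constrained problem: letting $\overline q_M$ be the adjoint state solving $-\Delta\overline q_M+f'(\overline y_M)\overline q_M=\beta(\overline y_M-z)$ in $\Omega$ with $\overline q_M=0$ on $\partial\Omega$ (as in \cref{semilinear_boundary_elliptic_2_nouniq}), the projection form of the optimality relation gives $|\overline u_M|\le\bigl|\frac{\partial\overline q_M}{\partial n}\bigr|$ pointwise on $\partial\Omega$. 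From $J(\overline u_M)\le J(0)$ one has $\|\overline u_M\|_{L^2(\partial\Omega)}\le\sqrt{\beta}\,\|z\|_{L^2}$, so $\|\overline y_M\|_{L^{2^*}(\Omega)}$ is uniformly bounded by \cref{estimate_L^2*}, whence $\beta(\overline y_M-z)$ is uniformly bounded in $L^{2^*}(\Omega)$. Exploiting the good sign $f'(\overline y_M)\ge 0$, I would dominate $\overline q_M$ by the solution $w$ of $-\Delta w=\beta|\overline y_M-z|$, $w=0$ on $\partial\Omega$ (the zeroth-order term only strengthens the comparison), obtaining $|\overline q_M|\le w$ and hence $\bigl|\frac{\partial\overline q_M}{\partial n}\bigr|\le\bigl|\frac{\partial w}{\partial n}\bigr|$ on $\partial\Omega$; since $w$ solves a linear problem whose datum is controlled in $L^{2^*}$ independently of $M$, elliptic regularity bounds $\frac{\partial w}{\partial n}$ in $L^{\infty}(\partial\Omega)$ after a short bootstrap upgrading the integrability of the datum, and this is exactly where the restriction $n\le 3$ enters.

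Finally, fixing $M_0>C_1$, the minimizer $\overline u_{M_0}$ lies strictly inside $K_{M_0}$; comparing across radii settles the claim. Indeed, any $v\in L^{\infty}(\partial\Omega)$ is feasible for $K_M$ with $M\coloneqq\max(M_0,\|v\|_{L^{\infty}(\partial\Omega)})$, so $J(v)\ge J(\overline u_M)$; the a priori bound gives $\|\overline u_M\|_{L^{\infty}(\partial\Omega)}\le C_1<M_0$, so $\overline u_M\in K_{M_0}$ and therefore $J(\overline u_M)\ge J(\overline u_{M_0})$, whence $J(v)\ge J(\overline u_{M_0})$. Thus $\overline u_{M_0}$ is a global minimizer over $L^{\infty}(\partial\Omega)$. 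I expect the main obstacle to be the uniform a priori bound of the third paragraph: the zeroth-order coefficient $f'(\overline y_M)$ of the adjoint equation is not uniformly bounded, so the control on $\frac{\partial\overline q_M}{\partial n}$ must be extracted from the sign of $f'$ and from the dimension-dependent Sobolev embeddings, rather than from a naive regularity estimate.
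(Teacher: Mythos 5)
Your proposal is correct and follows essentially the same strategy as the paper's proof: truncate to a constrained problem (the paper uses box constraints $a\le u\le b$ rather than symmetric balls), derive an $L^{\infty}$ bound on the constrained minimizers uniform in the truncation via the projection form of the optimality conditions, comparison with linear problems exploiting $f'\ge 0$, and an elliptic bootstrap (the paper's chain is $H^{1/2}\hookrightarrow L^3$, then $W^{2,3}$ adjoint regularity, trace in $L^4$, state in $L^4$ by transposition, $W^{2,4}\hookrightarrow C^1$, which is the ``short bootstrap'' you anticipate), and finally conclude by comparing across truncation levels exactly as in the paper's Step 3.
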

\begin{proof}[Proof of \Cref{proposition_eolinf}]
\textit{Step 1} \ \textbf{Existence of the minimizer for a constrained problem}\\
Let $a$, $b \ \in \mathbb{R}$, with $a<0<b$ and let the convex set
\begin{equation*}
\mathbb{K}\coloneqq \left\{u \in L^{\infty}\left(\partial \Omega\right) \ | \ a\leq u\leq b, \ \mbox{a.e.} \ \partial \Omega \right\}.
\end{equation*}
Under the same assumptions of \cref{semilinear_boundary_elliptic_1_gdomain}-\cref{functional_nouniqboundary_gdomain}, we consider the constrained optimal control problem:
\begin{equation}\label{functional_nouniqboundary_gdomain_constrained}
\min_{u\in \mathbb{K}}J(u)=\frac12\int_{\partial \Omega} |u|^2 d\sigma(x) +\frac{\beta}{2}\int_{\Omega} |y-z|^2 dx,
\end{equation}
where:
\begin{equation}\label{semilinear_boundary_elliptic_1_gdomain_constrained}
\begin{dcases}
-\Delta y+f(y)=0\hspace{2.8 cm} & \mbox{in} \hspace{0.10 cm}\Omega\\
y=u  & \mbox{on}\hspace{0.10 cm} \partial \Omega.
\end{dcases}
\end{equation}
By using the techniques in \cite{ESC}, we have the existence of an optimal control $\overline{u}_{\left(a,b\right)}\in \mathbb{K}$ and any optimal control is given by $\overline{u}_{\left(a,b\right)}=\mathbb{P}_{\left[a,b\right]}\left(\frac{\partial \overline{q}_{(a,b)}}{\partial n}\right)$, with
\begin{equation}\label{OS_constrained}
\begin{dcases}
-\Delta \overline{y}_{(a,b)}+f(\overline{y}_{(a,b)})=0\hspace{2.8 cm} & \mbox{in} \hspace{0.10 cm}\Omega\\
\overline{y}_{(a,b)}=\mathbb{P}_{\left[a,b\right]}\left(\frac{\partial \overline{q}_{(a,b)}}{\partial n}\right)  & \mbox{on}\hspace{0.10 cm} \partial \Omega\\
-\Delta \overline{q}_{(a,b)}+f^{\prime}(\overline{y}_{(a,b)})\overline{q}_{(a,b)}=\beta\left(\overline{y}_{(a,b)}-z\right)\hspace{2.8 cm} & \mbox{in} \hspace{0.10 cm}\Omega\\
\overline{q}_{(a,b)}=0  & \mbox{on}\hspace{0.10 cm} \partial \Omega,
\end{dcases}
\end{equation}
where $\mathbb{P}_{\left[a,b\right]}$ is the projector
\begin{equation}\mathbb{P}_{\left[a,b\right]}(\xi)\coloneqq
\begin{dcases}
a \hspace{0.3 cm} &\mbox{if} \ \xi\leq a\\
\xi  &\mbox{if} \ a<\xi< b\\
b  &\mbox{if} \ \xi\geq b.\\
\end{dcases}
\end{equation}
\textit{Step 2} \ \textbf{$L^{\infty}$ bounds for optimal controls uniform on $\left(a,b\right)\in\mathbb{R}^2$, with $a<0<b$}\\
Since $a<0<b$, the null control $0\in \mathbb{K}$. Then, for any optimal control $\overline{u}_{\left(a,b\right)}$ for \cref{semilinear_boundary_elliptic_1_gdomain_constrained}-\cref{functional_nouniqboundary_gdomain_constrained}, we have
\begin{equation*}
\frac12\int_{\partial \Omega}\left|\overline{u}_{\left(a,b\right)}\right|^2d \sigma(x)\leq J\left(\overline{u}_{\left(a,b\right)}\right)\leq J\left(0\right)\leq K,
\end{equation*}
whence
\begin{equation}\label{L^2_bound}
\left\|\overline{u}_{\left(a,b\right)}\right\|_{L^2(\partial \Omega)}\leq K,
\end{equation}
where $K=K(\Omega,f,\beta,z)$ is independent of $\left(a,b\right)$.

We now bootstrap in the optimality system \cref{OS_constrained}, to get the desired $L^{\infty}$ bound, given the above $L^2$ bound.

First of all, by a comparison argument, we have
\begin{equation}\label{comparison_state}
\left|\overline{y}_{(a,b)}\right|\leq \hat{y}_{\left(a,b\right)}, \hspace{0.3 cm}\mbox{a.e.} \ \Omega,
\end{equation}
with
\begin{equation}\label{upper_semilinear_boundary_elliptic_1_gdomain_constrained}
\begin{dcases}
-\Delta \hat{y}_{\left(a,b\right)}=0\hspace{2.8 cm} & \mbox{in} \hspace{0.10 cm}\Omega\\
\hat{y}_{\left(a,b\right)}=\left|\overline{u}_{\left(a,b\right)}\right|  & \mbox{on}\hspace{0.10 cm} \partial \Omega.
\end{dcases}
\end{equation}
Comparison gives also
\begin{equation}\label{comparison_adjoint}
\left|\overline{q}_{(a,b)}\right|\leq \hat{q}_{\left(a,b\right)}\hspace{0.3 cm}\mbox{and}\hspace{0.3 cm}\left|\frac{\partial \overline{q}_{(a,b)}}{\partial n}\right|\leq \left|\frac{\partial \hat{q}_{\left(a,b\right)}}{\partial n}\right|, \hspace{0.3 cm}\mbox{a.e.} \ \Omega
\end{equation}
with
\begin{equation}\label{upper_adjoint_boundary_elliptic_1_constrained}
\begin{dcases}
-\Delta \hat{q}_{\left(a,b\right)}=\beta\left|\overline{y}_{(a,b)}-z\right|\hspace{2.8 cm} & \mbox{in} \hspace{0.10 cm}\Omega\\
\hat{q}_{\left(a,b\right)}=0  & \mbox{on}\hspace{0.10 cm} \partial \Omega.
\end{dcases}
\end{equation}

Now, by \cite[Th\'eor\`eme 7.4, page 202]{LM1}, the solution $\hat{y}_{\left(a,b\right)}\in H^{\frac12}\left(\Omega\right)\hookrightarrow L^3(\Omega)$ and
\begin{equation*}
\left\|\hat{y}_{\left(a,b\right)}\right\|_{L^3(\Omega)}\leq K\left\|\hat{y}_{\left(a,b\right)}\right\|_{H^{\frac12}(\Omega)}\leq K\left\|\overline{u}_{\left(a,b\right)}\right\|_{L^{2}(\partial \Omega)}\leq K.
\end{equation*}
where the first inequality is given by the Sobolev embedding $H^{\frac12}\left(\Omega\right)\hookrightarrow L^3(\Omega)$ valid for space dimension $n=1,2,3$ (see e.g. \cite[Theorem 6.7]{HFV}) and the last inequality is justified by \cref{L^2_bound}. By \cref{comparison_state},
\begin{equation*}
\left\|\overline{y}_{(a,b)}\right\|_{L^3(\Omega)}\leq \left\|\hat{y}_{\left(a,b\right)}\right\|_{L^3(\Omega)}\leq K.
\end{equation*}

We now concentrate on the adjoint equation. By \cite[Theorem 2.4.2.5 page 124]{GNE} applied to \cref{upper_adjoint_boundary_elliptic_1_constrained}, we have $\hat{q}_{\left(a,b\right)}\in W^{2,3}(\Omega)$, with estimate
\begin{equation*}
\left\|\hat{q}_{\left(a,b\right)}\right\|_{W^{2,3}(\Omega)}\leq K\left\|\overline{y}_{(a,b)}-z\right\|_{L^3(\Omega)}\leq K\left[\left\|\overline{y}_{(a,b)}\right\|_{L^3(\Omega)}+\|z\|_{L^{\infty}(\Omega)}\right]\leq K.
\end{equation*}
By the trace Theorem (\cite[Theorem 1.5.1.3 page 38]{GNE}) applied to $\nabla \hat{q}_{\left(a,b\right)}$,
\begin{equation*}
\left\|\frac{\partial \hat{q}_{\left(a,b\right)}}{\partial n}\right\|_{L^{4}(\partial \Omega)}\leq K \left\|\hat{q}_{\left(a,b\right)}\right\|_{W^{2,3}(\Omega)}\leq K.
\end{equation*}
By \cref{comparison_adjoint}, we have then
\begin{equation}\label{est_L4_contr}
\left\|\frac{\partial \overline{q}_{(a,b)}}{\partial n}\right\|_{L^{4}(\partial \Omega)}\leq \left\|\frac{\partial \hat{q}_{\left(a,b\right)}}{\partial n}\right\|_{L^{4}(\partial \Omega)}\leq K,
\end{equation}
whence
\begin{equation*}
\left\|\overline{u}_{\left(a,b\right)}\right\|_{L^{4}(\partial \Omega)}=\left\|\mathbb{P}_{\left[a,b\right]}\left(\frac{\partial \overline{q}_{(a,b)}}{\partial n}\right)\right\|_{L^{4}(\partial \Omega)}\leq \left\|\frac{\partial \overline{q}_{(a,b)}}{\partial n}\right\|_{L^{4}(\partial \Omega)}\leq K.
\end{equation*}
By using the definition of solution by transposition for \cref{upper_semilinear_boundary_elliptic_1_gdomain_constrained} and the above estimate, we get
\begin{equation*}
\left\|\hat{y}_{\left(a,b\right)}\right\|_{L^{4}(\Omega)}\leq K \left\|\overline{u}_{\left(a,b\right)}\right\|_{L^{4}(\partial \Omega)}\leq K,
\end{equation*}
whence, by \cref{comparison_state}
\begin{equation*}
\left\|\overline{y}_{\left(a,b\right)}\right\|_{L^{4}(\Omega)}\leq \left\|\hat{y}_{\left(a,b\right)}\right\|_{L^{4}(\Omega)}\leq K.
\end{equation*}

In conclusion, we employ the elliptic regularity (\cite[Theorem 2.4.2.5 page 124]{GNE}) in \cref{upper_adjoint_boundary_elliptic_1_constrained}, to get
\begin{equation*}
\left\|\hat{q}_{\left(a,b\right)}\right\|_{W^{2,4}(\Omega)}\leq K\left\|\overline{y}_{\left(a,b\right)}-z\right\|_{L^4(\Omega)}\leq K,
\end{equation*}
whence, by Sobolev embeddings in space dimension $n=1,2,3$,
\begin{equation*}
\left\|\hat{q}_{\left(a,b\right)}\right\|_{C^{1}\left(\overline{\Omega}\right)}\leq \left\|\hat{q}_{\left(a,b\right)}\right\|_{W^{2,4}(\Omega)}\leq K\left\|y-z\right\|_{L^4(\Omega)}\leq K.
\end{equation*}
Now, \cref{comparison_adjoint} yields
\begin{equation}\label{proposition_eolinf_eq636}
\left\|\frac{\partial\overline{q}_{\left(a,b\right)}}{\partial n}\right\|_{C^{0}\left(\partial \Omega\right)}\leq \left\|\frac{\partial \hat{q}_{\left(a,b\right)}}{\partial n}\right\|_{C^{0}\left(\partial \Omega\right)}\leq \left\|\hat{q}_{\left(a,b\right)}\right\|_{C^{1}\left(\overline{\Omega}\right)}\leq K,
\end{equation}
which in turn implies
\begin{equation*}
\left\|\overline{u}_{\left(a,b\right)}\right\|_{L^{\infty}(\partial \Omega)}=\left\|\mathbb{P}_{\left[a,b\right]}\left(\frac{\partial \overline{q}_{(a,b)}}{\partial n}\right)\right\|_{L^{\infty}(\partial \Omega)}\leq \left\|\frac{\partial \overline{q}_{(a,b)}}{\partial n}\right\|_{L^{\infty}(\partial \Omega)}\leq K,
\end{equation*}
where the last inequality follows from \cref{proposition_eolinf_eq636}. We have then, the estimate
\begin{equation}\label{proposition_eolinf_eq639}
\left\|\overline{u}_{\left(a,b\right)}\right\|_{L^{\infty}(\partial \Omega)}\leq K,\hspace{0.3 cm} \forall \ a, \ b \in \mathbb{R}, \ \mbox{with} \ a<0<b,
\end{equation}
the constant $K=K(\Omega,f,\beta,z)$ being independent of $\left(a,b\right)$. This finishes this step.\\
\textit{Step 3} \  \textbf{Conclusion}\\
Let $K$ be the upper bound appearing in \cref{proposition_eolinf_eq639}. We want to show that, for any control $u\in L^{\infty}(\partial \Omega)$, with $\left\|u\right\|_{L^{\infty}(\partial \Omega)}>K$, the value of the functional
\begin{equation*}
J(u)>\inf_{B^{L^{\infty}}(0,K)}J,
\end{equation*}
Indeed, for any control $u\in L^{\infty}(\partial \Omega)$, with $\left\|u\right\|_{L^{\infty}(\partial \Omega)}>K$, set $b\coloneqq \left\|u\right\|_{L^{\infty}(\partial \Omega)}+1$, $a\coloneqq -b$ and set accordingly the control set
\begin{equation*}
\mathbb{K}\coloneqq \left\{u \in L^{\infty}\left(\partial \Omega\right) \ | \ a\leq u\leq b, \ \mbox{a.e.} \ \partial \Omega \right\}.
\end{equation*}
By definition of $a$ and $b$, the control $u\in \mathbb{K}$ and, by \cref{proposition_eolinf_eq639}
\begin{equation}\label{proposition_eolinf_eq640}
J(u)>\inf_{B^{L^{\infty}}(0,K)}J,
\end{equation}
as desired. Now, by step 1, there exists $\overline{u}\in \overline{B^{L^{\infty}}(0,K)}$ minimizing $J$ in $\overline{B^{L^{\infty}}(0,K)}$. By \cref{proposition_eolinf_eq640}, such control $\overline{u}$ is in fact a global minimizer for $J$ in $L^{\infty}(\partial \Omega)$, thus concluding the proof.
\end{proof}

\begin{proof}[Proof of \Cref{lemma3_boundary}.]
\textit{Step 1} \ \textbf{Proof of 1.}\\
Arbitrarily fix $z\in L^{\infty}(B(0,R))$. The existence of a minimizer $u_{z}$ is a consequence of the direct methods in the Calculus of Variations. Moreover, by \cref{def_functional_control_target_PDE_boundary}, definition of minimizer and $G(0)=0$:
\begin{multline*}
\frac12R^{n-1}n\alpha(n)|u_{z}|^2\leq I(u_{z},z)+\frac{\beta}{2}\int_{B(0,R)}|z|^2dx\\
\leq I(0,z)+\frac{\beta}{2}  \int_{B(0,R)}|z|^2dx=\frac{\beta}{2} \int_{B(0,R)}|z|^2dx,
\end{multline*}
which yields $\frac12|u_{z}|^2\leq \frac{\beta}{2R^{n-1}n\alpha(n)} \int_{B(0,R)}|z|^2dx$, as required.\\
\textit{Step 2} \ \textbf{Proof of 2.}\\
Arbitrarily fix $M\in\mathbb{R}^+$. For any pair of targets $(z_1,z_2)\in L^{\infty}(B(0,R))^2$ such that:
\begin{equation*}
\|z_1\|_{L^{2}}\leq M \hspace{1 cm}\mbox{and}\hspace{1 cm}\|z_2\|_{L^2}\leq M.
\end{equation*}
For each control $u\in C$ such that $|u|\leq \sqrt{\frac{\beta}{R^{n-1}n\alpha(n)}} M$, we have:
\begin{equation*}
I(u,z_2)-I(u_{z_1},z_1)=I(u,z_2)-I(u,z_1)+I(u,z_1)-I(u_{z_1},z_1)
\end{equation*}
\begin{equation*}
\geq -|I(u,z_2)-I(u,z_1)|+0=-\beta\left|\int_{B(0,R)} G(u)(z_1-z_2)dx\right|
\end{equation*}
\begin{equation*}
\geq -K\|z_2-z_1\|_{L^{\infty}},
\end{equation*}
where the last inequality is justified by $|u|\leq \sqrt{\frac{\beta}{R^{n-1}n\alpha(n)}} M$ and the continuity of the control-to-state map $G$.

Then, one has that for any $\varepsilon >0$, there exists $\delta_{\varepsilon}>0$ such that:
\begin{equation*}
I(u,z_2)-I(u_{z_1},z_1)>-\varepsilon,
\end{equation*}
whenever $\|z_2-z_1\|_{L^{\infty}}<\delta_{\varepsilon}$.

Now, by the first step, any minimizer $u_{z_2}$ for $I(\cdot,z_2)$ verifies\\
$|u_{z_2}|\leq\sqrt{\frac{\beta}{R^{n-1}n\alpha(n)}} \|z_2\|_{L^2}\leq\sqrt{\frac{\beta}{R^{n-1}n\alpha(n)}} M$. Then, we have proved that:
\begin{equation*}
\inf_{C}[I(\cdot,z_2)]-\inf_C [I(\cdot,z_1)]=I(u_{z_2},z_2)-I(u_{z_1},z_1)>-\varepsilon.
\end{equation*}
Exchanging the role of $z_1$ and $z_2$, one can get:
\begin{equation*}
\inf_{C}[I(\cdot,z_1)]-\inf_C[I(\cdot, z_2)]>-\varepsilon.
\end{equation*}
This yields the continuity of $h$.	
\end{proof}

\begin{proof}[Proof of \Cref{lemma4_boundary}.]
If $h_1(z^0)=h_2(z^0)$, we take $\tilde{z}\coloneqq z^0$, thus concluding. Let us now suppose $h_1(z^0)\neq h_2(z^0)$.

We start by considering the case $h_1(z^0)<h_2(z^0)$.\\
\textit{Step 1} \ \textbf{Proof of the existence of $\mu_0\geq 0$ such that:
	\begin{itemize}
		\item $\forall \mu \in [0,\mu_0]$, $h_2(z^0+\mu)<0$;
		\item $
		h_1\left(z^0+\mu_0 
		\right)=0.
		$
\end{itemize}}
First of all, we observe that for any $\mu\geq 0$, $h_2(z^0+\mu
)<0$. Indeed, since $h_2(z^0)<0$, there exists $u_2>0$ such that $I(u_2,z^0)<0$. Then,
\begin{multline*}
h_2(z^0+\mu)\leq I(u_2,z^0+\mu)\\
=\frac{R^{n-1}n\alpha(n)}{2}|u_2|^2+\frac{\beta}{2}\int_{B(0,R)} |G(u_2)|^2dx- \beta\int_{B(0,R)}(z^0+\mu)G(u_2)dx\\
=I(u_2,z^0)-\mu \beta \int_{B(0,R)}G(u_2)dx\leq I(u_2,z^0)<0,
\end{multline*}
where we have used that $G(u_2)\geq 0$ a.e. in $B(0,R)$.

We prove now that $h_1\left(z^0+\mu_0 
\right)=0$, for $\mu_0=\|z^0\|_{L^{\infty}}$. Indeed, for any $v\leq 0$:
\begin{equation*}
I(v,z^0+\mu_0
)=\frac{R^{n-1}n\alpha(n)}{2}|v|^2+\frac{\beta}{2}\int_{B(0,R)} |G(v)|^2dx-\beta\int_{B(0,R)}(z^0+\mu_0) G(v)dx\geq 0,
\end{equation*}	
since $z^0+\mu_0\geq 0$ and $G(v)\leq 0$ a.e. in $B(0,R)$. This finishes the first step.\\
\textit{Step 2} \ \textbf{Conclusion}\\
Set:
\begin{equation*}
g:[0,\mu_0]\longrightarrow \mathbb{R}
\end{equation*}
\begin{equation*}
\mu\longmapsto h_2(z^0+\mu )-h_1(z^0+\mu ).
\end{equation*}
Since $h_1(z^0)<h_2(z^0)$, $g(0)>0$ and by Step 1 $g(\mu_0)<0$. Then, by continuity, there exists $\mu_1\in (0,\mu_0)$ such that $g(\mu_1)=0$. Hence,
\begin{equation*}
\tilde{z} \coloneqq z^0+\mu_1
\end{equation*}
is the desired target. Indeed, by definition of $g$ and $\mu_1$, $h_1(\tilde{z})=h_2(\tilde{z})$. Furthermore, since $\mu_1\in (0,\mu_0)$, by Step 1, $h_2(\tilde{z})<0$. This concludes the proof for the case $h_1(z^0)<h_2(z^0)$. The proof for the remaining case $h_1(z^0)>h_2(z^0)$ is similar.
\end{proof}

\section{Preliminaries for internal control}
\label{sec:appendix.Preliminaries for internal control}

We consider now study the state equation \cref{semilinear_internal_elliptic_1} on a general domain. Let $\Omega$ be an bounded open subset of $\mathbb{R}^n$, with $\partial \Omega \in C^2$ and $n=1,2,3$. The nonlinearity $f\in C^1\left(\mathbb{R}\right)\cap C^2\left(\mathbb{R}\setminus \left\{0\right\}\right)$ is strictly increasing and $f(0)=0$. The control acts in $\omega$, nonempty open subset of $\Omega$.

We introduce the concept of solution, following \cite[Theorem 4.7, page 29]{boccardo2013elliptic}.

\begin{definition}\label{def_sol_semilinear_internal_elliptic}
Let $u\in L^{2}(\omega)$. Then, $y\in H^1_0(\Omega)$ is said to be a solution to
\begin{equation}\label{semilinear_internal_elliptic_1_generaldomain}
\begin{dcases}
-\Delta y+f(y)=u\chi_{\omega}\hspace{2.8 cm} & \mbox{in} \hspace{0.10 cm}\Omega\\
y=0  & \mbox{on}\hspace{0.10 cm} \partial \Omega.
\end{dcases}
\end{equation}
if $f(y)\in L^1(\Omega)$ and for any test function $\varphi\in H^1_0(\Omega)\cap L^{\infty}(\Omega)$, we have
\begin{equation*}
\int_{\Omega}\left[\nabla y\cdot \nabla \varphi+f(y)\varphi\right]dx=\int_{\omega}u\varphi dx.
\end{equation*}
\end{definition}

The well-posedness of \cref{semilinear_internal_elliptic_1_generaldomain} follows from \cite[Theorem 4.7, page 29]{boccardo2013elliptic}.

\begin{lemma}\label{lemma_nonconst_internal}
Let $u\in L^{\infty}(\omega)$ be a control. Let $y$ be the solution to \eqref{semilinear_internal_elliptic_1_generaldomain}, with control $u$. Assume the nonlinearity $f$ is strictly increasing and $y$ is constant in $\Omega\setminus \omega$. Then, $y\equiv 0$ and $u\equiv 0$.
\end{lemma}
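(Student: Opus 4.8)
The plan is to follow the proof of \Cref{lemma_rangesolutionelliptic} for boundary control, the one genuinely new feature being the source term $u\chi_{\omega}$, which forbids testing the equation against functions whose support meets $\omega$. I would therefore localize the argument to the control-free region $\Omega\setminus\overline{\omega}$, which is a nonempty open set since $\omega$ is a proper open subset of $\Omega$ (in the application $\overline{B(0,r)}\subset\subset B(0,R)$).

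Write $y\equiv c$ on $\Omega\setminus\omega$, with $c\in\mathbb{R}$ the constant supplied by the hypothesis. The first step is to identify $c$. Testing \Cref{def_sol_semilinear_internal_elliptic} against an arbitrary $\varphi\in C^{\infty}_c(\Omega\setminus\overline{\omega})$ annihilates the right-hand side, since $\mbox{supp}(\varphi)\cap\omega=\emptyset$, leaving
\begin{equation*}
\int_{\Omega\setminus\overline{\omega}}\left[\nabla y\cdot\nabla\varphi+f(y)\varphi\right]dx=0.
\end{equation*}
As $y$ is constant on this set, $\nabla y=0$ a.e., so the identity reduces to $f(c)\int_{\Omega\setminus\overline{\omega}}\varphi\,dx=0$ for every admissible $\varphi$, forcing $f(c)=0$. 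Since $f$ is strictly increasing and $f(0)=0$, this gives $c=0$. (Alternatively, $c=0$ follows at once from the continuity of $y$ up to $\partial\Omega$ and the boundary condition $y=0$ on $\partial\Omega\subset\overline{\Omega\setminus\omega}$.) Hence $y\equiv 0$ on $\Omega\setminus\omega$.

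The remaining task is to pass from $\Omega\setminus\omega$ to all of $\Omega$ and to the control. Because $y$ vanishes identically on the open set $\Omega\setminus\overline{\omega}$, both $y$ and $\nabla y$ vanish there, and I would feed $y\equiv 0$ back into \eqref{semilinear_internal_elliptic_1_generaldomain} to read off $u\chi_{\omega}=-\Delta y+f(y)=0$. I expect this last passage to be the main obstacle: vanishing of the state on the observation region $\Omega\setminus\omega$ only yields the Cauchy data $y=\frac{\partial y}{\partial n}=0$ on $\partial\omega$ from inside (after matching through the $C^1$ regularity of $y$ granted by elliptic regularity), and for a general source $u$ these two conditions leave the interior problem in $\omega$ underdetermined. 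The conclusion $u\equiv 0$ therefore requires either reading constancy of $y$ as holding throughout $\Omega$ (in which case $y\in H^1_0(\Omega)$ being constant forces $y\equiv 0$ and hence $u=f(0)=0$ immediately), or exploiting the radial structure of the admissible controls together with the uniqueness statement of \cite[Theorem 4.7, page 29]{boccardo2013elliptic}, which is where I would concentrate the care.
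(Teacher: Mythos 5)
Your Step 1 is, essentially word for word, the whole of the paper's proof: the paper tests the weak formulation of \Cref{def_sol_semilinear_internal_elliptic} against $\varphi\in C^{\infty}_c(\Omega\setminus\omega)$, uses that the right-hand side then vanishes and that $\nabla y=0$ where $y$ is constant, deduces $f(c)=0$, and concludes $c=0$ from strict monotonicity and $f(0)=0$. Your replacement of $\Omega\setminus\omega$ by the open set $\Omega\setminus\overline{\omega}$, and your alternative derivation of $c=0$ from the trace condition, are harmless variants.

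The obstacle you flag afterwards is not a failure of your attempt: it is a genuine gap in the paper. Having obtained $c=0$, the paper's proof ends with the words ``whence $y\equiv 0$ and $u\equiv 0$'', with no supporting argument, and none can be supplied, because with the hypothesis as written (constancy only on $\Omega\setminus\omega$) the statement is false. Indeed, take $\psi\in C^{\infty}_c(\omega)$, $\psi\not\equiv 0$, extended by zero to $\Omega$, and set $u\coloneqq \left(-\Delta\psi+f(\psi)\right)\restriction_{\omega}\in L^{\infty}(\omega)$. Since $f(0)=0$, for every $\varphi\in H^1_0(\Omega)\cap L^{\infty}(\Omega)$ one has
\begin{equation*}
\int_{\Omega}\left[\nabla\psi\cdot\nabla\varphi+f(\psi)\varphi\right]dx=\int_{\omega}\left[-\Delta\psi+f(\psi)\right]\varphi\,dx=\int_{\omega}u\varphi\,dx,
\end{equation*}
so $\psi$ is the unique solution of \cref{semilinear_internal_elliptic_1_generaldomain} with control $u$ (uniqueness as in \cite[Theorem 4.7, page 29]{boccardo2013elliptic}); it is constant (equal to zero) on $\Omega\setminus\omega$, yet $y=\psi\not\equiv 0$ and $u\not\equiv 0$, since $u\equiv 0$ would force $\psi\equiv 0$ by uniqueness. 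This is exactly the Cauchy-data obstruction you describe: vanishing of $y$ outside $\omega$ does not propagate into $\omega$ once a source is allowed there. Your proposed repair --- reading the hypothesis as ``$y$ is constant in $\Omega$'' --- is the right one; it is also all that the paper actually uses (the itemized list in \cref{subsec:nouniq.internal} invokes the lemma only for constant solutions on the whole of $B(0,R)$, to get that $G(u)=0$ if and only if $u=0$), and under it your one-line argument closes the proof: a constant in $H^1_0(\Omega)$ must vanish (equivalently, $f(c)=0$ forces $c=0$), and then testing against all $\varphi\in C^{\infty}_c(\Omega)$ gives $\int_{\omega}u\varphi\,dx=0$, i.e. $u\equiv 0$.
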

\begin{proof}[Proof of \Cref{lemma_nonconst_internal}]
Suppose there exists $c\in\mathbb{R}$, such that $y(x)=c$, for any $x\in \Omega\setminus\omega$. Then, by \Cref{def_sol_semilinear_boundary_elliptic}, for any for any test function $\varphi\in C^{\infty}_c(\Omega\setminus \omega)$, we have
\begin{equation*}
\int_{\Omega}f(c)\varphi dx=\int_{\Omega}\left[\nabla y\cdot \nabla \varphi+f(y)\varphi\right]dx=\int_{\omega}u\varphi dx=0,
\end{equation*}
where $C^{\infty}_c(\Omega\setminus \omega)$ denoted the class of infinitely many times differentiable functions, with compact support in $\Omega\setminus \omega$. The arbitrariness of $\varphi\in C^{\infty}_c(\Omega\setminus \omega)$ leads to $f(c)=0$. Now, $f(0)=0$ and $f$ is strictly increasing. Hence $f(c)=0$ if and only if $c=0$, whence $y\equiv 0$ and $u\equiv 0$.
\end{proof}

\begin{lemma}\label{lemma_rot_int}
In the notation of \cref{semilinear_internal_elliptic_1_generaldomain}, consider rotational invariant domains $\Omega$ and $\omega$. Let $u\in L^{\infty}(\omega)$ be a control and let $y$ be the solution to \eqref{semilinear_boundary_elliptic_1_gdomain_wp}, with control $u$. Let $M$ be an orthogonal matrix. Set $u_M(x)\coloneqq u(M(x))$ and $y_M(x)\coloneqq y(M(x))$. Then, $y_M$ is a solution to
\begin{equation}\label{semilinear_internal_elliptic_rot_generaldomain}
\begin{dcases}
-\Delta y_M+f\left(y_M\right)=u_M\chi_{\omega}\hspace{2.8 cm} & \mbox{in} \hspace{0.10 cm}\Omega\\
y_M=0  & \mbox{on}\hspace{0.10 cm} \partial \Omega
\end{dcases}
\end{equation}
in the sense of \Cref{def_sol_semilinear_internal_elliptic}. If in addition $u_M=u$ for any orthogonal matrix $M$, then $y_M=y$, namely $y$ is a radial solution.
\end{lemma}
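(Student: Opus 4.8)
The plan is to mimic the proof of \Cref{lemma_rot}, adapting it to the weak formulation of \Cref{def_sol_semilinear_internal_elliptic}, in which the bilinear form is the Dirichlet integral $\int_\Omega \nabla y\cdot\nabla\varphi\,dx$ rather than the distributional Laplacian paired against $C^2$ test functions. The algebraic fact driving the argument is the invariance of the Dirichlet integrand under an orthogonal change of variables, which here plays the role that the rotational invariance of the Laplacian (\Cref{lemma_rotinv_laplacian}) played in the boundary case.

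First I would fix an arbitrary test function $\varphi\in H^1_0(\Omega)\cap L^\infty(\Omega)$ and set $\varphi_M(\tilde x):=\varphi(M^{-1}\tilde x)$. Since $\Omega$ is rotationally invariant and $M$ is an isometry, $\varphi_M$ is again an admissible test function in $H^1_0(\Omega)\cap L^\infty(\Omega)$; moreover $f(y_M)\in L^1(\Omega)$ follows from $f(y)\in L^1(\Omega)$ together with $|\det M|=1$, so that $y_M$ is an admissible candidate solution. I would then verify the weak identity defining $y_M$ as a solution of \cref{semilinear_internal_elliptic_rot_generaldomain} by substituting $\tilde x=Mx$ in each term of $\int_\Omega[\nabla y_M\cdot\nabla\varphi+f(y_M)\varphi]\,dx$. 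The chain rule gives $\nabla_x y_M(x)=M^{\top}(\nabla y)(Mx)$ and $\nabla_x\varphi(x)=M^{\top}(\nabla\varphi_M)(Mx)$, so that, using $MM^{\top}=I$, the integrand $\nabla y_M\cdot\nabla\varphi$ transforms into $(\nabla y)(\tilde x)\cdot(\nabla\varphi_M)(\tilde x)$; the zeroth-order term becomes $f(y(\tilde x))\varphi_M(\tilde x)$; and, crucially using that $\omega$ is rotationally invariant so that $\tilde x=Mx$ maps $\omega$ onto itself, the control term $\int_\omega u_M\varphi\,dx$ becomes $\int_\omega u(\tilde x)\varphi_M(\tilde x)\,d\tilde x$. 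Collecting these identities reduces the desired equation to the weak formulation satisfied by $y$ tested against $\varphi_M$, which holds by hypothesis. Hence $y_M$ solves \cref{semilinear_internal_elliptic_rot_generaldomain} with control $u_M$.

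For the final assertion, if $u_M=u$ for every orthogonal matrix $M$, then $y_M$ and $y$ both solve \cref{semilinear_internal_elliptic_1_generaldomain} with the same control $u$; by the uniqueness of the solution (a consequence of the strict monotonicity of $f$, as in the well-posedness cited after \Cref{def_sol_semilinear_internal_elliptic}), it follows that $y_M=y$ for all such $M$, i.e. $y$ is radial.

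The main obstacle, exactly as in the boundary case, is the bookkeeping of the orthogonal change of variables, and in particular confirming the invariance of the Dirichlet integrand $\nabla y\cdot\nabla\varphi$ under $x\mapsto Mx$; the rest is routine. A secondary point to keep track of is that the rotational invariance of the control region $\omega$ (and not merely of $\Omega$) is what makes the source term transform correctly, and this is the one place where the internal-control geometry genuinely differs from the boundary-control setting of \Cref{lemma_rot}.
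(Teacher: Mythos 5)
Your proposal is correct and follows essentially the same route as the paper's proof: the change of variables $\tilde{x}=Mx$ with $\left|\det M\right|=1$, the orthogonality identity that makes the Dirichlet integrand invariant, reduction to the weak formulation of $y$ tested against the rotated test function $\varphi\circ M^{-1}$, and uniqueness of solutions to conclude radiality. Your explicit checks that $\varphi_M$ is an admissible test function and that $f(y_M)\in L^1(\Omega)$ are small refinements the paper leaves implicit, but they do not change the argument.
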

\begin{proof}[Proof of \Cref{lemma_rot_int}]
As per \Cref{def_sol_semilinear_internal_elliptic}, let us check that for any test function $\varphi\in H^1_0(\Omega)\cap L^{\infty}(\Omega)$, we have
\begin{equation}\label{lemma_rot_int_eq3}
\int_{\Omega}\left[\nabla y_M\cdot \nabla \varphi+f\left(y_M\right)\varphi\right]dx=\int_{\omega}u_M\varphi dx.
\end{equation}
Set $\tilde{x}\coloneqq Mx$. Since the matrix $M$ is orthogonal, $\left|\det(M)\right|=1$, whence by Change of Variables Theorem and definition of $y_M$
\begin{eqnarray}\label{lemma_rot_int_eq6}
&\;&\int_{\Omega}\left[\nabla y_M\cdot \nabla \varphi+f\left(y_M\right)\varphi\right]dx\nonumber\\
&=&\int_{\Omega}\left[\left(\nabla_{\tilde{x}} y(Mx)M\right)\cdot \nabla \varphi+f\left(y_M\right)\varphi\right]dx\nonumber\\
&=&\int_{\Omega}\left[\nabla_{\tilde{x}} y(Mx)\cdot \left(\nabla_x \varphi(x) M^{-1}\right)+f\left(y_M\right)\varphi\right]dx\nonumber\\
&=&\int_{\Omega}\left[\nabla_{\tilde{x}} y(\tilde{x})\cdot \nabla_{\tilde{x}} \varphi\left(M^{-1}\tilde{x}\right)+f\left(y_M\left(M^{-1}\tilde{x}\right)\right)\varphi\left(M^{-1}\tilde{x}\right)\right]d\tilde{x}\nonumber\\
&=&\int_{\Omega}\left[\nabla_{\tilde{x}} y\left(\tilde{x}\right)\cdot \nabla_{\tilde{x}} \varphi\left(M^{-1}\tilde{x}\right)+f\left(y\left(\tilde{x}\right)\right)\varphi\left(M^{-1}\tilde{x}\right)\right]d\tilde{x}\nonumber\\
&=&\int_{\omega}u\left(\tilde{x}\right)\varphi\left(M^{-1}\tilde{x}\right) d\tilde{x},\\
\end{eqnarray}
where in the last inequality we have used that $y$ is a solution to \eqref{semilinear_boundary_elliptic_1_gdomain_wp}, with control $u$. Now, we change back variable $x\coloneqq M^{-1}\tilde{x}$ in \eqref{lemma_rot_int_eq6}, getting
\begin{equation}
\int_{\omega}u\left(\tilde{x}\right)\varphi\left(M^{-1}\tilde{x}\right) d\tilde{x}=\int_{\omega}u\left(Mx\right)\varphi\left(x\right) dx=\int_{\omega}u_M\left(x\right)\varphi\left(x\right) dx,
\end{equation}
whence \cref{lemma_rot_int_eq3} follows. Therefore, if the control is radial, for any orthogonal matrix $M$, $y_M$ is the solution to the same boundary value problem. The uniqueness for \cref{semilinear_internal_elliptic_1_generaldomain} yields $y_M=y$.
\end{proof}

\bigskip
\footnotesize
\noindent\textit{Acknowledgments.}
This project has received funding from the European Research Council (ERC) under the European Union’s Horizon 2020 research and innovation programme (grant agreement No 694126-DYCON).

We acknowledge professor Enrique Zuazua for his helpful remarks on the manuscript. We thank professor Martin Gugat for his interesting questions. We gratefully acknowledge the referees for their interesting comments.

\end{document}